\newtheorem{lemma}{{\sc Lemma}}[section]
\newtheorem{corollary}[lemma]{{\sc Corollary}}
\newtheorem{proposition}[lemma]{{\sc Proposition}}
\newtheorem{theorem}[lemma]{{\sc Theorem}}
\theoremstyle{definition}
\newtheorem{remark}[lemma]{{\sc Remark}}
\newtheorem{conjecture}[lemma]{{\sc Conjecture}}
\newtheorem{assumption}[lemma]{{\sc Assumption}}
\numberwithin{equation}{section}
\def\Ga{{\mathfrak{a}}}
\def\Gb{{\mathfrak{b}}}
\def\Gg{{\mathfrak{g}}}
\def\Gh{{\mathfrak{h}}}
\def\Gl{{\mathfrak{l}}}
\def\Gm{{\mathfrak{m}}}
\def\Gn{{\mathfrak{n}}}
\def\Gp{{\mathfrak{p}}}
\def\GS{{\mathfrak{S}}}
\def\BA{{\mathbf{A}}}
\def\BC{{\mathbf{C}}}
\def\BF{{\mathbf{F}}}
\def\BQ{{\mathbf{Q}}}
\def\BR{{\mathbf{R}}}
\def\BZ{{\mathbf{Z}}}
\def\Ba{{\mathbf{a}}}
\def\CO{{\mathcal O}}
\def\CI{{\mathcal I}}
\def\CR{{\mathcal R}}
\def\CS{{\mathcal S}}
\def\CX{{\mathcal X}}
\def\ad{{\mathop{\rm ad}\nolimits}}
\def\Comod{\mathop{\rm Comod}\nolimits}
\def\deru{\partial}
\def\diag{{\mathop{\rm diag}\nolimits}}
\def\Hom{\mathop{\rm Hom}\nolimits}
\def\id{\mathop{\rm id}\nolimits}
\def\Ind{{\mathop{\rm Ind}\nolimits}}
\def\inte{{\mathop{\rm int}\nolimits}}
\def\Image{\mathop{\rm Im}\nolimits}
\def\Ker{\mathop{\rm Ker\hskip.5pt}\nolimits}
\def\Mod{\mathop{\rm Mod}\nolimits}
\def\res{{\mathop{\rm res}\nolimits}}
\def\DK{{\rm{DK}}}
\def\txi{\tilde{\xi}}
\begin{document}
\title[an induced module for a quantum group]
{
The Koszul complex and a certain 
induced module for a quantum group}
\author{Toshiyuki TANISAKI}
\subjclass[2020]{Primary: 20G42, Secondary: 17B37}
\begin{abstract}
We give a description of a certain induced module for a quantum group of type $A$.
Together with our previous results this gives 
a proof of Lusztig's conjectural multiplicity formula  for non-restricted modules over the De Concini-Kac type quantized enveloping algebra of type $A_n$ at the $\ell$-th root of unity, where $\ell$ is an odd integer satisfying $(\ell,n+1)=1$ and $\ell> n+1$.
\end{abstract}
\maketitle
\section{Introduction}
\subsection{}
In  our project \cite{T1}, \cite{T2}, \cite{T3}, \cite{T4} 
we pursued  quantum analogues of the results of 
Bezrukavnikov-Mirkovi\'{c}-Rumynin \cite{BMR1}, \cite{BMR2} and  Bezrukavnikov-Mirkovi\'{c} \cite{BM} 
concerning 
$D$-modules on the flag manifolds and representations of the Lie algebras 
in positive characteristics.
More precisely  we  replaced
the flag manifolds in positive characteristics originally employed in \cite{BMR1}, \cite{BMR2}, \cite{BM} 
with the quantized flag manifolds at roots of unity, and tried to deduce results on representations of the De Concini-Kac type quantized enveloping algebras at roots of unity.
So far the only point we are unable to establish  in a sufficient generality  is the description of the cohomology groups of the sheaf of rings of universally twisted differential operators on the quantized flag manifold.
If our conjecture on this problem could be settled, then we would obtain a proof of Lusztig's conjectural multiplicity formula \cite{LK} for non-restricted modules over the De Concini-Kac type quantized enveloping algebra at the $\ell$-th root of unity, where $\ell$ is an odd integer satisfying certain reasonable conditions  (see \cite{T4}).
When $\ell$ is a power of a prime, 
we gave a proof of  Lusztig's formula  in \cite{T4} 
using a weak form of our conjecture proved in \cite{T3}.
In this paper we show the conjecture in the case of type $A_n$ without assuming that $\ell$ is a prime power.
This gives a proof of Lusztig's formula  
in the case of type $A_n$ 
for any odd $\ell$ satisfying $(\ell,n+1)=1$ and $\ell> n+1$.

\subsection{}
\label{subsec:1.2}
Let $G$ be a connected reductive algebraic group over the complex number field $\BC$ with simply connected derived group, and let $B$ be a Borel subgroup of $G$.
For $\zeta\in\BC^\times$ we denote by $O_{\BC,\zeta}(G)$ and 
$O_{\BC,\zeta}(B)$ the quantized coordinate algebras of 
$G$ and $B$ at $q=\zeta$ respectively.
They are Hopf algebras over $\BC$.
Denote by $\Comod(O_{\BC,\zeta}(G))$
(resp.\ $\Comod(O_{\BC,\zeta}(B))$)
the category of right 
$O_{\BC,\zeta}(G)$-comodules
(resp.\ 
$O_{\BC,\zeta}(B)$-comodules).
We have 
the induction functor
\[
\Ind:
\Comod(O_{\BC,\zeta}(B))
\to
\Comod(O_{\BC,\zeta}(G)),
\]
which is left exact.
Then our conjecture on
 the cohomology groups of the sheaf of rings of universally twisted differential operators on the quantized flag manifold follows from the following.
\begin{conjecture}
\label{conj}
Let  $\zeta\in\BC^\times$ be an $\ell$-th root of unity.
We assume that $\ell>1$ is odd and prime to 3 if $G$ contains a component of type $G_2$.
We also assume that $\ell$
is  greater than or equal to the 
Coxeter number of any component of $G$.
Then we have
\[
R^i\Ind(O_{\BC,\zeta}(B)_\ad)
\cong 
\begin{cases}
O_{\BC,\zeta}(G)_\ad\otimes_{O_{\BC,\zeta}(H)^{W\circ}}O_{\BC,\zeta}(H)
\quad&(i=0)
\\
\{0\}&(i>0).
\end{cases}
\]
\end{conjecture}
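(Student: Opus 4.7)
The plan is to realize the right-hand side as the cohomology of a Koszul-type complex of $B$-comodules whose individual terms are tractable under $R\Ind$. Classically, the Grothendieck--Springer map $\tilde G = G\times_B B \to G$ satisfies $R\pi_*\CO_{\tilde G}\cong \CO_G\otimes_{\CO(H)^W}\CO(H)$ with vanishing of the higher direct images, and the conjecture is the quantum-at-a-root-of-unity analogue of this fact. The paper's title advertises a Koszul complex as the device that turns this geometric picture into an algebraic computation of $R\Ind$.

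First, I would use the factorization $B=HN^{-}$ and the corresponding algebra decomposition $O_{\BC,\zeta}(B)\cong O_{\BC,\zeta}(H)\otimes O_{\BC,\zeta}(N^{-})$ to filter $O_{\BC,\zeta}(B)_{\ad}$ by degree in the unipotent factor. Since $O_{\BC,\zeta}(N^{-})$ is a quantum polynomial algebra on the negative roots, its augmentation module admits a Koszul resolution whose terms, viewed as $H$-comodules, are indexed by subsets of the negative roots and weighted by the sum of the selected roots. Promoting this to a $B$-comodule resolution of $O_{\BC,\zeta}(B)_{\ad}$ produces a complex each of whose terms is (up to the adjoint twist) of the form $O_{\BC,\zeta}(H)\otimes V$ for a one-dimensional $B$-comodule $V$.

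Second, I would compute $R\Ind$ of each term using quantum Borel--Weil--Bott on the quantized flag manifold $G/B$. Under the Coxeter-number hypothesis (and the coprimality with $3$ when a $G_2$ factor is present), the weights $\sum_{\alpha\in S}\alpha$ appearing in the Koszul complex lie in the regime where $R^{\bullet}\Ind$ of a one-dimensional $B$-comodule is concentrated in a single cohomological degree, which is the sharp range in which quantum Borel--Weil--Bott is known to hold cleanly. Feeding the results back into the spectral sequence of the Koszul complex produces a bicomplex on the $G$-side. Identifying the surviving degree-zero contribution with $O_{\BC,\zeta}(G)_{\ad}\otimes_{O_{\BC,\zeta}(H)^{W\circ}}O_{\BC,\zeta}(H)$ then amounts to invoking the quantum Harish--Chandra homomorphism, which embeds $O_{\BC,\zeta}(H)^{W\circ}$ into the adjoint-invariant part of $O_{\BC,\zeta}(G)$ and makes the tensor product well defined.

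The main obstacle will be the simultaneous control of the higher-cohomology vanishing $R^i\Ind=0$ for $i>0$ and the collapse of the spectral sequence onto a single bidegree. Individual Koszul terms can a priori contribute to several cohomological degrees, and showing that the Koszul differentials annihilate the unwanted contributions comes down to matching the combinatorics of the negative roots with the dot-action of $W$ on $O_{\BC,\zeta}(H)$ --- essentially a quantum counterpart of the classical fact that $\pi$ is a small map whose generic fiber is a $W$-torsor. The fine combinatorial control required for this matching is presumably why the conjecture is established here only in type $A_n$ under $\ell\geq n+1$, the case in which the positive-root combinatorics and the Harish--Chandra homomorphism are especially well behaved.
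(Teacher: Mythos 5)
Your sketch contains a structural gap at the very first step: the Koszul complex of the quantum polynomial algebra $O_{\BC,\zeta}(N^-)$ resolves its one-dimensional augmentation module $K$, not $O_{\BC,\zeta}(B)_\ad$, and tensoring that resolution with $O_{\BC,\zeta}(H)$ yields a resolution of $O_{\BC,\zeta}(H)_\ad$, which is still not the object of interest. There is no natural way to ``promote'' it to a finite resolution of $O_{\BC,\zeta}(B)_\ad$ by terms of the form $O_{\BC,\zeta}(H)\otimes V$ with $V$ one-dimensional, because $O_{\BC,\zeta}(B)_\ad$ is not the augmentation module of a finite-dimensional quantum space, and the filtration of $O_{\BC,\zeta}(B)_\ad$ by degree in the unipotent factor is infinite. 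So the complex on which everything else in your plan depends does not exist as described.

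The paper's route is genuinely different. First, by transitivity of induction \eqref{eq:Ind-trans} and an induction on semisimple rank, it reduces the problem to computing $R^i\Ind^{\Gg,\Gb}(O_{K,\zeta}(P_J)_\ad)$ for the single maximal parabolic $P_J$ with $J=\{2,\ldots,n-1\}$ (Proposition \ref{prop:hyp}, using Lemma \ref{lem:IndPL}, Lemma \ref{lem:OPX}, Lemma \ref{lem:adLPX}). Second, the Koszul complex that is actually used is not internal to $B$: it exhibits $O_{K,\zeta}(P_J)$ as a complete-intersection quotient of $O_{K,\zeta}(G)$, namely $O_{K,\zeta}(P_J)\cong O_{K,\zeta}(G)/\sum_{r\geq2}O_{K,\zeta}(G)\xi_{1r}$ (Proposition \ref{prop:OPJ}) where the $\xi_{1r}$ are matrix coefficients of the vector representation, so that $O_{K,\zeta}(G)\Breve{\otimes}(\wedge_\zeta^\bullet V_J)_\ad$ with $V_J=\sum_{r\geq2}K\xi_{1r}$ of dimension $n-1$ (not $\binom{n}{2}$) is a Koszul resolution of $O_{K,\zeta}(P_J)_\ad$ in $\Mod^{O_{K,\zeta}(G)}_\inte(U_{K,\zeta}(\Gp_J))$ (Proposition \ref{prop:resolution}). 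Borel--Weil--Bott and Kostant's lemma then give exactly $n$ surviving terms $R^a\Ind(O_{K,\zeta}(G)\Breve{\otimes}(\wedge_\zeta^a V_J)_\ad)\cong O_{K,\zeta}(G)$ for $a=0,\ldots,n-1$ (Lemma \ref{lem:step}), which are matched against the Steinberg/Pittie basis $\{\chi_{a\epsilon_1}\mid a=0,\ldots,n-1\}$ of $O_{K,\zeta}(H)^{W_J\circ}$ over $O_{K,\zeta}(H)^{W\circ}$. Both of these ideas --- the reduction to a maximal parabolic, and the realization of $O(P_J)$ as a Koszul quotient of $O(G)$ rather than an attempt to resolve $O(B)_\ad$ internally --- are precisely what your proposal lacks.
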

Here, $O_\ad$ for $O=O_{\BC,\zeta}(G)$, $O_{\BC,\zeta}(B)$ is the right $O$-comodule $O$ with respect to the coadjoint action.
Moreover, $O_{\BC,\zeta}(H)$ denotes the quantized coordinate algebra of a maximal torus $H$ of $G$ and 
$O_{\BC,\zeta}(H)^{W\circ}$ denote the ring of invariants with respect to the twisted  action of the Weyl group $W$ on $O_{\BC,\zeta}(H)$.

Probably the condition that $\ell$ is odd (and prime to 3 for $G_2$) is not necessary in Conjecture \ref{conj}
(see Theorem \ref{thm:intro} below).
I do not know whether the condition on the Coxeter number is really necessary or not.
\subsection{}
In this manuscript we give a proof of Conjecture \ref{conj} in the case $G$ is of type $A$.
In fact our proof works in a more general setting.
The base field  can be arbitrary, and the condition for $\zeta$ is a little more relaxed.
Namely, the main result of this paper is the following.
\begin{theorem}
\label{thm:intro}
Assume that the derived group of $G$ is isomorphic to $SL_n(\BC)$.
Let $K$ be a field and let $\zeta\in K^\times$ be either transcendental over the prime field of $K$ or a root of unity
such that the multiplicative order $\ell$ of $\zeta^2$ is greater than or equal to $n$.
Then we have
\begin{equation}
\label{eq:intro}
R^i\Ind(O_{K,\zeta}(B)_\ad)
\cong 
\begin{cases}
O_{K,\zeta}(G)_\ad\otimes_{O_{K,\zeta}(H)^{W\circ}}O_{K,\zeta}(H)\
\quad&(i=0)
\\
\{0\}&(i>0).
\end{cases}
\end{equation}
\end{theorem}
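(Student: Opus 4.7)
The title of the paper points to the main tool: a Koszul complex. The plan is to resolve the $B$-comodule $O_{K,\zeta}(B)_\ad$ by $B$-comodules whose $\Ind$ and higher $R^i\Ind$ are already understood, and then to read off $R^\bullet\Ind(O_{K,\zeta}(B)_\ad)$ from the associated hypercohomology spectral sequence.

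Writing $O_{K,\zeta}(B)\cong O_{K,\zeta}(H)\otimes O_{K,\zeta}(N)$ as $H$-comodules (with $N$ the unipotent radical), I would construct a quantum analogue of the Koszul resolution of $O_{K,\zeta}(B)_\ad$ whose terms have the form $O_{K,\zeta}(H)\otimes \Lambda^{\bullet}\Gn$, suitably twisted by weight shifts coming from the coadjoint action on $O_{K,\zeta}(N)$. The hypothesis $\ell\geqq n$ is used here to ensure that the quantum integers appearing in the Koszul differential are nonzero, so that the complex is exact. One subtlety is that the differential must be $B$-equivariant for the full coadjoint action (not only the torus action), which forces a careful normalisation of the quantum exterior algebra structure.

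The next step is to compute $R^i\Ind$ on each term. Each term is, up to weight twists, of principal-series type and its induction is controlled by a quantum Borel-Weil-Bott vanishing in the range of weights at hand — building on the weaker form of the conjecture established in \cite{T3}. Collecting these termwise computations into the hypercohomology spectral sequence and tracking differentials, one aims to prove that $R^i\Ind(O_{K,\zeta}(B)_\ad)=0$ for $i>0$ and to identify degree zero with the Grothendieck-Springer-like module $O_{K,\zeta}(G)_\ad\otimes_{O_{K,\zeta}(H)^{W\circ}}O_{K,\zeta}(H)$. The latter identification should fall out of the quantum Chevalley-Harish-Chandra restriction theorem, which relates the coadjoint-invariant part of $O_{K,\zeta}(G)$ to $O_{K,\zeta}(H)^{W\circ}$; the shift to the twisted Weyl action $W\circ$ reflects the usual $\rho$-shift appearing in that isomorphism.

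The main obstacle is the spectral sequence analysis. Several terms of the Koszul complex carry weights that cross Weyl walls, so individual terms contribute nontrivially in positive cohomological degree, and one must show that these contributions cancel via higher differentials to leave only the clean answer in degree zero. The type $A$ hypothesis presumably plays its decisive role precisely here: the explicit combinatorics of positive roots $\epsilon_i-\epsilon_j$ for $A_n$, together with standard monomial phenomena and the numerical bound $\ell\geqq n$, should allow an inductive or explicit matching of differentials that is not available in other Dynkin types. A second, lower-level concern is to verify that the Koszul resolution admits a genuine $B$-equivariant lift (not merely a graded one), which is where the transcendental-or-large-order condition on $\zeta$ enters at the level of individual coefficients.
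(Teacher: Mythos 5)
Your proposal correctly identifies the two main tools (a Koszul resolution and a quantum Borel--Weil--Bott theorem), but the architecture you sketch differs from the paper's in ways that matter, and some of your expectations about where the difficulty lies are off the mark.

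The paper does \emph{not} resolve $O_{K,\zeta}(B)_\ad$ directly by a Koszul complex built out of $\wedge^\bullet\Gn$. Instead it proceeds by induction on $n$: the inductive hypothesis (applied to the Levi $L_J$ with $J=\{2,\dots,n-1\}$) handles $R^\bullet\Ind^{\Gp_J,\Gb}$, and the real work is the single step $R^\bullet\Ind^{\Gg,\Gp_J}(O_{K,\zeta}(P_J)_\ad)$. To compute that, the paper realises $O_{K,\zeta}(P_J)$ as the quotient of $O_{K,\zeta}(G)$ by the ideal generated by the $n-1$ matrix coefficients $\xi_{12},\dots,\xi_{1n}$ (Proposition~\ref{prop:OPJ}) and then takes the quantum Koszul resolution on the $(n-1)$-dimensional space $V_J=\sum_{r\geq 2}K\xi_{1r}$, yielding a resolution of length $n-1$ with terms $O_{K,\zeta}(G)\Breve{\otimes}(\wedge_\zeta^a V_J)_\ad$. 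A direct resolution of $O_{K,\zeta}(B)_\ad$ of the kind you describe would have length $\dim\Gn=\binom{n}{2}$ and its terms would carry a much wilder collection of weights; the reduction to the parabolic is essential for the weight combinatorics to be tractable.

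Your prediction that ``individual terms contribute nontrivially in positive cohomological degree, and one must show that these contributions cancel via higher differentials'' is the opposite of what happens. The decisive computation (Lemma~\ref{lem:step} and Lemma~\ref{lem:step2}) shows that for each $a$ the term $O_{K,\zeta}(G)\Breve{\otimes}(\wedge_\zeta^a V_J)_\ad$ has $R^p\Ind$ concentrated in the single degree $p=a$, where it equals $O_{K,\zeta}(G)$. Consequently the hypercohomology spectral sequence degenerates immediately and there is \emph{no} cancellation to organise; each Koszul term lands cleanly in total degree $0$. This is exactly where the explicit type-$A$ combinatorics of $\alpha_{1j}$ and Kostant's description of $\Delta^+\cap(-w^{-1}\Delta^+)$ are used. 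Related to this, you misplace the role of the hypothesis $\ell\geqq n$: it is not needed for the exactness of the quantum Koszul complex (Proposition~\ref{prop:Koszul} is a formal quadratic-algebra statement, valid for any $\zeta$), nor for equivariance of the differential (which is automatic). It enters only so that Andersen's Borel--Weil--Bott theorem for small weights (Proposition~\ref{prop:BBW}) applies to the weights $\lambda_X+\rho$ arising from the $\wedge_\zeta^a V_J$ and from the twisted terms $R^{a-k}\Breve{\otimes}(\wedge_\zeta^k V_J)_\ad$. Finally, for identifying the degree-zero piece, the paper does not simply invoke a restriction theorem abstractly; it constructs an explicit map $\overline{\Upsilon}^{I,J}$ and matches it against the Koszul filtration $F_0\subset\dots\subset F_{n-1}$, using that $O_{K,\zeta}(H)^{W_J\circ}$ is free of rank $n$ over $O_{K,\zeta}(H)^{W\circ}$ with basis $\{\chi_{a\epsilon_1}\}_{0\leq a\leq n-1}$ (a Steinberg--Pittie statement). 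These are the ingredients your sketch would need to be completed into a proof.
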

Here, $O_{K,\zeta}(G)$, etc.\ are obvious analogues of $O_{\BC,\zeta}(G)$, etc.\ over $K$.

Let us describe the outline of the proof.
By induction on the semisimple rank of $G$ we see easily that the proof of  \eqref{eq:intro} is reduced to
showing that 
$R^i\Ind(O_{K,\zeta}(P)_\ad)=0$ for $i\ne0$ 
and that 
the canonical homomorphism
\begin{equation}
\label{eq:intro2}
O_{K,\zeta}(G)_\ad
\otimes_{O_{K,\zeta}(H)^{W\circ}}
O_{K,\zeta}(H)^{W_P\circ}
\to
\Ind(O_{K,\zeta}(P)_\ad)
\end{equation}
is an isomorphism, 
where $P$ is a certain maximal parabolic subgroup of $G$, and $W_P$ is the corresponding subgroup of $W$.
We compute 
$R^i\Ind(O_{K,\zeta}(P)_\ad)$ 
using 
a certain resolution of $O_{K,\zeta}(P)_\ad$ constructed in the following manner.
We can take certain elements $\varphi_1,\dots,\varphi_{n-1}\in O_{K,\zeta}(G)$ 
satisfying
\begin{itemize}
\item[(a)]
$O_{K,\zeta}(G)/
\sum_rO_{K,\zeta}(G)\varphi_r\cong O_{K,\zeta}(P)$,
\item[(b)]
the subalgebra $C$ of $O_{K,\zeta}(G)$ generated by $\varphi_1,\dots,\varphi_{n-1}$ is a quadratic algebra with respect to the generator system $\varphi_1,\dots,\varphi_{n-1}$,
\item[(c)]
the linear span $\bigoplus_r K\varphi_r$ is an
$O_{K,\zeta}(P)$-subcomodule of $O_{K,\zeta}(G)_\ad$.
\end{itemize}
By a general theory of quadratic algebras we obtain a 
Koszul type resolution
\begin{equation}
\label{eq:Koszul}
0\to C{\otimes} L_{n-1}\to\cdots\to C{\otimes}L_{0}\to K\to 0
\end{equation}
of the one-dimensional trivial $C$-module
$
K=C/\sum_rC\varphi_r
$.
Each $L_j$ is naturally an $O_{K,\zeta}(P)$-comodule, and \eqref{eq:Koszul} turns out to be a resolution of the trivial $O_{K,\zeta}(P)$-comodule $K$
with respect to a slightly modified 
coaction of $O_{K,\zeta}(P)$ on the tensor product $C {\otimes} L_j$  (see \cite{HH}, \cite{M2}).
Applying $O_{K,\zeta}(G)\otimes_C(\bullet)$ to \eqref{eq:Koszul} we obtain a resolution 
\begin{equation}
\label{eq:}
0\to O_{K,\zeta}(G)_\ad {\otimes} L_{n-1}\to\cdots\to O_{K,\zeta}(G)_\ad{\otimes} L_{0}\to O_{K,\zeta}(P)_\ad\to 0
\end{equation}
of the $O_{K,\zeta}(P)$-comodule $O_{K,\zeta}(P)_\ad$
with respect to a  modified coaction of $O_{K,\zeta}(P)$.
Since $O_{K,\zeta}(G)_\ad$ is an $O_{K,\zeta}(G)$-comodule, we have
\[
R^i\Ind(O_{K,\zeta}(G)_\ad\otimes L_j)
\cong
O_{K,\zeta}(G)_\ad\otimes R^i\Ind(L_j).
\]
Moreover, by 
the Borel-Weil-Bott type theorem regarding small weights  (see \cite{A}) we have
$R^i\Ind(L_j)=\{0\}$ for $i\ne j$, and 
$R^j\Ind(L_j)=K$.
From this we readily obtain that 
$R^i\Ind(O_{K,\zeta}(P)_\ad)=\{0\}$ for $i\ne0$
and that
$\Ind(O_{K,\zeta}(P)_\ad)$ is endowed with a filtration 
\[
0=M_{-1}\subset M_{0}\subset\cdots
\subset M_{n-1}=\Ind(O_{K,\zeta}(P)_\ad)
\]
such that $M_{r}/M_{r-1}\cong O_{K,\zeta}(G)_{\ad}$ for $r=0,\dots, n-1$.
To show that \eqref{eq:intro2} is an isomorphism we need additional arguments (see Section \ref{sec:main} below for the detail).

\subsection{}
Let $G$ be as in \ref{subsec:1.2} not of type $A$.
Assume that $P$ is a maximal parabolic subgroup of $G$ with commutative unipotent radical $U$.
Let $L$ be the reductive part of $P$.
In this case 
we can take elements $\varphi_1,\dots, \varphi_{\dim U}\in O_{K,\zeta}(G)$ satisfying (a) and (b).
However, 
the linear span $\bigoplus_r K\varphi_r$ is only an
$O_{K,\zeta}(L)$-subcomodule of $O_{K,\zeta}(G)_\ad$ and (c) fails.
This is one of the reasons why our arguments 
are not directly applied to the case $G$ is not of type $A$.

\subsection{}
Let us return to the situation where $G$ is a connected reductive algebraic group over $\BC$ whose derived group is isomorphic to $SL_n(\BC)$.
We assume that $\zeta\in\BC^\times$ is an $\ell$-th root of unity, where $\ell$ is an odd integer satisfying $(\ell,n)=1$ and $\ell> n$.
In this case 
our  Theorem \ref{thm:intro} establishes \cite[Conjecture 6.9]{T3}, 
and this implies \cite[Conjecture 2.6]{T2}
by \cite[Proposition 7.11, Lemma 7.8]{T3}.
Then  by 
\cite[Theorem 2.8]{T2} we obtain a Beilinson-Bernstein type derived equivalence 
between the modules over the De Concini-Kac type quantized enveloping algebras and the $D$-modules on the quantized flag manifold.
By \cite{T4} we obtain Lusztig's conjectural multiplicity formula for the De Concini-Kac type quantized enveloping algebra of type $A$
(see \cite[Section 17.2, 17.3]{LK}, \cite{T4}).

\subsection{}
The contents of this paper are organized as follows.
In Section 2, 3, 4, 5 we recall basic facts on quantum groups and induction functors.
In Section 6 some generality on equivariant modules over the quantized coordinate algebras is presented.
In Section 7  we introduce notation for quantum groups of type $A$.
Our main theorem is  proved in Section 8.

\subsection{}
We use the following notation throughout the paper.
For a ring $R$ we denote by $\Mod(R)$ the category of left $R$-modules.
For a Hopf algebra $U$ we use Sweedler's notation
\[
\Delta^n(u)=
\sum_{(u)}u_{(0)}\otimes\dots\otimes u_{(n)}
\qquad(u\in U)
\]
for the iterated comultiplication $\Delta^n:U\to U^{\otimes n+1}$.

\subsection{}
I would like to thank the referees for valuable comments.

\section{Quantized enveloping algebras}

\subsection{}
Let $\Delta$ be a reduced root system in a vector space $E$ over $\BR$.
For $\alpha\in\Delta$ we denote by $\alpha^\vee\in E^*$ the corresponding coroot.
We choose a set of simple roots 
$\{\alpha_i\mid i\in I\}$, and denote the set of positive roots by $\Delta^+$.
The Weyl group $W$ is the subgroup of $GL(E)$ generated by the simple reflections:
\[
s_i:\lambda\mapsto
\lambda-\langle\lambda,\alpha_i^\vee\rangle\alpha_i
\qquad(i\in I).
\]
We denote by $\ell:W\to\BZ_{\geqq0}$ the length function for the Coxeter system $(W,\{s_i\}_{i\in I})$.
We set
\[
Q=\sum_{\alpha\in\Delta}\BZ\alpha\subset E,
\quad
Q^+=
\sum_{\alpha\in\Delta^+}\BZ_{\geqq0}\alpha\subset E,
\quad
Q^\vee=\sum_{\alpha\in\Delta}\BZ\alpha^\vee
\subset E^*.
\]
For $i, j\in I$ we set 
$a_{ij}=\langle\alpha_j,\alpha_i^\vee\rangle$.

We fix a $W$-invariant symmetric bilinear form 
\begin{equation}
\label{eq:sb}
(\;,\;):E\times E\to \BR
\end{equation}
satisfying 
$(\alpha,\alpha)\in2\BZ_{>0}$ for any $\alpha\in \Delta$ in the following.
Then we have $(Q,Q)\subset\BZ$.
For $i\in I$ we set $d_i=(\alpha_i,\alpha_i)/2$.
\subsection{}
\label{subsec:QE}
For $m\in\BZ_{\geqq0}$ we set 
\[
[m]_t!
=
\prod_{r=1}^m
\frac{t^m-t^{-m}}{t-t^{-1}}
\in\BZ[t,t^{-1}],
\]
where $t$ is an indeterminate.
Let $\BF=\BQ(q)$ be the rational function field in variable $q$ over $\BQ$.
For a free $\BZ$-module $\Gamma$ of finite rank equipped with
$\BZ$-linear maps
\[
Q\xrightarrow{\jmath}\Gamma\xrightarrow{p}
\Hom_\BZ(Q,\BZ)
\]
satisfying
\begin{equation}
\label{eq:Gamma}
\langle(p\circ \jmath)(\alpha),\beta\rangle
=(\alpha,\beta)
\qquad(\alpha,\beta\in Q)
\end{equation}
we define $U_\BF(\Delta,\Gamma)$ to be 
the  $\BF$-algebra
generated by 
\[
k_\gamma\;\;(\gamma\in \Gamma),
\qquad
e_i, f_i\;\;(i\in I)
\]
satisfying the fundamental relations:
\begin{align*}
&k_0=1,\quad
k_{\gamma_1}k_{\gamma_2}=
k_{\gamma_1+\gamma_2}\;\;
&(\gamma_1, \gamma_2\in\Gamma),
\\
&
k_\gamma e_i=
q^{\langle\alpha_i,p(\gamma)\rangle}
e_ik_\gamma,
\quad
k_\gamma f_i=
q^{-\langle\alpha_i,p(\gamma)\rangle}
f_ik_\gamma
&(\gamma\in \Gamma,\; i\in I),
\\
&
e_if_j-f_je_i=\delta_{ij}
\frac{k_i-k_i^{-1}}{q_i-q_i^{-1}}
&(i, j\in I),
\\
&
\sum_{r=0}^{1-a_{ij}}
(-1)^r
e_i^{(1-a_{ij}-r)}e_je_i^{(r)}=0
&(i, j\in I,\; i\ne j),
\\
&
\sum_{r=0}^{1-a_{ij}}
(-1)^r
f_i^{(1-a_{ij}-r)}f_jf_i^{(r)}=0
&(i, j\in I,\; i\ne j).
\end{align*}
Here, 
$q_i=q^{d_i}$, 
$k_i=k_{\jmath(\alpha_i)}$
for $i\in I$, and 
$e_i^{(n)}=e_i^{n}/[n]_{q_i}!$, 
$f_i^{(n)}=f_i^{n}/[n]_{q_i}!$
for $i\in I$, $n\in\BZ_{\geqq0}$.
Then $U_\BF(\Delta,\Gamma)$ turns out to be a Hopf algebra
via
\begin{align*}
&\Delta(k_\gamma)=k_\gamma\otimes k_\gamma,
\quad
\Delta(e_i)=e_i\otimes1+k_i\otimes e_i,
\quad
\Delta(f_i)=f_i\otimes k_i^{-1}+1\otimes f_i,
\\
&\varepsilon(k_y)=1,
\quad
\varepsilon(e_i)=\varepsilon(f_i)=0,
\\
&S(k_\gamma)=k_\gamma^{-1},\quad
S(e_i)=-k_i^{-1}e_i,
\quad
S(f_i)=-f_ik_i
\end{align*}
for $\gamma\in\Gamma$, $i\in I$, 
where $\Delta$, $\varepsilon$, $S$ are the comultiplication, the counit and the antipode respectively.
The Hopf algebra $U_\BF(\Delta,\Gamma)$ is called the quantized enveloping algebra.
We refer the reader to \cite{Jan} for fundamental results on it.

Denote by $U^0_\BF(\Delta,\Gamma)$ the subalgebra of $U_\BF(\Delta,\Gamma)$ generated by $k_\gamma$ for $\gamma\in\Gamma$.
Then we have 
$U_\BF^0(\Delta,\Gamma)=
\bigoplus_{\gamma\in\Gamma}
\BF k_\gamma$.

We have the adjoint action of $U_\BF(\Delta,\Gamma)$ on $U_\BF(\Delta,\Gamma)$ given by
\begin{equation}
\label{eq:ad}
\ad(u)(u')=
\sum_{(u)}u_{(0)}u'(Su_{(1)})
\qquad(u, u'\in U_\BF(\Delta,\Gamma)).
\end{equation}

\subsection{}
Let $G$ be a connected reductive algebraic group over the complex number field $\BC$ with root system $\Delta$.
We fix a maximal torus $H$ of $G$ and denote by 
\[
\CR=(\Lambda,\Delta,\Lambda^\vee,\Delta^\vee)\]
the root datum associated to $G$ and $H$.
Namely, 
$\Lambda$ and $\Lambda^\vee$  are the character group and the cocharacter group of $H$ respectively, 
and $\Delta$, $\Delta^\vee$ are 
the set of  roots and the set of coroots respectively.
We set
\[
\Lambda^+=
\{\lambda\in\Lambda\mid
\langle\lambda,\alpha^\vee\rangle\geqq0
\;\;(\forall\alpha\in\Delta^+)\}
\subset \Lambda.
\]
We denote by $\Gg$ and $\Gh$ the Lie algebras of $G$ and $H$ respectively.
We identify $\Lambda$ and $\Lambda^\vee$ with $\BZ$-lattices of 
$\Gh^*$ and $\Gh$ respectively.
We define subalgebras 
$\Gn$, $\Gb$, $\Gn^+$, $\Gb^+$
of $\Gg$ by
\[
\Gn=\bigoplus_{\alpha\in\Delta^+}\Gg_{-\alpha},
\qquad
\Gb=\Gh\oplus\Gn,
\qquad
\Gn^+=\bigoplus_{\alpha\in\Delta^+}\Gg_{\alpha},
\qquad
\Gb^+=\Gh\oplus\Gn^+,
\]
where 
\[
\Gg_\alpha=
\{x\in\Gg\mid[h,x]=\alpha(h)x\;\;(h\in\Gh)\}
\qquad(\alpha\in\Delta).
\]
We denote by $B$, $B^+$ the closed subgroups of $G$ corresponding to $\Gb$, $\Gb^+$ respectively.

\subsection{}
Note that the condition \eqref{eq:Gamma} is satisfied for $\Gamma=\Lambda^\vee$ 
with
\[
Q\xrightarrow{\jmath_0}\Lambda^\vee\xrightarrow{p_0}
\Hom_\BZ(Q,\BZ),
\]
where $\jmath_0$  is  given by
\[
\jmath_0(\alpha_i)=d_i\alpha_i^\vee\quad(i\in I),
\]
and $p_0$ is the composite of the canonical maps $\Lambda^\vee\cong\Hom_\BZ(\Lambda,\BZ)\to\Hom_\BZ(Q,\BZ)$.
For $\lambda\in\Lambda$ we define an algebra homomorphism 
\begin{equation}
\label{eq:h-char}
\chi_\lambda:U^0_\BF(\Delta,\Lambda^\vee)\to \BF
\end{equation}
by
$\chi_\lambda(k_\gamma)
=q^{\langle\lambda,\gamma\rangle}$ for $\gamma\in\Lambda^\vee$.

Set $\BA=\BZ[q,q^{-1}]$.
We define an $\BA$-subalgebra $U^0_\BA(\CR)$ of 
$U^0_\BF(\Delta,\Lambda^\vee)$ by
\[
U^0_\BA(\CR)=
\{h\in U^0_\BF(\Delta,\Lambda^\vee)\mid
\chi_\lambda(h)\in\BA
\;\;(\forall \lambda\in \Lambda)\}
\]
(see \cite{TA}), 
and define $U_\BA(\CR)$
to be the $\BA$-subalgebra  of 
$U_\BF(\Delta,\Lambda^\vee)$ generated by
$U^0_\BA(\CR)$, $e_i^{(n)}$, $f_i^{(n)}$ for $i\in I$, $n\in\BZ_{\geqq0}$.
It is a Hopf algebra over $\BA$.

For a field $K$ and $\zeta\in K^\times$ we define a Hopf algebra $U_{K,\zeta}(\CR)$ over $K$ by 
\begin{equation}
\label{eq:UR}
U_{K,\zeta}(\CR)=K\otimes_\BA U_\BA(\CR)
\end{equation}
with respect to the ring homomorphism $\BA\to K$ given by $q\mapsto\zeta$.
This Hopf algebra is called the Lusztig type specialization of the quantized enveloping algebra.
Although $U_{K,\zeta}(\CR)$ depends on the root datum $\CR$ (and the choice of  \eqref{eq:sb}), we denote it by $U_{K,\zeta}(\Gg)$ in the rest of this paper.

We denote by $U_{K,\zeta}(\Gh)$ the $K$-subalgebra of $U_{K,\zeta}(\Gg)$ generated by the image of $U^0_\BA(\CR)$.
We have $U_{K,\zeta}(\Gh)
\cong K\otimes_\BA U^0_\BA(\CR)$, and hence 
\eqref{eq:h-char} induces  a $K$-algebra homomorphism
\begin{equation}
\label{eq:h-char2}
\chi_\lambda:U_{K,\zeta}(\Gh)\to K
\qquad(\lambda\in\Lambda).
\end{equation}

For $\Ga=\Gn$, $\Gb$, $\Gn^+$, $\Gb^+$ we define a $K$-subalgebra $U_{K,\zeta}(\Ga)$ 
of 
$U_{K,\zeta}(\Gg)$ by
\begin{align*}
&
U_{K,\zeta}(\Gn)=
\langle f_i^{(n)}\mid i\in I,\; n\in\BZ_{\geqq0}\rangle,
\qquad
U_{K,\zeta}(\Gb)=
\langle
U_{K,\zeta}(\Gh),\;
U_{K,\zeta}(\Gn)
\rangle,
\\
&
U_{K,\zeta}(\Gn^+)=
\langle e_i^{(n)}\mid i\in I,\; n\in\BZ_{\geqq0}\rangle,
\qquad
U_{K,\zeta}(\Gb^+)=
\langle
U_{K,\zeta}(\Gh),\;
U_{K,\zeta}(\Gn^+)
\rangle.
\end{align*}
We also set 
\[
\tilde{U}_{K,\zeta}(\Gn)=S(U_{K,\zeta}(\Gn)).
\]
\begin{remark}
Our conjecture on the cohomology groups of the sheaf of rings of universally twisted differential operators on the quantized flag manifold is naturally formulated in terms of the De Concini-Kac type quantized enveloping algebra $U^{\DK}_{\BC,\zeta}(\Gg)$ (see \ref{subsec:DK} below)
regarded as a $U_{\BC,\zeta}(\Gg)$-module via the adjoint action.
In  reformulating it  into a statement about $O_{\BC,\zeta}(G) \;(\subset U_{\BC,\zeta}(\Gg)^*)$ 
(see Conjecture \ref{conj} and Section \ref{subsec:QC1} below)
we used in \cite{T3} 
the pairing 
\[
\kappa:U_{K,\zeta}(\Gg)\times
U^{\DK}_{K,\zeta}(\Gg)\to K
\]
respecting the adjoint action 
constructed in \cite{TK}.
The pairing $\kappa$ is naturally described using the triangular description \eqref{eq:trig1} below.
This is one of the reasons why we mainly use 
$\tilde{U}_{K,\zeta}(\Gn)$ rather than 
${U}_{K,\zeta}(\Gn)$ in this paper.
\end{remark}

We see easily that $U_{K,\zeta}(\Gh)$, $U_{K,\zeta}(\Gb)$, $U_{K,\zeta}(\Gb^+)$ are Hopf subalgebras of $U_{K,\zeta}(\Gg)$.
The multiplication of $U_{K,\zeta}(\Gg)$ induces 
$K$-module isomorphisms
\begin{equation}
\label{eq:trig1}
{\tilde{U}}_{K,\zeta}(\Gn)
\otimes
U_{K,\zeta}(\Gh)
\otimes
U_{K,\zeta}(\Gn^+)
\cong
U_{K,\zeta}(\Gg),
\end{equation}
\begin{equation}
\label{eq:trig2}
{\tilde{U}}_{K,\zeta}(\Gn)
\otimes
U_{K,\zeta}(\Gh)
\cong
U_{K,\zeta}(\Gb),
\qquad
U_{K,\zeta}(\Gh)
\otimes
U_{K,\zeta}(\Gn^+)
\cong
U_{K,\zeta}(\Gb^+).
\end{equation}

For $i\in I$ denote by $T_i$ the algebra automorphism of $U_\BF(\Delta,\Lambda^\vee)$ given by 
\begin{align*}
T_i(k_\gamma)=&k_{s_i\gamma}
\qquad(\gamma\in\Lambda^\vee),
\\
T_i(e_j)=&
\begin{cases}
-f_ik_i\quad&(j=i)
\\
\sum_{r=0}^{-a_{ij}}(-q_i)^{-r}e_i^{(-a_{ij}-r)}e_je_i^{(r)}\qquad&(j\ne i),
\end{cases}
\\
T_i(f_j)=&
\begin{cases}
-k_i^{-1}e_i\quad&(j=i)
\\
\sum_{r=0}^{-a_{ij}}(-q_i)^{r}f_i^{(r)}f_jf_i^{(-a_{ij}-r)}\qquad&(j\ne i)
\end{cases}
\end{align*}
(see \cite{Lb}).
This gives an action of the braid group associated to $W$ on $U_\BF(\Delta,\Lambda^\vee)$.
Since this action preserves $U_\BA(\CR)$,
we have also an action of the braid group on 
$U_{K,\zeta}(\Gg)$.
In particular, for $w\in W$ we can define an algebra automorphism $T_w$ of $U_{K,\zeta}(\Gg)$
by $T_w=T_{i_1}\cdots T_{i_r}$, where $w=s_{i_1}\dots s_{i_r}$ is a reduced expression of $w$.

The adjoint action \eqref{eq:ad} for $U_\BF(\Delta,\Lambda^\vee)$  induces the adjoint action of 
$U_{K,\zeta}(\Gg)$ on $U_{K,\zeta}(\Gg)$.
We have the weight space decompositions
\[
{\tilde{U}}_{K,\zeta}(\Gn)=
\bigoplus_{\beta\in Q^+}
{\tilde{U}}_{K,\zeta}(\Gn)_{-\beta}, 
\qquad
U_{K,\zeta}(\Gn^+)=
\bigoplus_{\beta\in Q^+}
U_{K,\zeta}(\Gn^+)_{\beta},
\]
where for $U={\tilde{U}}_{K,\zeta}(\Gn)$,  
${U}_{K,\zeta}(\Gn^+)$ we set
\[
U_{\gamma}
=
\{u\in U\mid
\ad(h)(u)=\chi_\gamma(h)u\;\;
(h\in U_{K,\zeta}(\Gh))\}
\qquad(\gamma\in Q).
\]

\subsection{}
For a left (resp.\ right) $U_{K,\zeta}(\Gh)$-module $V$ and $\lambda\in\Lambda$ we set
\begin{gather*}
V_\lambda
=\{v\in V\mid
hv=\chi_\lambda(h)v\;(h\in U_{K,\zeta}(\Gh))\}
\\
(\text{resp.}\;\;
V_\lambda
=\{v\in V\mid
vh=\chi_\lambda(h)v\;(h\in U_{K,\zeta}(\Gh))\}).
\end{gather*}
For $\Ga=\Gg$, $\Gb$, $\Gb^+$,  $\Gh$ we say that a left $U_{K,\zeta}(\Ga)$-module $V$ is diagonalizable if we have
$
V=\bigoplus_{\lambda\in\Lambda}V_\lambda
$
as a $U_{K,\zeta}(\Gh)$-module.
We denote by $\Mod_\diag(U_{K,\zeta}(\Ga))$ the category of diagonalizable left $U_{K,\zeta}(\Ga)$-modules.
We say that a left $U_{K,\zeta}(\Ga)$-module
$V$ is integrable if it is diagonalizable and satisfies
$\dim U_{K,\zeta}(\Ga)v<\infty$ for any $v\in V$.
We denote by $\Mod_\inte(U_{K,\zeta}(\Ga))$ the category of integrable left $U_{K,\zeta}(\Ga)$-modules.
The notion of  integrable right $U_{K,\zeta}(\Ga)$-modules is defined similarly.
We define a left exact functor 
\begin{equation}
\label{eq:finite}
(\bullet)^\inte:\Mod(U_{K,\zeta}(\Ga))
\to
\Mod_\inte(U_{K,\zeta}(\Ga))
\end{equation}
by associating to 
$M\in \Mod(U_{K,\zeta}(\Ga))$
the largest integrable submodule $M^\inte$ of $M$.

For a left $U_{K,\zeta}(\Ga)$-module $V$ its dual space $V^*=\Hom_K(V,K)$ is endowed with a right $U_{K,\zeta}(\Ga)$-module structure via
\[
\langle v^*u,v\rangle
=
\langle v^*, uv\rangle
\qquad
(v^*\in V^*, v\in V, u\in U_{K,\zeta}(\Ga)).
\]
We define a linear map
\[
\Phi_V:V^*\otimes V\to U_{K,\zeta}(\Ga)^*
\]
by 
\[
\langle\Phi_V(v^*\otimes v), u\rangle=
\langle v^*u,v\rangle
=
\langle v^*, uv\rangle
\qquad
(v^*\in V^*, v\in V, u\in U_{K,\zeta}(\Ga)).
\]
The elements of the image of $\Phi_V$ are called the matrix coefficients of the left $U_{K,\zeta}(\Ga)$-module $V$.

For two $U_{K,\zeta}(\Ga)$-modules $V_1$, $V_2$ we regard $V_1\otimes V_2$ as a 
$U_{K,\zeta}(\Ga)$-module using the comultiplication $\Delta:U_{K,\zeta}(\Ga)\to 
U_{K,\zeta}(\Ga)\otimes U_{K,\zeta}(\Ga)$.

We say that a left $U_{K,\zeta}(\Gg)$-module $V$ is a highest weight module with highest weight $\lambda\in\Lambda$ if it is generated by a non-zero element $v$ of $V_\lambda$ satisfying $e_i^{(n)}v=0$ for any $i\in I$ and $n>0$.
Such $v\in V_\lambda$ is called the highest weight vector of $V$.
It is well-known that there exists an integrable highest weight module with highest weight $\lambda$ if and only if $\lambda\in\Lambda^+$.
For $\lambda\in\Lambda^+$ we have a universal integrable highest weight module
\begin{equation}
V_{K,\zeta}(\lambda)
=
U_{K,\zeta}(\Gg)/\CI_\lambda
\end{equation}
with
\[
\CI_\lambda=
\sum_{h\in U_{K,\zeta}(\Gh)}
U_{K,\zeta}(\Gg)(h-\chi_\lambda(h))
+
\sum_{i\in I, n>0}
U_{K,\zeta}(\Gg)e_i^{(n)}
+
\sum_{i\in I, n>\langle\lambda,\alpha_i^\vee\rangle}
U_{K,\zeta}(\Gg)f_i^{(n)},
\]
called the Weyl module.
It has a highest weight vector $v_\lambda=\overline{1}$.
We define a right integrable $U_{K,\zeta}(\Gg)$-module $V_{K,\zeta}^*(\lambda)$ by 
\begin{equation}
V_{K,\zeta}^*(\lambda)=\Hom_K(V_{K,\zeta}(\lambda),K).
\end{equation}
We define $v^*_\lambda\in V_{K,\zeta}^*(\lambda)_\lambda$ by $\langle v^*_\lambda, v_\lambda\rangle=1$.
We occasionally regard 
$V_{K,\zeta}^*(\lambda)$ 
 as a left integrable 
$U_{K,\zeta}(\Gg)$-module by 
\[
\langle uv^*,v\rangle
=
\langle v^*, {}^tuv\rangle
\qquad(v^*\in V_{K,\zeta}^*(\lambda), \; 
u\in U_{K,\zeta}(\Gg),\;
v\in V_{K,\zeta}(\lambda)),
\]
where $u\mapsto {}^tu$ is the algebra anti-automorphism of $U_{K,\zeta}(\Gg)$ given by
\[
h\mapsto h\quad(h\in U_{K,\zeta}(\Gh)),
\qquad
e_i^{(n)}\mapsto f_i^{(n)},\;\;f_i^{(n)}\mapsto e_i^{(n)}
\quad(i\in I, \; n\geqq0).
\]
The submodule $U_{K,\zeta}(\Gg)v^*_\lambda$ of 
$V_{K,\zeta}^*(\lambda)$ is known to be the 
unique (up to isomorphism) irreducible highest weight module with highest weight $\lambda$.

\subsection{}
\label{subsec:DK}
Set $\BA'=\BZ[q,q^{-1},(q_i-q_i^{-1})^{-1}\mid i\in I]\subset\BF$.
For $\Gamma$ as in \ref{subsec:QE} satisfying \eqref{eq:Gamma} 
we define  $U^{\DK}_{\BA'}(\Delta,\Gamma)$
to be the ${\BA'}$-subalgebra  of 
$U_\BF(\Delta,\Gamma)$ generated by
$k_\gamma$, $e_i$, $f_i$ for $\gamma\in\Gamma$, $i\in I$.
It is also a Hopf algebra over ${\BA'}$.
For a field $K$ and $\zeta\in K^\times$ satisfying 
$\zeta^{2d_i}\ne1$ for any $i\in I$
 we define a Hopf algebra $U^\DK_{K,\zeta}(\Delta,\Gamma)$ by 
\[
U^\DK_{K,\zeta}(\Delta,\Gamma)=K\otimes_{\BA'} U^\DK_{\BA'}(\Delta,\Gamma)
\]
with respect to the ring homomorphism ${\BA'}\to K$ given by $q\mapsto\zeta$.
This Hopf algebra is called the De Concini-Kac type specialization of the quantized enveloping algebra.

Note that the condition \eqref{eq:Gamma} is satisfied for $\Gamma=Q$ with
\[
Q\xrightarrow{\id}Q \xrightarrow{p_1}
\Hom_\BZ(Q,\BZ),
\]
where $p_1$ is given by 
\[
\langle p_1(\gamma),\delta\rangle
=
(\gamma,\delta)
\qquad(\gamma, \delta\in Q).
\]
We set $U^\DK_{K,\zeta}(\Gg)=U^\DK_{K,\zeta}(\Delta,Q)$ in the following.

For $\Ga=\Gh$, $\Gn$, $\Gb$ we define a $K$-subalgebra $U^\DK_{K,\zeta}(\Ga)$ 
of 
$U^\DK_{K,\zeta}(\Gg)$ by
\begin{gather*}
U^\DK_{K,\zeta}(\Gh)=
\langle k_\gamma\mid \gamma\in Q\rangle,
\quad
U^\DK_{K,\zeta}(\Gn)=
\langle f_i\mid i\in I\rangle,
\quad
\\
U^\DK_{K,\zeta}(\Gb)=
\langle
U^\DK_{K,\zeta}(\Gh),\;
U^\DK_{K,\zeta}(\Gn)
\rangle.
\end{gather*}
Then $U^\DK_{K,\zeta}(\Gb)$, $U^\DK_{K,\zeta}(\Gh)$ are Hopf subalgebras of $U^\DK_{K,\zeta}(\Gg)$.
The multiplication of $U^\DK_{K,\zeta}(\Gg)$ induces a
$K$-module isomorphism
\begin{gather*}
U^\DK_{K,\zeta}(\Gn)
\otimes
U^\DK_{K,\zeta}(\Gh)
\cong
U^\DK_{K,\zeta}(\Gb).
\end{gather*}
For $\gamma=\sum_{i\in I}m_i\alpha_i\in Q^+$ we denote by $U^\DK_{K,\zeta}(\Gn)_{-\gamma}$ the subspace of $U^\DK_{K,\zeta}(\Gn)$ spanned by the elements of the form 
$f_{i_1}\dots f_{i_N}$ where each $i\in I$ appears $m_i$-times in the sequence $i_1,\dots, i_N$.
We have the weight space decomposition
\[
U^\DK_{K,\zeta}(\Gn)=
\bigoplus_{\gamma\in Q^+}
U^\DK_{K,\zeta}(\Gn)_{-\gamma}.
\]

We have a bilinear form 
\begin{equation}
\label{eq:Dpair}
\tau:U_{K,\zeta}(\Gb^+)\times
U_{K,\zeta}^\DK(\Gb)
\to K,
\end{equation}
called the Drinfeld pairing,
characterized by the properties:
\begin{align*}
&
\tau(x,y_1y_2)=
(\tau\otimes\tau)(\Delta(x),y_2\otimes y_1)
\qquad
&(x\in U_{K,\zeta}(\Gb^+),
\;
y_1, y_2\in U^\DK_{K,\zeta}(\Gb)),
\\
&
\tau(x_1x_2,y)=
(\tau\otimes\tau)(x_1\otimes x_2,\Delta(y))
\qquad
&(x_1, x_2\in U_{K,\zeta}(\Gb^+), 
\;
y\in U^\DK_{K,\zeta}(\Gb)),
\\
&
\tau(h,k_\gamma)=\chi_\gamma(h)
&(\gamma\in Q, h\in U_{K,\zeta}(\Gh)),
\\
&
\tau(h, f_i)=0
&(i\in I, h\in U_{K,\zeta}(\Gh)),
\\
&
\tau(e_i^{(n)}, k_\gamma)=0
&(i\in I, n>0, \gamma\in Q),
\\
&
\tau(e_i^{(n)},f_j)=\delta_{ij}\delta_{1n}\frac1{\zeta^{d_i}-\zeta^{-d_i}}
&(i, j\in I, n\geqq0)
\end{align*}
(see \cite{TK}).
In general this kind of pairing is also called a Hopf skew-pairing.

\section{Quantized coordinate algebras}
\subsection{}\label{subsec:QC1}
Let $\Ga=\Gg$, $\Gb$, $\Gb^+$, $\Gh$, and  
set $A=G$, $B$, $B^+$, $H$ accordingly.
The dual space $U_{K,\zeta}(\Ga)^*=\Hom_K(U_{K,\zeta}(\Ga),K)$ is naturally endowed with a $K$-algebra structure  via the multiplication 
\begin{equation}
\label{eq:O-mult}
\langle\varphi\psi,u\rangle
=
\langle\varphi\otimes\psi,\Delta(u)\rangle
\qquad
(\varphi, \psi\in U_{K,\zeta}(\Ga)^*, u\in U_{K,\zeta}(\Ga)).
\end{equation}
It is also a $U_{K,\zeta}(\Ga)$-bimodule by
\begin{equation}
\label{eq:O-bim}
\langle u_1\varphi u_2,u\rangle
=
\langle \varphi, u_2uu_1\rangle
\qquad
(\varphi\in U_{K,\zeta}(\Ga)^*, 
u_1, u_2, u\in U_{K,\zeta}(\Ga)).
\end{equation}
We denote by $O_{K,\zeta}(A)$ the largest left integrable $U_{K,\zeta}(\Ga)$-submodule 
of 
$U_{K,\zeta}(\Ga)^*$.
It is spanned by the matrix coefficients of finite dimensional left integrable $U_{K,\zeta}(\Ga)$-modules, and
coincides with  the largest right integrable $U_{K,\zeta}(\Ga)$-submodule of 
$U_{K,\zeta}(\Ga)^*$.
We have a natural Hopf algebra structure of  $O_{K,\zeta}(A)$ where 
the multiplication, the unit, the comultiplication,  
the counit, the antipode 
of $O_{K,\zeta}(A)$ are given by 
the transposes of
the comultiplication, the counit, the multiplication,  
the unit, the antipode 
of $U_{K,\zeta}(\Ga)$ respectively.
The following result is standard 
(see for example \cite[Proposition 4.4]{TA}).
\begin{proposition}
\label{prop:comod-int}
The category of right $O_{K,\zeta}(A)$-comodules is naturally equivalent to 
$\Mod_\inte(U_{K,\zeta}(\Ga))$.
\end{proposition}

The inclusions $U_{K,\zeta}(\Gh)\subset 
U_{K,\zeta}(\Gb)
\subset
U_{K,\zeta}(\Gg)$
and 
$U_{K,\zeta}(\Gh)\subset 
U_{K,\zeta}(\Gb^+)
\subset
U_{K,\zeta}(\Gg)$
of Hopf algebras give surjective Hopf algebra homomorphisms
\[
O_{K,\zeta}(G)
\to
O_{K,\zeta}(B)
\to
O_{K,\zeta}(H),
\qquad
O_{K,\zeta}(G)
\to
O_{K,\zeta}(B^+)
\to
O_{K,\zeta}(H).
\]

Let
\[
K[\Lambda]=\bigoplus_{\lambda\in\Lambda}Ke(\lambda)
\]
be the group algebra of $\Lambda$.
Then we have an isomorphism
\[
O_{K,\zeta}(H)
\cong
K[\Lambda]
\qquad
(\chi_\lambda \leftrightarrow e(\lambda))
\]
of $K$-algebras.
We set
\begin{align}
{\tilde{U}}_{K,\zeta}(\Gn)^\bigstar=&
\bigoplus_{\gamma\in Q^+}
({\tilde{U}}_{K,\zeta}(\Gn)_{-\gamma})^*
\subset {\tilde{U}}_{K,\zeta}(\Gn)^*,
\\
{U}_{K,\zeta}(\Gn^+)^\bigstar=&
\bigoplus_{\gamma\in Q^+}
({U}_{K,\zeta}(\Gn^+)_{\gamma})^*
\subset {U}_{K,\zeta}(\Gn^+)^*.
\end{align}
The following fact is proved similarly to \cite[Section 3.3]{TM}, where a closely related result over $\BF$  is shown.

\begin{lemma}
\label{lem:OG}
Regard 
$O_{K,\zeta}(H)$, 
${\tilde{U}}_{K,\zeta}(\Gn)^\bigstar$, 
${U}_{K,\zeta}(\Gn^+)^\bigstar$
as subspaces of $U_{K,\zeta}(\Gg)^*$ via the embeddings
\begin{align}
\label{eq:emb0}
O_{K,\zeta}(H)&\hookrightarrow U_{K,\zeta}(\Gg)^*
\qquad
(\langle\chi, yhx\rangle=\varepsilon(y)\chi(h)\varepsilon(x)),
\\
\label{eq:embm}
{\tilde{U}}_{K,\zeta}(\Gn)^\bigstar&\hookrightarrow U_{K,\zeta}(\Gg)^*
\qquad
(\langle\psi, yhx\rangle=\psi(y)\varepsilon(h)\varepsilon(x)),
\\
\label{eq:embp}
{U}_{K,\zeta}(\Gn^+)^\bigstar&\hookrightarrow U_{K,\zeta}(\Gg)^*
\qquad
(\langle\varphi, yhx\rangle=\varepsilon(y)\varepsilon(h)\varphi(x))
\end{align}
for $y\in {\tilde{U}}_{K,\zeta}(\Gn)$, $h\in {U}_{K,\zeta}(\Gh)$, $x\in {U}_{K,\zeta}(\Gn^+)$
(see \eqref{eq:trig1}).
\begin{itemize}
\item[(i)]
The multiplication of $U_{K,\zeta}(\Gg)^*$ induces the embedding
\begin{equation}
\label{eq:emb}
{\tilde{U}}_{K,\zeta}(\Gn)^\bigstar
\otimes
O_{K,\zeta}(H)
\otimes
U_{K,\zeta}(\Gn^+)^\bigstar
\hookrightarrow
U_{K,\zeta}(\Gg)^*
\end{equation}
of $K$-modules such that 
\[
O_{K,\zeta}(G)\subset
{\tilde{U}}_{K,\zeta}(\Gn)^\bigstar
\otimes
O_{K,\zeta}(H)
\otimes
U_{K,\zeta}(\Gn^+)^\bigstar
\subset
U_{K,\zeta}(\Gg)^*.
\]
\item[(ii)]
\eqref{eq:emb0} is a homomorphism of $K$-algebras.
\item[(iii)]
There exist $K$-algebra structures of 
${\tilde{U}}_{K,\zeta}(\Gn)^\bigstar$ and
${U}_{K,\zeta}(\Gn^+)^\bigstar$  such that
\eqref{eq:embm}, \eqref{eq:embp} are homomorphisms of $K$-algebras.
\item[(iv)]
For $\chi\in O_{K,\zeta}(H)$, 
$\psi\in
{\tilde{U}}_{K,\zeta}(\Gn)^\bigstar$, 
$\varphi\in
({U}_{K,\zeta}(\Gn^+)_{\delta})^*$
$ (\delta\in Q^+)$ we have
\[
\begin{split}
\langle\psi\chi\varphi,yhx\rangle
=&
\langle\psi,y\rangle
\langle k_{\jmath_0(\delta)}\chi,h\rangle
\langle\varphi,x\rangle
\\
&\qquad(
y\in {\tilde{U}}_{K,\zeta}(\Gn),\;
h\in {U}_{K,\zeta}(\Gh),\; 
x\in {U}_{K,\zeta}(\Gn^+))
\end{split}
\]
with respect to the multiplication of ${U}_{K,\zeta}(\Gg)^*$.
\end{itemize}
\end{lemma}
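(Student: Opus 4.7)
My plan is to prove (iv) first by direct computation and then derive (i), (ii), (iii) as consequences.

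The key step is to expand $\Delta^2(yhx) = \sum_{(yhx)} (yhx)_{(0)} \otimes (yhx)_{(1)} \otimes (yhx)_{(2)}$ with each tensor slot written in the triangular form of \eqref{eq:trig1}. Starting from $\Delta(k_\gamma) = k_\gamma \otimes k_\gamma$, $\Delta(e_i) = e_i \otimes 1 + k_i \otimes e_i$, the antipode-derived formula $\Delta(S(f_i)) = S(f_i) \otimes 1 + k_i \otimes S(f_i)$, and the commutation $k_\gamma e_i = q^{\langle\alpha_i, p(\gamma)\rangle} e_i k_\gamma$ (with its counterpart for $f_i$), a straightforward induction on weight shows that for $x \in U_{K,\zeta}(\Gn^+)_\delta$ every term in $\Delta(x)$ can be put in the form $x' k_{\jmath(\nu)} \otimes x''$ with $x', x'' \in U_{K,\zeta}(\Gn^+)$, $x''$ of weight $\nu$, and the two $U^+$-weights summing to $\delta$. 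The analogous statement holds for $\Delta(y)$ with $y \in \tilde{U}_{K,\zeta}(\Gn)_{-\gamma}$, and iterating gives a triangular-form expression for $\Delta^2(yhx)$.

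Pairing with $\psi \otimes \chi \otimes \varphi$ via \eqref{eq:emb0}, \eqref{eq:embm}, \eqref{eq:embp}, each functional acts as a projection onto one triangular component with counit evaluation on the other two. The support conditions force a single summand to survive: $y$ appears entirely in slot zero with trivial $U_{K,\zeta}(\Gh)$- and $U_{K,\zeta}(\Gn^+)$-parts, $x$ appears entirely in slot two with trivial $\tilde{U}_{K,\zeta}(\Gn)$- and $U_{K,\zeta}(\Gh)$-parts, and slot one carries $h$ together with the $U_{K,\zeta}(\Gh)$-twist $k_{\jmath(\delta)}$ absorbed from $\Delta(x)$; the twist equals $k_{\jmath(\delta)}$ precisely because the $U_{K,\zeta}(\Gn^+)$-part of slot two must equal $x$, of weight $\delta$. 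This yields the formula in (iv).

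Parts (ii) and (iii) then follow routinely. For (ii), take $\psi = \varepsilon = \varphi$: since $\varepsilon$ is supported in weight zero, $\delta = 0$, and the product of $\chi_1, \chi_2 \in O_{K,\zeta}(H)$ in $U_{K,\zeta}(\Gg)^*$ reduces to their intrinsic product in $O_{K,\zeta}(H)$. For (iii), define the multiplications on $\tilde{U}_{K,\zeta}(\Gn)^\bigstar$ and $U_{K,\zeta}(\Gn^+)^\bigstar$ as restrictions from $U_{K,\zeta}(\Gg)^*$; specializing (iv) to $\chi = \varepsilon$ together with either $\psi = \varepsilon$ or $\varphi = \varepsilon$ shows the subspaces are closed under the multiplication in $U_{K,\zeta}(\Gg)^*$.

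Finally, for (i), injectivity of the multiplication map from the triple tensor product into $U_{K,\zeta}(\Gg)^*$ is immediate from (iv): the formula identifies $\psi \chi \varphi$ with a separable product of pairings under \eqref{eq:trig1}, twisted by a unit in $O_{K,\zeta}(H)$. The containment $O_{K,\zeta}(G) \subset \tilde{U}_{K,\zeta}(\Gn)^\bigstar \otimes O_{K,\zeta}(H) \otimes U_{K,\zeta}(\Gn^+)^\bigstar$ comes from the matrix coefficient description: for a finite-dimensional integrable $V$ with weight basis, a matrix coefficient decomposes via the triangular action (first $x \in U_{K,\zeta}(\Gn^+)$ producing a $U^+$-matrix coefficient, then $h \in U_{K,\zeta}(\Gh)$ yielding a weight character in $O_{K,\zeta}(H)$, then $y \in \tilde{U}_{K,\zeta}(\Gn)$ producing a $\tilde{U}^-$-matrix coefficient) as a finite sum indexed by source and target weight pairs, each summand lying in the image of the multiplication map. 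The main technical obstacle will be the careful bookkeeping of the $k$-twists when rearranging $\Delta^2(yhx)$ into triangular form, particularly verifying that the single $U_{K,\zeta}(\Gh)$-twist surviving in slot one is exactly $k_{\jmath(\delta)}$ and not some other shift picked up from the commutations across the three tensor slots.
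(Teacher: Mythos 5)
The paper does not prove Lemma \ref{lem:OG} directly but cites \cite{TM} for it; the closest in-paper analogue is the proof of Lemma \ref{lem:OP}, and your overall architecture — establish (iv) by expanding $\Delta^2(yhx)$ in the triangular form \eqref{eq:trig1} and then deduce the other parts — matches that template. Your computation of (iv) is sound: the coproduct formulas $\Delta(e_i)=e_i\otimes 1+k_i\otimes e_i$ and $\Delta(S(f_i))=S(f_i)\otimes1+k_i\otimes S(f_i)$ do give $\Delta(x)\in\sum U_{K,\zeta}(\Gn^+)_{\delta'}k_{\jmath(\nu)}\otimes U_{K,\zeta}(\Gn^+)_\nu$ and the analogue for $\tilde U_{K,\zeta}(\Gn)$, iterating these places $y$ in slot $0$, $x$ in slot $2$, and the residual twist $k_{\jmath(\delta)}$ in slot $1$, exactly as you say. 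Your argument for (i) — injectivity from the separable form in (iv) plus the weight bookkeeping, and the containment of $O_{K,\zeta}(G)$ from the weight-space decomposition of matrix coefficients — is also correct.

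Where you are imprecise is in the claim that (ii) and (iii) are "specializations of (iv)." Formula (iv) concerns a single triple product $\psi\chi\varphi$ with each factor lying in one of the three subspaces; setting $\psi=\varphi=\varepsilon$ merely recovers the embedding \eqref{eq:emb0} for a single $\chi$ and says nothing about $\chi_1\chi_2$, and likewise it tells you nothing about $\varphi_1\varphi_2$ or $\psi_1\psi_2$. To prove (ii) and (iii) you genuinely need separate (short, parallel) computations of $\langle\chi_1\chi_2, yhx\rangle$, $\langle\varphi_1\varphi_2, yhx\rangle$, and $\langle\psi_1\psi_2, yhx\rangle$ using a single coproduct $\Delta(yhx)$ with the same triangular bookkeeping; these go through without difficulty, so the gap is one of stated logical dependence rather than of substance. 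This is also how the paper treats the corresponding step in the proof of Lemma \ref{lem:OP}(iii), which is a direct $\Delta$-computation independent of (iv). I would correct the phrasing to say that (ii) and (iii) are established by arguments \emph{parallel} to the one for (iv), not deduced from it.
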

We regard ${\tilde{U}}_{K,\zeta}(\Gn)^\bigstar$, 
$U_{K,\zeta}(\Gn^+)^\bigstar$ as $K$-algebras by
Lemma \ref{lem:OG} (iii).
The multiplication is given by
\begin{align}
\label{eq:mult-n1}
\langle
\psi\psi',y
\rangle
=&\langle
\psi\otimes\psi',\Delta(y)
\rangle
\qquad
(\psi, \psi'\in {\tilde{U}}_{K,\zeta}(\Gn)^\bigstar, \;
y\in {\tilde{U}}_{K,\zeta}(\Gn)),
\\
\label{eq:mult-n2}
\langle
\varphi\varphi',x
\rangle
=&\langle
\varphi\otimes\varphi',\Delta(x)
\rangle
\qquad
(\varphi, \varphi'\in U_{K,\zeta}(\Gn^+)^\bigstar, \;
x\in U_{K,\zeta}(\Gn^+))
\end{align}
using the embeddings \eqref{eq:embm}, 
\eqref{eq:embp}.
We have also the following:
\begin{align}
\label{eq:com1}
\varphi\chi_\lambda
=&
\zeta^{-\langle\lambda,\jmath_0(\gamma)\rangle}
\chi_\lambda\varphi
\qquad(\lambda\in\Lambda, \varphi\in (U_{K,\zeta}(\Gn^+)_\gamma)^*),
\\
\label{eq:com2}
\chi_\lambda\psi
=&
\zeta^{\langle\lambda,\jmath_0(\gamma)\rangle}
\psi\chi_\lambda
\qquad
(\lambda\in\Lambda, 
\psi\in (\tilde{U}_{K,\zeta}(\Gn)_{-\gamma})^*).
\end{align}

By applying the algebra homomorphisms 
$O_{K,\zeta}(G)\to O_{K,\zeta}(B)$ 
and $O_{K,\zeta}(G)\to O_{K,\zeta}(B^+)$ 
to Lemma \ref{lem:OG} we also obtain the following.
\begin{lemma}
\label{lem:OB}
Regard 
$O_{K,\zeta}(H)$, 
${\tilde{U}}_{K,\zeta}(\Gn)^\bigstar$
as subspaces of $U_{K,\zeta}(\Gb)^*$ via the embeddings
\begin{align}
\label{eq:emb0-b}
O_{K,\zeta}(H)&\hookrightarrow U_{K,\zeta}(\Gb)^*
\qquad
(\langle\chi, yh\rangle=\varepsilon(y)\chi(h)),
\\
\label{eq:embm-b}
{\tilde{U}}_{K,\zeta}(\Gn)^\bigstar&\hookrightarrow U_{K,\zeta}(\Gb)^*
\qquad
(\langle\psi, yh\rangle=\psi(y)\varepsilon(h))
\end{align}
for $y\in {\tilde{U}}_{K,\zeta}(\Gn)$, $h\in {U}_{K,\zeta}(\Gh)$ (see \eqref{eq:trig2}).
\begin{itemize}
\item[(i)]
The multiplication of $U_{K,\zeta}(\Gb)^*$ induces the isomorphism
\[
{\tilde{U}}_{K,\zeta}(\Gn)^\bigstar
\otimes
O_{K,\zeta}(H)
\cong
O_{K,\zeta}(B)\subset
U_{K,\zeta}(\Gb)^*
\]
of $K$-modules.
\item[(ii)]
\eqref{eq:emb0-b} and \eqref{eq:embm-b} are homomorphisms of $K$-algebras, where the algebra structure of ${\tilde{U}}_{K,\zeta}(\Gn)^\bigstar$ is given by Lemma \ref{lem:OG} (iii). 
\item[(iii)]
For $\chi\in O_{K,\zeta}(H)$
and
$\psi\in
{\tilde{U}}_{K,\zeta}(\Gn)^\bigstar
$
we have
\[
\langle\psi\chi,yh\rangle
=
\langle\psi,y\rangle
\langle \chi,h\rangle
\quad(
y\in {\tilde{U}}_{K,\zeta}(\Gn),\;
h\in {U}_{K,\zeta}(\Gh))
\]
with respect to the multiplication of ${U}_{K,\zeta}(\Gb)^*$.
\end{itemize}
\end{lemma}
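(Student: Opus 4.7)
The plan is to deduce this from Lemma \ref{lem:OG} via the restriction map $\pi:U_{K,\zeta}(\Gg)^*\to U_{K,\zeta}(\Gb)^*$ induced by the Hopf subalgebra inclusion $U_{K,\zeta}(\Gb)\hookrightarrow U_{K,\zeta}(\Gg)$. Since $U_{K,\zeta}(\Gb)$ is a subcoalgebra, $\pi$ is a $K$-algebra homomorphism for the multiplication \eqref{eq:O-mult}, and on $O_{K,\zeta}(G)$ it coincides with the surjective Hopf-algebra map $O_{K,\zeta}(G)\twoheadrightarrow O_{K,\zeta}(B)$. The strategy is to transport each assertion of Lemma \ref{lem:OG} through $\pi$ using the triangular decomposition \eqref{eq:trig2}.

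For (ii), the embeddings \eqref{eq:emb0-b} and \eqref{eq:embm-b} are, by direct inspection, the restrictions to $U_{K,\zeta}(\Gb)\cong {\tilde U}_{K,\zeta}(\Gn)\otimes U_{K,\zeta}(\Gh)$ of the embeddings \eqref{eq:emb0} and \eqref{eq:embm} (set $x=1$ in \eqref{eq:trig1}), and hence are algebra homomorphisms by Lemma \ref{lem:OG}(ii)-(iii). Formula (iii) follows from a direct Sweedler computation in $U_{K,\zeta}(\Gb)^*$ using \eqref{eq:O-mult}, the triangular form of the coproduct $\Delta({\tilde U}_{K,\zeta}(\Gn))\subset {\tilde U}_{K,\zeta}(\Gn)U_{K,\zeta}(\Gh)\otimes {\tilde U}_{K,\zeta}(\Gn)$, and the fact that $U_{K,\zeta}(\Gh)$ is a Hopf subalgebra: the auxiliary Cartan factors and ${\tilde U}$-factors that appear in $\Delta(yh)$ are absorbed by the $\varepsilon$'s in \eqref{eq:embm-b} and \eqref{eq:emb0-b}, leaving exactly $\psi(y)\chi(h)$.

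For (i), the inclusion $O_{K,\zeta}(B)\subset {\tilde U}_{K,\zeta}(\Gn)^\bigstar\otimes O_{K,\zeta}(H)$ follows from Lemma \ref{lem:OG}(i) by applying $\pi$: for $\varphi\in (U_{K,\zeta}(\Gn^+)_\delta)^*$ embedded via \eqref{eq:embp}, direct computation yields $\pi(\varphi)=\varphi(1)\cdot\varepsilon_{U_{K,\zeta}(\Gb)}$, which vanishes for $\delta>0$ and is a scalar for $\delta=0$; hence $\pi$ kills the non-trivial part of the third factor of \eqref{eq:emb}, and the surjectivity $\pi(O_{K,\zeta}(G))=O_{K,\zeta}(B)$ gives the claimed containment. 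The reverse inclusion is obtained by realising each $\psi\chi_\lambda$ with $\psi\in ({\tilde U}_{K,\zeta}(\Gn)_{-\beta})^*$ and $\chi_\lambda\in O_{K,\zeta}(H)$ as a matrix coefficient: choosing $\lambda\in\Lambda^+$ large enough that $y\mapsto y v_\lambda$ is injective on ${\tilde U}_{K,\zeta}(\Gn)_{-\beta}$, one extends $\psi$ to some $v^*\in V_{K,\zeta}^*(\lambda)_{\lambda-\beta}$, whence $\pi(\Phi_{V_{K,\zeta}(\lambda)}(v^*\otimes v_\lambda))=\psi\chi_\lambda$ lies in $O_{K,\zeta}(B)$; multiplying by arbitrary $\chi_\mu\in O_{K,\zeta}(H)$ sweeps out all of ${\tilde U}_{K,\zeta}(\Gn)^\bigstar\otimes O_{K,\zeta}(H)$. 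Injectivity of the multiplication map then follows from (iii) by a standard tensor-product argument.

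The principal technical obstacle is the verification of (iii): the non-cocommutativity of $\Delta$ creates many mixed Sweedler terms in $\Delta(yh)$, and one must confirm that precisely these mixed components are killed by the $\varepsilon$'s in \eqref{eq:embm-b} and \eqref{eq:emb0-b}. Once (iii) is in hand, parts (i) and (ii) reduce cleanly to the restriction argument above.
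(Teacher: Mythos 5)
Your proposal is correct and takes essentially the same approach as the paper, which disposes of this lemma in one sentence by asserting that it follows from Lemma \ref{lem:OG} via the surjection $O_{K,\zeta}(G)\to O_{K,\zeta}(B)$; your write-up simply supplies the details the paper leaves implicit, and is consistent with the fully written-out argument the paper gives for the analogous Lemma \ref{lem:OP}. One minor remark: for (iii) you need not redo a Sweedler computation from scratch — it is immediate from Lemma \ref{lem:OG}(iv) by taking $\delta=0$, $\varphi=\varepsilon$, and restricting to $U_{K,\zeta}(\Gb)$, using that the restriction map is a $K$-algebra homomorphism.
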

\begin{lemma}
\label{lem:OB+}
Regard 
$O_{K,\zeta}(H)$, 
${{U}}_{K,\zeta}(\Gn^+)^\bigstar$
as subspaces of $U_{K,\zeta}(\Gb^+)^*$ via the embeddings
\begin{align}
\label{eq:emb0-b+}
O_{K,\zeta}(H)&\hookrightarrow U_{K,\zeta}(\Gb^+)^*
\qquad
(\langle\chi, hx\rangle=\chi(h)\varepsilon(x)),
\\
\label{eq:embm-b+}
{{U}}_{K,\zeta}(\Gn^+)^\bigstar&\hookrightarrow U_{K,\zeta}(\Gb^+)^*
\qquad
(\langle\varphi, hx\rangle=\varepsilon(h)\varphi(x))
\end{align}
for $x\in {{U}}_{K,\zeta}(\Gn^+)$, $h\in {U}_{K,\zeta}(\Gh)$ (see \eqref{eq:trig2}).
\begin{itemize}
\item[(i)]
The multiplication of $U_{K,\zeta}(\Gb^+)^*$ induces the isomorphism
\[
O_{K,\zeta}(H)\otimes
{{U}}_{K,\zeta}(\Gn^+)^\bigstar
\cong
O_{K,\zeta}(B^+)\subset
U_{K,\zeta}(\Gb^+)^*
\]
of $K$-modules.
\item[(ii)]
\eqref{eq:emb0-b+} and \eqref{eq:embm-b+} are homomorphisms of $K$-algebras, where the algebra structure of ${{U}}_{K,\zeta}(\Gn^+)^\bigstar$ is given by Lemma \ref{lem:OG} (iii). 
\item[(iii)]
For $\chi\in O_{K,\zeta}(H)$
and
$\varphi\in
({{U}}_{K,\zeta}(\Gn^+)_{\delta})^*
$ ($\delta\in Q^+$)
we have
\[
\langle\chi\varphi,hx\rangle
=
\langle k_{\jmath_0(\delta)}\chi,h\rangle
\langle\varphi,x\rangle
\quad(
h\in {U}_{K,\zeta}(\Gh),\;
x\in {{U}}_{K,\zeta}(\Gn^+))
\]
with respect to the multiplication of ${U}_{K,\zeta}(\Gb^+)^*$.
\end{itemize}
\end{lemma}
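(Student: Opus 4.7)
The plan is to derive Lemma \ref{lem:OB+} as a direct corollary of Lemma \ref{lem:OG} by restricting along the Hopf subalgebra inclusion $U_{K,\zeta}(\Gb^+)\hookrightarrow U_{K,\zeta}(\Gg)$. Since $U_{K,\zeta}(\Gb^+)$ is in particular a sub-coalgebra of $U_{K,\zeta}(\Gg)$, the transpose restriction map $\res\colon U_{K,\zeta}(\Gg)^*\to U_{K,\zeta}(\Gb^+)^*$ is a homomorphism of $K$-algebras for the convolution products \eqref{eq:O-mult}, and it sends $O_{K,\zeta}(G)$ onto $O_{K,\zeta}(B^+)$ by construction of the surjective Hopf algebra map recalled in \S\ref{subsec:QC1}.

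First I would track what each of the three embeddings of Lemma \ref{lem:OG} becomes under $\res$. Writing a general element of $U_{K,\zeta}(\Gb^+)=U_{K,\zeta}(\Gh)\cdot U_{K,\zeta}(\Gn^+)$ as $hx$, i.e.\ as $1\cdot h\cdot x$ in the triangular decomposition \eqref{eq:trig1}, the embedding \eqref{eq:emb0} restricts to \eqref{eq:emb0-b+} and \eqref{eq:embp} restricts to \eqref{eq:embm-b+} by direct inspection. On the other hand, for $\psi\in({\tilde{U}}_{K,\zeta}(\Gn)_{-\gamma})^*$ the formula \eqref{eq:embm} gives $\langle\res(\psi),hx\rangle=\psi(1)\varepsilon(h)\varepsilon(x)$, which vanishes whenever $\gamma\neq 0$; so the factor ${\tilde{U}}_{K,\zeta}(\Gn)^\bigstar$ collapses to $K\cdot\varepsilon$ under $\res$.

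Applying $\res$ to the decomposition of Lemma \ref{lem:OG}(i) thus identifies $O_{K,\zeta}(B^+)=\res(O_{K,\zeta}(G))$ with the image of the multiplication map $O_{K,\zeta}(H)\otimes U_{K,\zeta}(\Gn^+)^\bigstar\to U_{K,\zeta}(\Gb^+)^*$, which yields (i) once injectivity is verified. Injectivity is clean: a relation $\sum_i\chi_{\lambda_i}\varphi_i=0$ with $\varphi_i\in(U_{K,\zeta}(\Gn^+)_{\delta_i})^*$ separates into weight components under the action of $U_{K,\zeta}(\Gh)$ (using \eqref{eq:com1}), and within each weight component the linear independence of the characters $\chi_\lambda$ on $U_{K,\zeta}(\Gh)$ together with the nondegeneracy of the tautological pairing between $U_{K,\zeta}(\Gn^+)_\delta$ and its dual forces every term to vanish.

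For (ii), both maps \eqref{eq:emb0-b+} and \eqref{eq:embm-b+} are algebra homomorphisms because they are obtained by composing the algebra embeddings \eqref{eq:emb0} and \eqref{eq:embp} of Lemma \ref{lem:OG}(ii),(iii) with the algebra homomorphism $\res$. Finally (iii) is read off directly from Lemma \ref{lem:OG}(iv) by specializing $\psi=\varepsilon$ (which corresponds to the $\gamma=0$ component) and then restricting to $y=1$. I do not anticipate a genuine obstacle; the only subtle point is confirming that $\res$ is an algebra map, for which the sub-coalgebra (equivalently, sub-Hopf-algebra) structure on $U_{K,\zeta}(\Gb^+)$ is essential.
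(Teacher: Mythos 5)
Your overall strategy --- transporting Lemma~\ref{lem:OG} along the restriction $\res\colon U_{K,\zeta}(\Gg)^*\to U_{K,\zeta}(\Gb^+)^*$ --- is exactly the route the paper indicates (in the sentence preceding Lemmas~\ref{lem:OB} and~\ref{lem:OB+}), and your handling of parts~(ii), (iii), and the injectivity portion of~(i) is correct.

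There is, however, a genuine gap in the surjectivity half of~(i). Lemma~\ref{lem:OG}(i) asserts only the \emph{inclusion} $O_{K,\zeta}(G)\subset{\tilde{U}}_{K,\zeta}(\Gn)^\bigstar\otimes O_{K,\zeta}(H)\otimes U_{K,\zeta}(\Gn^+)^\bigstar$; the image of that multiplication map inside $U_{K,\zeta}(\Gg)^*$ is the strictly larger localization $\hat{O}_{K,\zeta}(G)=\CS^{-1}O_{K,\zeta}(G)$ (see \eqref{eq:C-tri} and the surrounding discussion), not $O_{K,\zeta}(G)$ itself. Applying $\res$ therefore yields only $O_{K,\zeta}(B^+)=\res(O_{K,\zeta}(G))\subset O_{K,\zeta}(H)\cdot U_{K,\zeta}(\Gn^+)^\bigstar$, and you have not established the reverse inclusion $O_{K,\zeta}(H)\cdot U_{K,\zeta}(\Gn^+)^\bigstar\subset O_{K,\zeta}(B^+)$; in particular, it remains to show that each $\varphi\in(U_{K,\zeta}(\Gn^+)_\gamma)^*$ is a matrix coefficient of a finite-dimensional integrable $U_{K,\zeta}(\Gb^+)$-module. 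The cleanest way to close the gap in the spirit of your argument is to observe that $\res$ collapses not only the factor ${\tilde{U}}_{K,\zeta}(\Gn)^\bigstar$ but also the localization: $\res(\chi_\lambda^{-1})=\chi_{-\lambda}$ is a genuine one-dimensional character of $U_{K,\zeta}(\Gb^+)$ and hence lies in $O_{K,\zeta}(B^+)$, so $\res(\hat{O}_{K,\zeta}(G))\subset O_{K,\zeta}(B^+)$ and the two inclusions combine to an equality. Alternatively, one can reproduce for $U_{K,\zeta}(\Gn^+)^\bigstar$ the matrix-coefficient argument used in the proof of Lemma~\ref{lem:OP}(i) for ${\tilde{U}}_{K,\zeta}(\Gm_J)^\bigstar$.
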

\begin{lemma}
\label{lem:duality}
Assume 
$\zeta^{2d_i}\ne1$ for any $i\in I$.
Then there exists an anti-isomorphism
\[
F:U^\DK_{K,\zeta}(\Gn)\to U_{K,\zeta}(\Gn^+)^\bigstar
\]
of $K$-algebras satisfying 
$F(U^\DK_{K,\zeta}(\Gn)_{-\gamma})=(U_{K,\zeta}(\Gn^+)_\gamma)^*$ for any $\gamma\in Q^+$.
\end{lemma}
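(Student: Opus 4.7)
The plan is to realize $F$ by restricting the Drinfeld pairing $\tau$ of \eqref{eq:Dpair} to $U_{K,\zeta}(\Gn^+)\times U^\DK_{K,\zeta}(\Gn)$, i.e.\ to set
\[
F(y)(x)=\tau(x,y)\qquad(x\in U_{K,\zeta}(\Gn^+),\;y\in U^\DK_{K,\zeta}(\Gn)).
\]
I would then check three things: (a) weight compatibility, so that $F$ lands in the graded dual $U_{K,\zeta}(\Gn^+)^\bigstar$; (b) anti-multiplicativity; and (c) bijectivity on each weight component.

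For (a), iteratively applying the first two defining identities of $\tau$, together with the values of $\tau$ on the generators listed after \eqref{eq:Dpair} and the explicit comultiplication formulas $\Delta(e_i)=e_i\otimes 1+k_i\otimes e_i$ and $\Delta(f_i)=f_i\otimes k_i^{-1}+1\otimes f_i$, shows by induction on $Q^+$ that $\tau(x,y)=0$ whenever the weights of $x\in U_{K,\zeta}(\Gn^+)_\mu$ and $y\in U^\DK_{K,\zeta}(\Gn)_{-\nu}$ satisfy $\mu\neq\nu$. In particular $F(U^\DK_{K,\zeta}(\Gn)_{-\gamma})\subset(U_{K,\zeta}(\Gn^+)_\gamma)^*$, and since each $U_{K,\zeta}(\Gn^+)_\gamma$ is finite-dimensional, $F$ takes values in $U_{K,\zeta}(\Gn^+)^\bigstar$. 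For (b), the first defining identity of $\tau$ yields
\[
F(y_1y_2)(x)=\tau(x,y_1y_2)=\sum_{(x)}\tau(x_{(0)},y_2)\,\tau(x_{(1)},y_1)=\sum_{(x)}F(y_2)(x_{(0)})\,F(y_1)(x_{(1)}),
\]
which by the multiplication formula \eqref{eq:mult-n2} equals $(F(y_2)F(y_1))(x)$, so $F$ is an anti-algebra homomorphism.

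The main obstacle is (c). The standard approach is to fix a reduced expression $w_0=s_{i_1}\cdots s_{i_N}$ of the longest element of $W$ and to form the PBW root vectors $f_{\beta_k}=T_{i_1}\cdots T_{i_{k-1}}(f_{i_k})\in U^\DK_{K,\zeta}(\Gn)$ and $e_{\beta_k}=T_{i_1}\cdots T_{i_{k-1}}(e_{i_k})\in U_{K,\zeta}(\Gn^+)$. For each $\gamma\in Q^+$, ordered monomials in the $f_{\beta_k}$ give a $K$-basis of $U^\DK_{K,\zeta}(\Gn)_{-\gamma}$, while ordered monomials in the divided powers $e_{\beta_k}^{(n)}$ give a basis of $U_{K,\zeta}(\Gn^+)_\gamma$; both are indexed by tuples $(n_1,\dots,n_N)\in\BZ_{\geqq0}^N$ with $\sum_k n_k\beta_k=\gamma$, so the two finite-dimensional spaces have the same dimension. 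The classical computation of $\tau$ on these PBW monomials (see \cite{TK}) shows that the resulting matrix is diagonal with diagonal entries given by explicit products of $q$-factorials and powers of $(\zeta^{d_i}-\zeta^{-d_i})^{-1}$; the hypothesis $\zeta^{2d_i}\ne 1$ ensures these entries are non-zero, hence the pairing is non-degenerate on each weight component. Combined with the dimension count, this forces $F$ to restrict to an isomorphism $U^\DK_{K,\zeta}(\Gn)_{-\gamma}\xrightarrow{\sim}(U_{K,\zeta}(\Gn^+)_\gamma)^*$ for every $\gamma\in Q^+$, completing the proof. The delicate point is thus the explicit evaluation of $\tau$ on the PBW bases; once this input is available from \cite{TK}, everything else is formal from the Hopf-pairing axioms.
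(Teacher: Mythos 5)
Your proposal follows exactly the paper's route: define $F$ via the Drinfeld pairing, observe that anti-multiplicativity is formal from the first defining axiom of $\tau$ together with \eqref{eq:mult-n2}, and deduce bijectivity weight-by-weight from the known diagonality and non-vanishing of $\tau$ on PBW monomials (which is where $\zeta^{2d_i}\neq 1$ is used). Your write-up merely spells out the weight-compatibility and dimension-count details that the paper leaves implicit, citing the same PBW evaluation of the pairing.
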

\begin{proof}
The desired anti-isomorphism $F$ is given by
\[
\langle F(y),x\rangle=
\tau(x,y)
\qquad(y\in U^\DK_{K,\zeta}(\Gn), 
x\in U_{K,\zeta}(\Gn^+)).
\]
In fact we see easily from the characterization of the Drinfeld pairing that $F$ is an anti-homomorphism.
We can also show that $F$ is bijective  using the well-known formula for the values of the Drinfeld pairing on the PBW-bases
(see  \cite{KR}, \cite{LS}, \cite{Lb}, \cite{TD}).
\end{proof}

We see easily that for $\lambda\in\Lambda^+$ we have
\[
\chi_\lambda=\Phi_{V_{K,\zeta}(\lambda)}(v^*_\lambda\otimes v_\lambda)
\in O_{K,\zeta}(G)\subset U_{K,\zeta}(\Gg)^*,
\]
where $O_{K,\zeta}(H)$ is regarded as a subalgebra of 
$U_{K,\zeta}(\Gg)^*$
via the embedding \eqref{eq:emb0}.
We set 
\begin{equation}
\CS=\{\chi_\lambda\mid\lambda\in \Lambda^+\}
\subset
{O}_{K,\zeta}(G).
\end{equation}
It is known that the multiplicative subset 
$\CS$ satisfies the left and right Ore conditions in $O_{K,\zeta}(G)$ (see \cite{J}, \cite[Corollary 3.12]{TM}).
We define a $K$-algebra $\hat{O}_{K,\zeta}(G)$ by
\begin{equation}
\hat{O}_{K,\zeta}(G)=\CS^{-1}{O}_{K,\zeta}(G).
\end{equation}
It is a subalgebra of $U_{K,\zeta}(\Gg)^*$.
It is easily seen that 
the image of 
\eqref{eq:emb}
coincides with $\hat{O}_{K,\zeta}(G)$ (see \cite[Proposition 3.13]{TM}).
Hence the multiplication of $\hat{O}_{K,\zeta}(G)$ induces the  
isomorphism
\begin{equation}
\label{eq:C-tri}
{\tilde{U}}_{K,\zeta}(\Gn)^\bigstar
\otimes
O_{K,\zeta}(H)
\otimes
U_{K,\zeta}(\Gn^+)^\bigstar 
\cong\hat{O}_{K,\zeta}(G)
\end{equation}
of $K$-modules.
\subsection{}
We have
\begin{equation}
\label{eq:F1}
u(\varphi\varphi')=
\sum_{(u)}
(u_{(0)}\varphi)(u_{(1)}\varphi'),
\qquad
(\varphi\varphi')u=
\sum_{(u)}
(\varphi u_{(0)})(\varphi'u_{(1)})
\end{equation}
for 
$u\in U_{K,\zeta}(\Gg)$, 
$\varphi, \varphi'\in U_{K,\zeta}(\Gg)^*$
with respect to the 
$U_{K,\zeta}(\Gg)$-bimodule structure \eqref{eq:O-bim} of $U_{K,\zeta}(\Gg)^*$.
Note that  $O_{K,\zeta}(G)$ is a $U_{K,\zeta}(\Gg)$-subbimodule
of $U_{K,\zeta}(\Gg)^*$.
It is also easily seen that  
$\hat{O}_{K,\zeta}(G)$ is a $U_{K,\zeta}(\Gg)$-subbimodule
of $U_{K,\zeta}(\Gg)^*$.

We define the adjoint action of 
$U_{K,\zeta}(\Gg)$ on $U_{K,\zeta}(\Gg)^*$ by
\begin{equation}
\label{eq:adOG}
\ad(u)(\varphi)=\sum_{(u)}u_{(0)}\varphi(S^{-1}u_{(1)})
\qquad
(u\in U_{K,\zeta}(\Gg),\; \varphi\in U_{K,\zeta}(\Gg)^*).
\end{equation}
This gives the adjoint action of 
$U_{K,\zeta}(\Gg)$ on $O_{K,\zeta}(G)$ and $\hat{O}_{K,\zeta}(G)$.
By \eqref{eq:F1} we have 
\begin{equation}
\label{eq:F2}
\ad(u)(\varphi\varphi')
=
\sum_{(u)}
(u_{(0)}\varphi(S^{-1}u_{(2)}))
(\ad(u_{(1)})(\varphi'))
\end{equation}
for $\varphi, \varphi'\in U_{K,\zeta}(\Gg)^*$, $u\in U_{K,\zeta}(\Gg)$.

\section{Parabolic subalgebras}
\subsection{}
For $J\subset I$
we set
\[
\Delta_J=\sum_{j\in J}\BZ\alpha_j\cap\Delta,
\qquad
\Delta_J^+=\Delta_J\cap \Delta^+,
\qquad
W_J=\langle s_j\mid j\in J\rangle\subset W,
\]
and define subalgebras
$\Gp_J$, $\Gl_J$, $\Gb_J$, $\Gn_J$, $\Gn^+_J$, $\Gm_J$ of $\Gg$ by 
\begin{gather*}
\Gp_J=\Gb\oplus
\left(
\bigoplus_{\alpha\in\Delta_J^+}\Gg_\alpha
\right),
\qquad
\Gl_J=\Gh\oplus
\left(
\bigoplus_{\alpha\in\Delta_J}\Gg_\alpha
\right),
\qquad
\Gb_J=
\Gh\oplus
\left(
\bigoplus_{\alpha\in\Delta_J^+}\Gg_{-\alpha}
\right),
\\
\Gn_J=
\bigoplus_{\alpha\in\Delta_J^+}
\Gg_{-\alpha},
\qquad
\Gn^+_J=
\bigoplus_{\alpha\in\Delta_J^+}
\Gg_{\alpha},
\qquad
\Gm_J=
\bigoplus_{\alpha\in\Delta^+\setminus\Delta_J^+}
\Gg_{-\alpha}.
\end{gather*}
respectively.
Note that $\Gp_\emptyset=\Gb$ and $\Gp_I=\Gg$.
We denote by $P_J$, $L_J$, $B_J$ the closed connected subgroups of $G$ with Lie algebras 
$\Gp_J$, $\Gl_J$, $\Gb_J$ respectively.

\subsection{}
Let $\Ga=\Gp_J$, $\Gl_J$, $\Gb_J$, 
and write $A=P_J$, $L_J$, $B_J$ accordingly.
We define a subalgebra $U_{K,\zeta}(\Ga)$ of $U_{K,\zeta}(\Gg)$ by
\begin{align*}
U_{K,\zeta}(\Gp_J)=&
\langle U_{K,\zeta}(\Gh), \;
f_i^{(n)}, \; e_j^{(n)}
\mid
i\in I,\; j\in J,\; n\geqq0
\rangle,
\\
U_{K,\zeta}(\Gl_J)=&
\langle U_{K,\zeta}(\Gh), \;
f_j^{(n)}, \; e_j^{(n)}
\mid
j\in J,\; n\geqq0
\rangle,
\\
U_{K,\zeta}(\Gb_J)=&
\langle U_{K,\zeta}(\Gh), \;
f_j^{(n)}
\mid
j\in J,\; n\geqq0
\rangle.
\end{align*}
It is a Hopf algebra.
We have 
\begin{equation}
\label{eq:CRJ}
U_{K,\zeta}(\Gl_J)=U_{K,\zeta}(\CR_J)
\end{equation}
for the root datum
$
\CR_J=(\Lambda,\Delta_J,\Lambda^\vee, \Delta_J^\vee)$ using the notation \eqref{eq:UR}.
We define the notion of integrable 
(resp.\ diagonalizable) left and right
$U_{K,\zeta}(\Ga)$-modules similarly to the case $
\Ga=\Gg$, $\Gb$, and denote by 
$\Mod_\inte(U_{K,\zeta}(\Ga))$ 
(resp.\ $\Mod_\diag(U_{K,\zeta}(\Ga))$)
the category of integrable 
(resp.\ diagonalizable) left
$U_{K,\zeta}(\Ga)$-modules.
We define $O_{K,\zeta}(A)$ to be the largest left integrable $U_{K,\zeta}(\Ga)$-submodule of $U_{K,\zeta}(\Ga)^*$.
It is also the  largest right integrable $U_{K,\zeta}(\Ga)$-submodule of $U_{K,\zeta}(\Ga)^*$.
We have a natural Hopf algebra structure of $O_{K,\zeta}(A)$.

We denote by 
\begin{equation}
\pi_J:U_{K,\zeta}(\Gp_J)
\to
U_{K,\zeta}(\Gl_J)
\end{equation}
the algebra homomorphism given by 
\[
\pi_J(a)=a\quad(a\in U_{K,\zeta}(\Gl_J)),
\qquad
\pi_J(f_i^{(n)})=0
\quad(i\in I\setminus J,\; n>0).
\]
It is a Hopf algebra homomorphism and induces the embedding
\begin{equation}
\pi_J^*:O_{K,\zeta}(L_J)
\hookrightarrow
O_{K,\zeta}(P_J)
\end{equation}
of Hopf algebras.

We set 
\begin{align*}
\tilde{U}_{K,\zeta}(\Gn_J)
=
\langle Sf_j^{(n)}\mid j\in J,\; n\geqq0\rangle,
\qquad
{U}_{K,\zeta}(\Gn^+_J)
=
\langle e_j^{(n)}\mid j\in J,\; n\geqq0\rangle.
\end{align*}
They are subalgebras of 
$\tilde{U}_{K,\zeta}(\Gn)$ and 
${U}_{K,\zeta}(\Gn^+)$ respectively.
We have isomorphisms
\begin{align}
\label{eq:tri-p1}
U_{K,\zeta}(\Gp_J)
\cong&
\tilde{U}_{K,\zeta}(\Gn)\otimes
U_{K,\zeta}(\Gh)\otimes
{U}_{K,\zeta}(\Gn^+_J),
\\
\label{eq:tri-p2}
U_{K,\zeta}(\Gl_J)
\cong&
\tilde{U}_{K,\zeta}(\Gn_J)\otimes
U_{K,\zeta}(\Gh)\otimes
{U}_{K,\zeta}(\Gn^+_J)
\end{align}
given by the multiplication of $U_{K,\zeta}(\Gg)$.
We also define a subalgebra
$\tilde{U}_{K,\zeta}(\Gm_J)$ of 
$\tilde{U}_{K,\zeta}(\Gn)$ by
\begin{equation}
\tilde{U}_{K,\zeta}(\Gm_J)
=
\tilde{U}_{K,\zeta}(\Gn)
\cap
T_{w_J}^{-1}(\tilde{U}_{K,\zeta}(\Gn)),
\end{equation}
where $w_J$ is the longest element of 
$W_J$.
The multiplication of $U_{K,\zeta}(\Gg)$ induces
\[
\tilde{U}_{K,\zeta}(\Gm_J)
\otimes
\tilde{U}_{K,\zeta}(\Gn_J)
\cong
\tilde{U}_{K,\zeta}(\Gn)
\]
(see \cite[Proposition 2.10]{TM}), 
and hence we have
\begin{equation}
\label{eq:trig-p}
\tilde{U}_{K,\zeta}(\Gm_J)
\otimes
{U}_{K,\zeta}(\Gl_J)
\cong
{U}_{K,\zeta}(\Gp_J),
\qquad
\tilde{U}_{K,\zeta}(\Gm_J)
\otimes
{U}_{K,\zeta}(\Gb_J)
\cong
{U}_{K,\zeta}(\Gb).
\end{equation}
By \cite[Lemma 2.8]{TM} we have
\begin{equation}
\label{eq:mJ-comult}
\Delta({\tilde{U}}_{K,\zeta}(\Gm_J))
\subset
U_{K,\zeta}(\Gb)\otimes 
{\tilde{U}}_{K,\zeta}(\Gm_J).
\end{equation}
We have the weight space decompositions
\[
{U}_{K,\zeta}(\Gn^+_J)
=
\bigoplus_{\gamma\in Q^+}
{U}_{K,\zeta}(\Gn^+_J)_{\gamma},
\qquad
\tilde{U}_{K,\zeta}(\Gm_J)
=
\bigoplus_{\gamma\in Q^+}
\tilde{U}_{K,\zeta}(\Gm_J)_{-\gamma}
\]
with respect to the adjoint action of ${U}_{K,\zeta}(\Gh)$.

\subsection{}
We set
\begin{align*}
{U}_{K,\zeta}(\Gn^+_J)^\bigstar
=&
\bigoplus_{\gamma\in Q^+}
({U}_{K,\zeta}(\Gn^+_J)_{\gamma})^*
\subset
{U}_{K,\zeta}(\Gn^+_J)^*,
\\
\tilde{U}_{K,\zeta}(\Gm_J)^\bigstar
=&
\bigoplus_{\gamma\in Q^+}
(\tilde{U}_{K,\zeta}(\Gm_J)_{-\gamma})^*
\subset
\tilde{U}_{K,\zeta}(\Gm_J)^*.
\end{align*}

\begin{lemma}
\label{lem:OP}
Regard 
$O_{K,\zeta}(L_J)$, 
${\tilde{U}}_{K,\zeta}(\Gm_J)^\bigstar$
as subspaces of $U_{K,\zeta}(\Gp_J)^*$ via the embeddings
\begin{align}
\label{eq:embp0}
O_{K,\zeta}(L_J)&\hookrightarrow U_{K,\zeta}(\Gp_J)^*
\qquad
(\langle\varphi, ya\rangle=\varepsilon(y)\varphi(a)),
\\
\label{eq:embpm}
{\tilde{U}}_{K,\zeta}(\Gm_J)^\bigstar&\hookrightarrow U_{K,\zeta}(\Gp_J)^*
\qquad
(\langle\psi, ya\rangle=\psi(y)\varepsilon(a))
\end{align}
for $y\in {\tilde{U}}_{K,\zeta}(\Gm_J)$, $a\in {U}_{K,\zeta}(\Gl_J)$
(see \eqref{eq:trig-p}).
\begin{itemize}
\item[(i)]
The multiplication of $U_{K,\zeta}(\Gp_J)^*$ induces the isomorphism
\begin{equation}
\label{eq:embPP}
{\tilde{U}}_{K,\zeta}(\Gm_J)^\bigstar
\otimes
O_{K,\zeta}(L_J)
\cong
O_{K,\zeta}(P_J)
\subset
U_{K,\zeta}(\Gp_J)^*
\end{equation}
of $K$-modules.
\item[(ii)]
\eqref{eq:embp0} is a homomorphism of $K$-algebras.
\item[(iii)]
There exists a natural $K$-algebra structure of 
${\tilde{U}}_{K,\zeta}(\Gm_J)^\bigstar$  such that
\eqref{eq:embpm} is a homomorphism of $K$-algebras.
\item[(iv)]
For $\varphi\in O_{K,\zeta}(L_J)$, 
$\psi\in
{\tilde{U}}_{K,\zeta}(\Gm_J)^\bigstar$ we have
\[
\langle\psi\varphi,ya\rangle
=
\langle\psi,y\rangle
\langle\varphi,a\rangle
\qquad(
y\in {\tilde{U}}_{K,\zeta}(\Gm_J),\;
a\in {U}_{K,\zeta}(\Gl_J))
\]
with respect to the multiplication of ${U}_{K,\zeta}(\Gp_J)^*$.
\end{itemize}
\end{lemma}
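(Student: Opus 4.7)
The plan is to follow the method of Lemma \ref{lem:OB} closely: the triangular decomposition \eqref{eq:trig-p} and the coproduct property \eqref{eq:mJ-comult} put $\tilde{U}_{K,\zeta}(\Gm_J)$ and $U_{K,\zeta}(\Gl_J)$ inside $U_{K,\zeta}(\Gp_J)$ in the same structural position as $\tilde{U}_{K,\zeta}(\Gn)$ and $U_{K,\zeta}(\Gh)$ occupy inside $U_{K,\zeta}(\Gb)$; indeed, the case $J=\emptyset$ recovers Lemma \ref{lem:OB}. The embeddings \eqref{eq:embp0} and \eqref{eq:embpm} are well-defined and injective by \eqref{eq:trig-p}. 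For (ii), I would identify \eqref{eq:embp0} with the pullback $\pi_J^*$ coming from the Hopf algebra homomorphism $\pi_J:U_{K,\zeta}(\Gp_J)\to U_{K,\zeta}(\Gl_J)$: since $\pi_J(y)=\varepsilon(y)$ for $y\in\tilde{U}_{K,\zeta}(\Gm_J)$ by the weight grading, one has $\langle\pi_J^*(\varphi),ya\rangle=\varphi(\pi_J(ya))=\varepsilon(y)\varphi(a)$, matching \eqref{eq:embp0}, and $\pi_J^*$ is an algebra homomorphism because $\pi_J$ is a coalgebra map.

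For (iii), I define the product on $\tilde{U}_{K,\zeta}(\Gm_J)^\bigstar$ by
\[
\langle\psi\psi',y\rangle=\sum_{(y)}\tilde{\psi}(y_{(0)})\psi'(y_{(1)})
\qquad(\psi,\psi'\in\tilde{U}_{K,\zeta}(\Gm_J)^\bigstar,\;y\in\tilde{U}_{K,\zeta}(\Gm_J)),
\]
where \eqref{eq:mJ-comult} guarantees $\Delta(y)=\sum y_{(0)}\otimes y_{(1)}\in U_{K,\zeta}(\Gb)\otimes\tilde{U}_{K,\zeta}(\Gm_J)$ and $\tilde{\psi}$ denotes the extension of $\psi$ to a functional on $U_{K,\zeta}(\Gb)$ via \eqref{eq:embm-b}; associativity follows from coassociativity of $\Delta$. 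Property (iv) is then a direct computation using $\Delta(ya)=\Delta(y)\Delta(a)$ together with \eqref{eq:trig-p}, \eqref{eq:mJ-comult}, and the definitions \eqref{eq:embp0}, \eqref{eq:embpm}: only the leading term $y\otimes a$ in the factorisation survives the counits implicit in the extensions, giving $\psi(y)\varphi(a)$.

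For (i), injectivity of the multiplication map on $\tilde{U}_{K,\zeta}(\Gm_J)^\bigstar\otimes O_{K,\zeta}(L_J)$ is immediate from (iv) together with \eqref{eq:trig-p}. Containment of the image in $O_{K,\zeta}(P_J)$ follows because \eqref{eq:embp0} lands in $O_{K,\zeta}(P_J)$ (as $\pi_J^*$ pulls integrable $U_{K,\zeta}(\Gl_J)$-modules back to integrable $U_{K,\zeta}(\Gp_J)$-modules), and because each weight component of $\tilde{U}_{K,\zeta}(\Gm_J)^\bigstar$ arises from a matrix coefficient of some $V_{K,\zeta}(\lambda)$ regarded as a $U_{K,\zeta}(\Gp_J)$-module for suitable $\lambda\in\Lambda^+$.

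The main obstacle, which I expect to require the most care, is the reverse inclusion $O_{K,\zeta}(P_J)\subset\tilde{U}_{K,\zeta}(\Gm_J)^\bigstar\otimes O_{K,\zeta}(L_J)$. For a matrix coefficient $\Phi_V(v^*\otimes v)$ of a finite-dimensional integrable $U_{K,\zeta}(\Gp_J)$-module $V$, one must choose a weight basis of $V$ adapted to the $U_{K,\zeta}(\Gl_J)$-action and exploit \eqref{eq:tri-p1} to produce an explicit finite decomposition of $\Phi_V(v^*\otimes v)$ as a sum $\sum_i\psi_i\varphi_i$ with $\psi_i\in\tilde{U}_{K,\zeta}(\Gm_J)^\bigstar$ and $\varphi_i\in O_{K,\zeta}(L_J)$; the finite-dimensionality of $V$ together with the weight grading guarantees that only finitely many homogeneous components $\psi_i$ appear, so that each $\psi_i$ genuinely lies in the restricted dual $\tilde{U}_{K,\zeta}(\Gm_J)^\bigstar$ rather than merely in the full dual $\tilde{U}_{K,\zeta}(\Gm_J)^*$.
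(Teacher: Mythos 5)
Your plan tracks the paper's own proof of Lemma~\ref{lem:OP} step for step: parts~(ii), (iii), (iv) via $\pi_J^*$ and the coproduct containment \eqref{eq:mJ-comult}, and part~(i) by splitting the middle term of \eqref{eq:embPP} into the two inclusions. The only substantive difference is one of emphasis: the paper treats $O_{K,\zeta}(P_J)\subset\tilde{U}_{K,\zeta}(\Gm_J)^\bigstar\otimes O_{K,\zeta}(L_J)$ as routine and concentrates on the opposite inclusion $\tilde{U}_{K,\zeta}(\Gm_J)^\bigstar\subset O_{K,\zeta}(P_J)$, where your sketch is a bit too compressed --- one must choose $\lambda\in\Lambda^J\cap\Lambda^+$ (not merely $\lambda\in\Lambda^+$) so that $V_{K,\zeta}(\lambda)=\tilde{U}_{K,\zeta}(\Gm_J)v_\lambda$ with $\tilde{U}_{K,\zeta}(\Gm_J)_{-\gamma}\to V_{K,\zeta}(\lambda)_{\lambda-\gamma}$ bijective for a $\lambda$ depending on $\gamma$, and the resulting matrix coefficient restricted to $U_{K,\zeta}(\Gp_J)$ equals $\psi\chi^J_\lambda$ rather than $\psi$, so one must still multiply by $\chi^J_{-\lambda}\in O_{K,\zeta}(L_J)$ to recover $\psi$.
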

\begin{proof}
(iv) By \eqref{eq:mJ-comult}  we have
\begin{align*}
\langle\psi\varphi,ya\rangle
=&\sum_{(y),(a)}
\langle\psi,y_{(0)}a_{(0)}\rangle
\langle\varphi,y_{(1)}a_{(1)}\rangle
=\sum_{(y),(a)}
\varepsilon(a_{(0)})
\varepsilon(y_{(1)})
\langle\psi,y_{(0)}\rangle
\langle\varphi,a_{(1)}\rangle
\\
=&\langle\psi,y\rangle
\langle\varphi,a\rangle.
\end{align*}

(ii) This follows from the fact that \eqref{eq:embp0} is the composite of $\pi_J^*:O_{K,\zeta}(L_J)\to O_{K,\zeta}(P_J)$ and $O_{K,\zeta}(P_J)\hookrightarrow {U}_{K,\zeta}(\Gp_J)^*$.

(iii) We have obviously $1_{O_{K,\zeta}(P_J)}=\varepsilon\in \tilde{U}_{K,\zeta}(\Gm_J)^\bigstar$.
Let $\psi, \psi'\in \tilde{U}_{K,\zeta}(\Gm_J)^\bigstar\subset {U}_{K,\zeta}(\Gp_J)^*$, and denote by $\psi\psi'$ their product inside ${U}_{K,\zeta}(\Gp_J)^*$.
Then for $y\in \tilde{U}_{K,\zeta}(\Gm_J)$, 
$a\in {U}_{K,\zeta}(\Gl_J)$
we have
\begin{align*}
\langle\psi\psi',ya\rangle
=&\sum_{(y),(a)}
\langle\psi,y_{(0)}a_{(0)}\rangle
\langle\psi',y_{(1)}a_{(1)}\rangle
=\sum_{(y),(a)}
\varepsilon(a_{(0)})
\varepsilon(a_{(1)})
\langle\psi,y_{(0)}\rangle
\langle\psi',y_{(1)}\rangle
\\
=&
\varepsilon(a)
\langle\psi\psi',y\rangle,
\end{align*}
and hence we obtain $\psi\psi'\in \tilde{U}_{K,\zeta}(\Gm_J)^\bigstar\subset {U}_{K,\zeta}(\Gp_J)^*$.

(i) By (iv) we have a natural embedding 
${\tilde{U}}_{K,\zeta}(\Gm_J)^\bigstar
\otimes
O_{K,\zeta}(L_J)
\hookrightarrow
U_{K,\zeta}(\Gp_J)^*$.
Moreover, it is easily seen that 
\[
O_{K,\zeta}(P_J)\subset
{\tilde{U}}_{K,\zeta}(\Gm_J)^\bigstar
\otimes
O_{K,\zeta}(L_J)
\subset
U_{K,\zeta}(\Gp_J)^*.
\]
Since $O_{K,\zeta}(P_J)$ is a subalgebra of 
$U_{K,\zeta}(\Gp_J)^*$, it is sufficient to show that ${\tilde{U}}_{K,\zeta}(\Gm_J)^\bigstar$ and 
$O_{K,\zeta}(L_J)$ are contained in 
$O_{K,\zeta}(P_J)$.
By the proof of (ii) we have $O_{K,\zeta}(L_J)\subset O_{K,\zeta}(P_J)$.
It remains to show 
${\tilde{U}}_{K,\zeta}(\Gm_J)^\bigstar
\subset O_{K,\zeta}(P_J)$.

Set 
\[
\Lambda^J
=\{\lambda\in\Lambda\mid
\langle\lambda,\alpha_j^\vee\rangle=0\;(j\in J)\}.
\]
For $\lambda\in\Lambda^J$ we have a character $\chi^J_\lambda:U_{K,\zeta}(\Gl_J)\to K$ given by 
\[
h\mapsto \chi_\lambda(h)\quad(h\in U_{K,\zeta}(\Gh)),\qquad
e_j^{(n)}, f_j^{(n)}\mapsto 0\quad(j\in J, n\geqq0).
\]
We have obviously $\chi^J_\lambda\in O_{K,\zeta}(L_J)$ for $\lambda\in \Lambda^J$.
For $\lambda\in\Lambda^J\cap\Lambda^+$ we have 
\[
V_{K,\zeta}(\lambda)=
\tilde{U}_{K,\zeta}(\Gn)v_\lambda
=
\tilde{U}_{K,\zeta}(\Gm_J)
\tilde{U}_{K,\zeta}(\Gn_J)v_\lambda
=
\tilde{U}_{K,\zeta}(\Gm_J)v_\lambda.
\]
Now let us show $({\tilde{U}}_{K,\zeta}(\Gm_J)_{-\gamma})^*
\subset O_{K,\zeta}(P_J)$ for any $\gamma\in Q^+$.
Let $\gamma\in Q^+$.
Similarly to \cite[Proposition 1.4.1]{TK}
we can  show  that
there exists some $\lambda\in\Lambda^J\cap \Lambda^+$ such that
\[
{\tilde{U}}_{K,\zeta}(\Gm_J)_{-\gamma}\to 
V_{K,\zeta}(\lambda)_{\lambda-\gamma}
\qquad(u\mapsto uv_\lambda)
\]
is bijective
(see also the proof of \cite[Proposition 1.10]{L0}).
Hence for any $\psi\in({\tilde{U}}_{K,\zeta}(\Gm_J)_{-\gamma})^*$
there exists a matrix coefficient $\varphi$ of $V_{K,\zeta}(\lambda)$ satisfying
\[
\langle\varphi,ya\rangle=
\langle\psi,y\rangle
\langle\chi^J_\lambda,a\rangle
\qquad(y\in\tilde{U}_{K,\zeta}(\Gm_J), \;
a\in {U}_{K,\zeta}(\Gl_J)).
\]
The  restriction $\overline{\varphi}$ of $\varphi$ to ${U}_{K,\zeta}(\Gp_J)$ belongs to $O_{K,\zeta}(P_J)$ and we have
$\overline{\varphi}=\psi\chi^J_\lambda$.
Hence we obtain 
$\psi=\overline{\varphi}\chi^J_{-\lambda}\in O_{K,\zeta}(P_J)$.
\end{proof}
We regard ${\tilde{U}}_{K,\zeta}(\Gm_J)^\bigstar$ 
as a $K$-algebra by
Lemma \ref{lem:OP} (iii).
The multiplication is given by
\[
\langle
\psi\psi',y
\rangle
=\langle
\psi\otimes\psi',\Delta(y)
\rangle
\qquad
(\psi, \psi'\in {\tilde{U}}_{K,\zeta}(\Gm_J)^\bigstar, \;
y\in {\tilde{U}}_{K,\zeta}(\Gm_J))
\]
using the embedding \eqref{eq:embpm}.
We denote by 
$\hat{O}_{K,\zeta}(P_J)$ the image of the composite of the inclusion
$\hat{O}_{K,\zeta}(G)\hookrightarrow
U_{K,\zeta}(\Gg)^*$ and 
the restriction map
$U_{K,\zeta}(\Gg)^*\to U_{K,\zeta}(\Gp_J)^*$.
By \eqref{eq:C-tri} and \eqref{eq:tri-p1} we have
\begin{equation}
\label{eq:C-tri-p}
\tilde{U}_{K,\zeta}(\Gn)^\bigstar
\otimes
O_{K,\zeta}(H)
\otimes
U_{K,\zeta}(\Gn^+_J)^\bigstar
\cong
\hat{O}_{K,\zeta}(P_J).
\end{equation}

Similarly to \eqref{eq:adOG} we define the adjoint action of $U_{K,\zeta}(\Gp_J)$ on $O_{K,\zeta}(P_J)$ by 
\begin{equation}
\label{eq:adOP}
\ad(z)(\varphi)=\sum_{(z)}z_{(0)}\varphi(S^{-1}z_{(1)})
\qquad
(z\in U_{K,\zeta}(\Gp_J),\; \varphi\in O_{K,\zeta}(P_J)).
\end{equation}

\begin{lemma}
\label{lem:adLP}
\begin{itemize}
\item[(i)]
We have
\begin{align*}
\ad(U_{K,\zeta}(\Gl_J))({U}_{K,\zeta}(\Gl_J))
&\subset
{U}_{K,\zeta}(\Gl_J),
\\
\ad(U_{K,\zeta}(\Gl_J))(\tilde{U}_{K,\zeta}(\Gm_J))
&\subset
\tilde{U}_{K,\zeta}(\Gm_J)
\end{align*}
with respect to the adjoint action of $U_{K,\zeta}(\Gp_J)$ on $U_{K,\zeta}(\Gp_J)$.
\item[(ii)]
We have
\begin{align*}
\ad(U_{K,\zeta}(\Gl_J))
(O_{K,\zeta}(L_J))
\subset&
O_{K,\zeta}(L_J),
\\
\ad(U_{K,\zeta}(\Gl_J))(\tilde{U}_{K,\zeta}(\Gm_J)^\bigstar)
\subset&
\tilde{U}_{K,\zeta}(\Gm_J)^\bigstar
\end{align*}
with respect to the adjoint action of $U_{K,\zeta}(\Gp_J)$ on $O_{K,\zeta}(P_J)$.
\item[(iii)]
For $\varphi\in O_{K,\zeta}(L_J)\subset O_{K,\zeta}(P_J)$, 
$\psi\in \tilde{U}_{K,\zeta}(\Gm_J)^\bigstar\subset O_{K,\zeta}(P_J)$ we have
\[
\ad(u)(\psi\varphi)
=\sum_{(u)}(\ad(u_{(1)})(\psi))(\ad(u_{(0)})(\varphi))
\qquad(u\in U_{K,\zeta}(\Gl_J)).
\]
\end{itemize}
\end{lemma}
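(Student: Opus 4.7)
For (i), the first inclusion $\ad(U_{K,\zeta}(\Gl_J))(U_{K,\zeta}(\Gl_J)) \subset U_{K,\zeta}(\Gl_J)$ is immediate since $U_{K,\zeta}(\Gl_J)$ is a Hopf subalgebra of $U_{K,\zeta}(\Gp_J)$. For the second inclusion, I would verify preservation of $\tilde U_{K,\zeta}(\Gm_J)$ on the generators $k_\gamma$, $e_j^{(n)}$, $f_j^{(n)}$ ($j \in J$) of $U_{K,\zeta}(\Gl_J)$. The action of $\ad(k_\gamma)$ preserves each weight space $\tilde U_{K,\zeta}(\Gm_J)_{-\beta}$. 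For the remaining generators, I would use the PBW basis of $\tilde U_{K,\zeta}(\Gm_J)$ given by ordered monomials in quantum root vectors $\tilde f_\alpha$ with $\alpha \in \Delta^+ \setminus \Delta_J^+$, constructed from a reduced expression of $w_0$ beginning with a reduced expression of $w_J$. Standard commutation relations between the Chevalley generators $e_j, f_j$ ($j \in J$) and these root vectors then show that $\ad(e_j), \ad(f_j)$ send each $\tilde f_\alpha$ into the span of such PBW monomials, giving the quantum analogue of the classical $[\Gl_J, \Gm_J] \subset \Gm_J$.

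For (ii), preservation of $O_{K,\zeta}(L_J)$ under $\ad U_{K,\zeta}(\Gl_J)$ follows from the fact that $\pi_J$ is a Hopf algebra map restricting to the identity on $U_{K,\zeta}(\Gl_J)$, so the embedding $\pi_J^*$ is $\ad U_{K,\zeta}(\Gl_J)$-equivariant. For $\tilde U_{K,\zeta}(\Gm_J)^\bigstar$, the key tool is the Hopf identity $v\cdot y = \sum_{(v)} \ad(v_{(0)})(y)\cdot v_{(1)}$. Applied to $v \in U_{K,\zeta}(\Gl_J)$ and $y \in \tilde U_{K,\zeta}(\Gm_J)$, part (i) ensures $\ad(v_{(0)})(y) \in \tilde U_{K,\zeta}(\Gm_J)$, so I can rewrite $S^{-1}(u_{(1)})\cdot ya\cdot u_{(0)}$ (for $u \in U_{K,\zeta}(\Gl_J)$, $a \in U_{K,\zeta}(\Gl_J)$, $y \in \tilde U_{K,\zeta}(\Gm_J)$) in the form $y'a'$ of the decomposition \eqref{eq:trig-p}. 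Invoking the defining property $\psi(y'a') = \psi(y')\varepsilon(a')$ of $\psi \in \tilde U_{K,\zeta}(\Gm_J)^\bigstar$ then yields $\ad(u)(\psi)(ya) = \ad(u)(\psi)(y)\,\varepsilon(a)$.

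For (iii), I evaluate both sides on $ya$ with $y \in \tilde U_{K,\zeta}(\Gm_J)$ and $a \in U_{K,\zeta}(\Gl_J)$. The LHS, via the rewriting used in (ii) together with Lemma \ref{lem:OP}(iv), becomes
\[
\sum \psi\bigl(\ad(S^{-1}u_{(1)(1)})(y)\bigr)\,\varphi\bigl(S^{-1}u_{(1)(0)}\cdot a\cdot u_{(0)}\bigr),
\]
summed over $\Delta^2(u)$. For the RHS, expanding $\ad(u_{(1)})(\psi)(y)$ via \eqref{eq:adOP} and the $\tilde U_{K,\zeta}(\Gm_J)^\bigstar$-property, then simplifying via the counit identity $\sum \varepsilon(x_{(0)}) x_{(1)} = x$ applied twice, and combining with $\ad(u_{(0)})(\varphi)(a) = \sum \varphi(S^{-1}u_{(0)(1)}\cdot a\cdot u_{(0)(0)})$, gives a sum over $\Delta^2(u)$ of $\psi\bigl(\ad(S^{-1}u_{(1)})(y)\bigr)\,\varphi\bigl(S^{-1}u_{(0)(1)}\cdot a\cdot u_{(0)(0)}\bigr)$. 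Coassociativity then identifies the two $\Delta^2(u)$-sums. The main obstacle is part (i): since $\ad(e_j), \ad(f_j)$ for $j \in J$ move weights by nontrivial elements of $\BZ\alpha_j$, no purely weight-theoretic argument suffices, and a direct PBW-based (or braid-group-based) verification is needed.
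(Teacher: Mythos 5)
Your proposal is correct and takes essentially the paper's route. The paper proves (ii) and (iii) simultaneously from the single identity
\[
\langle\ad(u)(\psi\varphi),ya\rangle
=\sum_{(u)}\langle\ad(u_{(1)})(\psi),y\rangle\,\langle\ad(u_{(0)})(\varphi),a\rangle,
\]
obtained exactly by the rewriting $\ad(S^{-1}u)(ya)=\sum_{(u)}(\ad(S^{-1}u_{(1)})(y))(\ad(S^{-1}u_{(0)})(a))$ together with (i) and Lemma \ref{lem:OP}(iv) — the same computation you carry out — reading off (ii) via $\psi=\varepsilon$ or $\varphi=\varepsilon$ and (iii) directly; your $\pi_J^*$-equivariance shortcut for the $O_{K,\zeta}(L_J)$-part of (ii) is a valid but equivalent variant, and for (i) the paper simply reduces to $\BF$ and cites \cite[Prop.~1.2]{KMT}, which amounts to the PBW/braid-group verification you sketch.
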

\begin{proof}
The first formula of (i) is obvious.
To verify the second one it is sufficient to show the corresponding statement over $\BF$.
This can be proved similarly to 
 \cite[Proposition 1.2]{KMT}.
 (i) is proved.
 
Assume 
$\varphi\in O_{K,\zeta}(L_J)$, 
$\psi\in \tilde{U}_{K,\zeta}(\Gm_J)^\bigstar$, 
$u, a\in U_{K,\zeta}(\Gl_J)$, 
$y\in \tilde{U}_{K,\zeta}(\Gm_J)$.
Then we have
\begin{equation}
\label{eq:OP}
\langle\ad(u)(\psi\varphi), ya\rangle
=\sum_{(u)}
\langle
\ad(u_{(1)})(\psi),
y\rangle
\langle
\ad(u_{(0)})(\varphi),
a\rangle.
\end{equation}
Indeed we have
\begin{align*}
&\langle\ad(u)(\psi\varphi), ya\rangle
=
\langle\psi\varphi,
(\ad(S^{-1}u)(ya)\rangle
=
\sum_{(u)}
\langle\psi\varphi,
(\ad(S^{-1}u_{(1)})(y))(\ad(S^{-1}u_{(0})(a))\rangle
\\
=&
\sum_{(u)}
\langle\psi,
\ad(S^{-1}u_{(1)})(y)\rangle
\langle\varphi,
\ad(S^{-1}u_{(0})(a)\rangle
=\sum_{(u)}
\langle
\ad(u_{(1)})(\psi),
y\rangle
\langle
\ad(u_{(0)})(\varphi),
a\rangle
\end{align*}
by (i) and Lemma \ref{lem:OP} (iv).
Setting $\psi=1_{O_{K,\zeta}(P_J)}=\varepsilon$ in \eqref{eq:OP} we obtain
\[
\langle\ad(u)(\varphi), ya\rangle
=
\varepsilon(y)
\langle
\ad(u)(\varphi),
a\rangle.
\]
Hence $\ad(u)(\varphi)\in O_{K,\zeta}(L_J)$.
Similarly, setting $\varphi=1_{O_{K,\zeta}(P_J)}$ in \eqref{eq:OP} we obtain
$\ad(u)(\psi)\in \tilde{U}_{K,\zeta}(\Gm_J)^\bigstar$.
(ii) is proved.

By (ii) and Lemma \ref{lem:OP} (iv)
we also obtain (iii) from \eqref{eq:OP}.
\end{proof}
Applying the natural Hopf algebra homomorphism 
$O_{K,\zeta}(P_J)\to O_{K,\zeta}(B)$ induced by 
$U_{K,\zeta}(\Gb)\hookrightarrow
U_{K,\zeta}(\Gp_J)$ to
Lemma \ref{lem:OP} and Lemma \ref{lem:adLP} we obtain the following results.

\begin{lemma}
\label{lem:OPX}
Regard 
$O_{K,\zeta}(B_J)$, 
${\tilde{U}}_{K,\zeta}(\Gm_J)^\bigstar$
as subspaces of $U_{K,\zeta}(\Gb)^*$ via the embeddings
\begin{align}
\label{eq:embp0X}
O_{K,\zeta}(B_J)&\hookrightarrow U_{K,\zeta}(\Gb)^*
\qquad
(\langle\varphi, ya\rangle=\varepsilon(y)\varphi(a)),
\\
\label{eq:embpmX}
{\tilde{U}}_{K,\zeta}(\Gm_J)^\bigstar&\hookrightarrow U_{K,\zeta}(\Gb)^*
\qquad
(\langle\psi, ya\rangle=\psi(y)\varepsilon(a))
\end{align}
for $y\in {\tilde{U}}_{K,\zeta}(\Gm_J)$, $a\in {U}_{K,\zeta}(\Gb_J)$ (see \eqref{eq:trig-p}).
\begin{itemize}
\item[(i)]
The multiplication of $U_{K,\zeta}(\Gb)^*$ induces the isomorphism
\begin{equation}
\label{eq:embPPX}
{\tilde{U}}_{K,\zeta}(\Gm_J)^\bigstar
\otimes
O_{K,\zeta}(B_J)
\cong
O_{K,\zeta}(B)
\subset
U_{K,\zeta}(\Gb)^*
\end{equation}
of $K$-modules.
\item[(ii)]
\eqref{eq:embp0X} is a homomorphism of $K$-algebras.
\item[(iii)]
\eqref{eq:embpmX}  is a homomorphism of $K$-algebras, where ${\tilde{U}}_{K,\zeta}(\Gm_J)^\bigstar$ is regarded as a $K$-algebra via 
Lemma \ref{lem:OP} (iii).
\item[(iv)]
For $\varphi\in O_{K,\zeta}(B_J)$, 
$\psi\in
{\tilde{U}}_{K,\zeta}(\Gm_J)^\bigstar$ we have
\[
\langle\psi\varphi,ya\rangle
=
\langle\psi,y\rangle
\langle\varphi,a\rangle
\qquad(
y\in {\tilde{U}}_{K,\zeta}(\Gm_J),\;
a\in {U}_{K,\zeta}(\Gb_J))
\]
with respect to the multiplication of ${U}_{K,\zeta}(\Gb)^*$.
\end{itemize}
\end{lemma}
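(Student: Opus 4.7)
The plan is to prove Lemma \ref{lem:OPX} as a parabolic restriction of Lemma \ref{lem:OP}, exploiting the two key structural properties that $U_{K,\zeta}(\Gb_J)$ is a Hopf subalgebra of $U_{K,\zeta}(\Gb)$ (so $\Delta(a)\in U_{K,\zeta}(\Gb_J)\otimes U_{K,\zeta}(\Gb_J)$ for $a\in U_{K,\zeta}(\Gb_J)$) and that $\Delta(\tilde{U}_{K,\zeta}(\Gm_J))\subset U_{K,\zeta}(\Gb)\otimes\tilde{U}_{K,\zeta}(\Gm_J)$ by \eqref{eq:mJ-comult}.

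Part (iv) is the Sweedler-notation computation carried out exactly as in Lemma \ref{lem:OP}(iv): one writes $\Delta(ya)=\sum y_{(0)}a_{(0)}\otimes y_{(1)}a_{(1)}$ with $y_{(0)}\in U_{K,\zeta}(\Gb)$, $y_{(1)}\in\tilde{U}_{K,\zeta}(\Gm_J)$, $a_{(i)}\in U_{K,\zeta}(\Gb_J)$, decomposes each $y_{(0)}$ via the triangular isomorphism \eqref{eq:trig-p}, and sees the expression collapse by the counit axiom to $\langle\psi,y\rangle\langle\varphi,a\rangle$.

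For parts (ii) and (iii) I would observe that the restriction of $\pi_J\colon U_{K,\zeta}(\Gp_J)\to U_{K,\zeta}(\Gl_J)$ to $U_{K,\zeta}(\Gb)$ takes values in $U_{K,\zeta}(\Gb_J)$ and coincides with the map $\pi\colon ya\mapsto\varepsilon(y)a$ that underlies \eqref{eq:embp0X}, so $\pi$ is itself a Hopf algebra homomorphism. Then \eqref{eq:embp0X} factors as the composite $O_{K,\zeta}(B_J)\hookrightarrow U_{K,\zeta}(\Gb_J)^*\xrightarrow{\pi^*}U_{K,\zeta}(\Gb)^*$ of two algebra homomorphisms, giving (ii); and \eqref{eq:embpmX} factors as the composite of \eqref{eq:embpm} with the algebra homomorphism $U_{K,\zeta}(\Gp_J)^*\to U_{K,\zeta}(\Gb)^*$ dual to the coalgebra inclusion $U_{K,\zeta}(\Gb)\hookrightarrow U_{K,\zeta}(\Gp_J)$, giving (iii).

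The main work is part (i). Injectivity of the multiplication map follows from (iv) by the same duality argument used for Lemma \ref{lem:OP}(i), and the image lies in $O_{K,\zeta}(B)$ because each factor does: the $\tilde{U}_{K,\zeta}(\Gm_J)^\bigstar$ factor arises by restriction from $O_{K,\zeta}(P_J)$ via Lemma \ref{lem:OP}(i), and the image of $O_{K,\zeta}(B_J)$ under \eqref{eq:embp0X} consists of pullbacks along the Hopf algebra map $\pi$ of integrable matrix coefficients. For the reverse inclusion I would combine Lemma \ref{lem:OB}(i) applied to $\Gb\subset\Gg$ with the graded-dual factorization $\tilde{U}_{K,\zeta}(\Gn)^\bigstar\cong\tilde{U}_{K,\zeta}(\Gm_J)^\bigstar\otimes\tilde{U}_{K,\zeta}(\Gn_J)^\bigstar$ coming from $\tilde{U}_{K,\zeta}(\Gn)\cong\tilde{U}_{K,\zeta}(\Gm_J)\otimes\tilde{U}_{K,\zeta}(\Gn_J)$ (\cite[Proposition 2.10]{TM}), and Lemma \ref{lem:OB}(i) applied to the root datum $\CR_J$, which yields $O_{K,\zeta}(B_J)\cong\tilde{U}_{K,\zeta}(\Gn_J)^\bigstar\otimes O_{K,\zeta}(H)$. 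This chain of isomorphisms produces a $K$-module identification $O_{K,\zeta}(B)\cong\tilde{U}_{K,\zeta}(\Gm_J)^\bigstar\otimes O_{K,\zeta}(B_J)$ of the correct graded dimension. The main obstacle is checking that this abstract identification matches multiplication inside $U_{K,\zeta}(\Gb)^*$; this reduces, via (iv) above and the multiplication formula Lemma \ref{lem:OB}(iii), to a direct comparison of the two triangular presentations of $O_{K,\zeta}(B)$ on corresponding weight components.
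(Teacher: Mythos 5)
Your handling of parts (ii), (iii) and (iv) is sound and is essentially what the paper intends: the paper presents Lemma \ref{lem:OPX} as an immediate corollary of Lemma \ref{lem:OP} and Lemma \ref{lem:adLP}, obtained by applying the surjective Hopf algebra homomorphism $\res\colon O_{K,\zeta}(P_J)\to O_{K,\zeta}(B)$ induced by $U_{K,\zeta}(\Gb)\hookrightarrow U_{K,\zeta}(\Gp_J)$. Your observation that the map $ya\mapsto\varepsilon(y)a$ underlying \eqref{eq:embp0X} is just $\pi_J$ restricted to $U_{K,\zeta}(\Gb)$, together with the resulting factorizations and the Sweedler computation for (iv), all amount to spelling out exactly this restriction.

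Part (i), however, contains a genuine gap, which you yourself flag. The injectivity and the containment of the image in $O_{K,\zeta}(B)$ are fine, but for the reverse inclusion you chain three $K$-module identifications (Lemma \ref{lem:OB}(i) for $\Gb\subset\Gg$, a graded-dual splitting $\tilde{U}_{K,\zeta}(\Gn)^\bigstar\cong\tilde{U}_{K,\zeta}(\Gm_J)^\bigstar\otimes\tilde{U}_{K,\zeta}(\Gn_J)^\bigstar$, and Lemma \ref{lem:OB}(i) for $\CR_J$) and then must show that the composite is the multiplication inside $U_{K,\zeta}(\Gb)^*$. That is not for free: the graded-dual splitting is a priori only a $K$-module isomorphism, and matching it to multiplication requires a further invocation of (iv) plus Lemma \ref{lem:OB}(iii) at two levels, a verification you announce as ``the main obstacle'' but do not carry out. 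The far shorter route is the one the paper uses implicitly: by Lemma \ref{lem:OP}(iv), for $\psi\in\tilde{U}_{K,\zeta}(\Gm_J)^\bigstar$, $\varphi\in O_{K,\zeta}(L_J)$ and $a\in U_{K,\zeta}(\Gb_J)\subset U_{K,\zeta}(\Gl_J)$ one has $\langle\psi\varphi,ya\rangle=\langle\psi,y\rangle\langle\varphi,a\rangle$, so the surjection $\res\colon O_{K,\zeta}(P_J)\to O_{K,\zeta}(B)$ carries the decomposition \eqref{eq:embPP} to $\id\otimes\res$ followed by the multiplication map of your (i). Since $\res\colon O_{K,\zeta}(L_J)\to O_{K,\zeta}(B_J)$ is surjective (it is the $\CR_J$-analogue of $O_{K,\zeta}(G)\to O_{K,\zeta}(B)$), the multiplication map hits all of $O_{K,\zeta}(B)$, and with your injectivity from (iv) you are done --- no dimension count or second triangular decomposition is needed.
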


\begin{lemma}
\label{lem:adLPX}
\begin{itemize}
\item[(i)]
We have
\begin{align*}
\ad(U_{K,\zeta}(\Gb_J))({U}_{K,\zeta}(\Gb_J))
&\subset
{U}_{K,\zeta}(\Gb_J),
\\
\ad(U_{K,\zeta}(\Gb_J))(\tilde{U}_{K,\zeta}(\Gm_J))
&\subset
\tilde{U}_{K,\zeta}(\Gm_J)
\end{align*}
with respect to the adjoint action of $U_{K,\zeta}(\Gb)$ on $U_{K,\zeta}(\Gb)$ 
\item[(ii)]
We have
\begin{align*}
\ad(U_{K,\zeta}(\Gb_J))
(O_{K,\zeta}(B_J))
\subset&
O_{K,\zeta}(B_J),
\\
\ad(U_{K,\zeta}(\Gb_J))(\tilde{U}_{K,\zeta}(\Gm_J)^\bigstar)
\subset&
\tilde{U}_{K,\zeta}(\Gm_J)^\bigstar
\end{align*}
with respect to the adjoint action of $U_{K,\zeta}(\Gb)$ on $O_{K,\zeta}(B)$
\item[(iii)]
For $\varphi\in O_{K,\zeta}(B_J)\subset O_{K,\zeta}(B)$, 
$\psi\in \tilde{U}_{K,\zeta}(\Gm_J)^\bigstar\subset O_{K,\zeta}(B)$ we have
\[
\ad(u)(\psi\varphi)
=\sum_{(u)}(\ad(u_{(1)})(\psi))(\ad(u_{(0)})(\varphi))
\qquad(u\in U_{K,\zeta}(\Gb_J)).
\]
\end{itemize}
\end{lemma}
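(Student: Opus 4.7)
My plan is to deduce Lemma \ref{lem:adLPX} from Lemma \ref{lem:adLP} by transporting all three assertions along the natural surjection $\pi\colon O_{K,\zeta}(P_J)\twoheadrightarrow O_{K,\zeta}(B)$ of Hopf algebras dual to the inclusion $U_{K,\zeta}(\Gb)\hookrightarrow U_{K,\zeta}(\Gp_J)$. The key compatibility is that $\pi$ is equivariant for the restricted adjoint action of $U_{K,\zeta}(\Gb)$: for $u\in U_{K,\zeta}(\Gb)$ and $\varphi\in O_{K,\zeta}(P_J)$ one has $\pi(\ad(u)(\varphi))=\ad(u)(\pi(\varphi))$, which is immediate from the definition of $\ad$ via the bimodule structure together with the fact that $\pi$ is simply restriction of functionals to $U_{K,\zeta}(\Gb)$. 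A preliminary check on the explicit pairings in Lemma \ref{lem:OP} and Lemma \ref{lem:OPX} shows that $\pi$ acts as the identity on the $\tilde{U}_{K,\zeta}(\Gm_J)^\bigstar$-factor of the respective triangular decompositions and restricts on $O_{K,\zeta}(L_J)\subset O_{K,\zeta}(P_J)$ to the Hopf surjection $O_{K,\zeta}(L_J)\twoheadrightarrow O_{K,\zeta}(B_J)$ dual to $U_{K,\zeta}(\Gb_J)\hookrightarrow U_{K,\zeta}(\Gl_J)$.

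For part (i), the second inclusion is inherited directly from Lemma \ref{lem:adLP}(i) together with $U_{K,\zeta}(\Gb_J)\subset U_{K,\zeta}(\Gl_J)$. For the first inclusion, I observe that $U_{K,\zeta}(\Gb_J)$ is itself a Hopf subalgebra of $U_{K,\zeta}(\Gg)$ (the coproducts of its generators $k_\gamma$ and $f_j^{(n)}$ with $j\in J$ stay inside $U_{K,\zeta}(\Gb_J)\otimes U_{K,\zeta}(\Gb_J)$), hence the adjoint action of a Hopf algebra on itself preserves it tautologically.

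For part (ii), both stabilities follow by applying $\pi$ to Lemma \ref{lem:adLP}(ii) and using $U_{K,\zeta}(\Gb_J)$-equivariance: stability of $\tilde{U}_{K,\zeta}(\Gm_J)^\bigstar\subset O_{K,\zeta}(B)$ is automatic since $\pi$ is the identity on that factor, and stability of $O_{K,\zeta}(B_J)\subset O_{K,\zeta}(B)$ follows from the identity $\pi(O_{K,\zeta}(L_J))=O_{K,\zeta}(B_J)$ combined with $\ad(U_{K,\zeta}(\Gb_J))(O_{K,\zeta}(L_J))\subset O_{K,\zeta}(L_J)$. Part (iii) is then obtained by applying $\pi$ to the identity in Lemma \ref{lem:adLP}(iii) for $u\in U_{K,\zeta}(\Gb_J)$: because $\pi$ is a $K$-algebra homomorphism intertwining $\ad$, both the product on the left-hand side and the coproduct sum on the right-hand side are preserved. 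I do not anticipate any serious obstacle; the only step requiring some care is the verification that $\pi$ has the expected form on the two factors of the triangular decompositions, which reduces to unwinding the embeddings \eqref{eq:embp0}, \eqref{eq:embpm}, \eqref{eq:embp0X} and \eqref{eq:embpmX}.
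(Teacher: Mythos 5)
Your proof is correct and follows the same route as the paper: the paper proves Lemma \ref{lem:adLPX} in a single sentence, saying exactly that it is obtained by applying the Hopf algebra homomorphism $O_{K,\zeta}(P_J)\to O_{K,\zeta}(B)$ (dual to $U_{K,\zeta}(\Gb)\hookrightarrow U_{K,\zeta}(\Gp_J)$) to Lemmas \ref{lem:OP} and \ref{lem:adLP}. You have merely unwound the verifications (the $\ad$-equivariance of $\pi$, its behaviour on the two factors of the triangular decompositions, the Hopf-subalgebra observation for part (i)) that the paper leaves implicit.
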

\section{Induction functor}
\subsection{}
We fix  $\rho\in\Gh^*$ satisfying
$\langle\rho,\alpha_i^\vee\rangle=1$ for any $i\in I$. 
We  define a shifted action 
$W\times \Gh^*\to \Gh^*$ ($(w,\lambda)\mapsto w\circ\lambda$) 
of $W$ on $\Gh^*$ by
\[
w\circ\lambda
=w(\lambda+\rho)-\rho
\qquad(w\in W,\; \lambda\in \Gh^*).
\]
This action does not depend on the choice of $\rho$ and preserves $Q$ and $\Lambda$.

We set
\begin{equation}
h_G=
\max_{\alpha\in\Delta^+}\langle\rho,\alpha^\vee\rangle+1.
\end{equation}
This is the largest number among the Coxeter numbers of the irreducible components of $\Delta$.
For $X\subset \Delta^+$ we set $\lambda_X=-\sum_{\alpha\in X}\alpha$.
It is well-known that
\begin{equation}
|\langle\lambda_X+\rho,\alpha^\vee\rangle|\leqq h_G-1
\qquad(\alpha\in\Delta)
\end{equation}
(see, for example, \cite[Section 2.2]{AJ}).
By Kostant \cite[Section 5.10]{Kos} we have the following.
\begin{lemma}
\label{lem:Kos}
\begin{itemize}
\item[(i)]
For $X\subset\Delta^+$
we have $W\circ\lambda_X\cap \Lambda^+\ne\emptyset$ if and only if  $W\circ\lambda_X\cap \Lambda^+=\{0\}$.
\item[(ii)]
Denote by $\CX$ the set of $X\subset\Delta^+$
satisfying 
$W\circ\lambda_X\cap \Lambda^+=\{0\}$.
Then we have a bijection $W\to \CX$ given by
$w\mapsto \Delta^+\cap (-w^{-1}\Delta^+)$.
\end{itemize}
\end{lemma}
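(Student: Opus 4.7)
The plan is to combine the inversion-set identity $\rho-w^{-1}\rho=\sum_{\alpha\in X_w}\alpha$ (where $X_w=\Delta^+\cap(-w^{-1}\Delta^+)$) with the height bound $|\langle\lambda_X+\rho,\alpha^\vee\rangle|\leq h_G-1$ quoted above the lemma. I describe the argument assuming $\Delta$ is irreducible; the reductive case follows by applying it to each irreducible component.

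First, the map $w\mapsto X_w$ sends $W$ into $\CX$: the inversion-set identity gives $\lambda_{X_w}+\rho=w^{-1}\rho$, so $w\circ\lambda_{X_w}=w(w^{-1}\rho)-\rho=0\in\Lambda^+$. Injectivity is classical, since $w$ is determined by its inversion set.

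For the nontrivial direction of (i), assume $w\circ\lambda_X\in\Lambda^+$ and set $\mu=w(\lambda_X+\rho)$. Strict dominance of $\mu$ gives $\langle\mu,\alpha_i^\vee\rangle\geq 1$ for every simple $\alpha_i$, while the $W$-invariance of the bilinear form promotes the height bound to $\langle\mu,\alpha^\vee\rangle\leq h_G-1$ for every $\alpha\in\Delta$. Choose a positive root $\alpha_0$ with $\langle\rho,\alpha_0^\vee\rangle=h_G-1$ and write $\alpha_0^\vee=\sum_i m_i\alpha_i^\vee$; then $\sum_i m_i=h_G-1$, and
\[
h_G-1=\sum_i m_i\leq\langle\mu,\alpha_0^\vee\rangle\leq h_G-1
\]
forces $\langle\mu,\alpha_i^\vee\rangle=1$ for every $i$ with $m_i>0$, which covers all $i$ in the irreducible case. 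Hence $\mu-\rho$ pairs trivially with every simple coroot, and since $\mu-\rho=w\lambda_X+(w\rho-\rho)\in Q$ lies in the span of coroots, $\mu=\rho$, i.e., $w\circ\lambda_X=0$.

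For surjectivity of $w\mapsto X_w$ in (ii), take $X\in\CX$; by (i) there is $w\in W$ with $w\circ\lambda_X=0$, and $w$ is unique by the regularity of $\rho$. Thus $\lambda_X=w^{-1}\rho-\rho=\lambda_{X_w}$, which says only that $\sum_{\alpha\in X}\alpha=\sum_{\alpha\in X_w}\alpha$. The genuine mathematical content is the remaining step $X=X_w$, since distinct subsets of $\Delta^+$ can share the same sum. I would invoke Kostant \cite[Sec.\ 5.10]{Kos}: the Weyl denominator identity $\sum_{X\subset\Delta^+}(-1)^{|X|}e^{\lambda_X}=\sum_{w\in W}(-1)^{\ell(w)}e^{w\circ 0}$, combined with pairwise distinctness of the weights $w\circ 0$ (regularity of $\rho$), forces for each $w$ a unique subset $X$ with $\lambda_X=w\circ 0$, namely $X=X_w$. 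This multiplicity-one statement is the hardest ingredient; the rest is bookkeeping.
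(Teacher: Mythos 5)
The paper itself supplies no proof of Lemma \ref{lem:Kos}; it simply cites Kostant \cite[Section 5.10]{Kos}. So your task is to fill in the argument, and most of what you write does that correctly. Your proof of (i) is a clean deduction from the height bound: pairing the strictly dominant $\mu=w(\lambda_X+\rho)$ against the highest coroot $\alpha_0^\vee=\sum_i m_i\alpha_i^\vee$ (all $m_i>0$ in the irreducible case) squeezes $\langle\mu,\alpha_i^\vee\rangle=1$ for all $i$, and since $\mu-\rho\in Q$ lies in the span of the roots, nondegeneracy of the pairing forces $\mu=\rho$. Injectivity of $w\mapsto X_w$ via inversion sets is also fine.

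The gap is in your surjectivity argument, specifically the final uniqueness step $\lambda_X=\lambda_{X_w}\Rightarrow X=X_w$. You claim the Weyl denominator identity
\[
\sum_{X\subset\Delta^+}(-1)^{|X|}e^{\lambda_X}=\sum_{w\in W}(-1)^{\ell(w)}e^{w\circ 0}
\]
together with distinctness of the $w\circ 0$ ``forces for each $w$ a unique subset $X$ with $\lambda_X=w\circ 0$.'' That implication is false: matching coefficients at $\nu=w\circ 0$ gives only the \emph{signed} count
\[
\sum_{X:\,\lambda_X=\nu}(-1)^{|X|}=(-1)^{\ell(w)},
\]
which is entirely compatible with several $X$ of mixed parity sharing the same $\lambda_X$. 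Indeed collisions of exactly this type do occur: in $A_2$ both $\{\alpha_1+\alpha_2\}$ and $\{\alpha_1,\alpha_2\}$ have $\lambda_X=-\alpha_1-\alpha_2$ (the signed contributions just cancel because $-\alpha_1-\alpha_2$ is not $w\circ 0$). So the denominator identity alone cannot deliver the multiplicity-one statement. A correct argument is Kostant's Lemma 5.9, which one can prove by induction on $\ell(w)$: if $w\ne e$, choose a simple $\alpha_i$ with $w\alpha_i\in\Delta^-$; assuming $\alpha_i\notin X$ leads, after applying $s_i$, to a subset $s_iX\cup\{\alpha_i\}\subset\Delta^+$ with sum $\rho-(ws_i)^{-1}\rho$, so by the inductive hypothesis $s_iX\cup\{\alpha_i\}=X_{ws_i}$; but $\alpha_i\notin X_{ws_i}$, a contradiction. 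Hence $\alpha_i\in X$, and $s_i(X\setminus\{\alpha_i\})$ has sum $\rho-(ws_i)^{-1}\rho$, giving $X=s_iX_{ws_i}\cup\{\alpha_i\}=X_w$. You correctly identified this as the hardest ingredient, but the mechanism you proposed for it does not work.
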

\subsection{}
For $J_1\subset J_2\subset I$ 
we define a left exact functor
\begin{equation}
\Ind^{\Gp_{J_2},\Gp_{J_1}}:
\Mod_\inte(U_{K,\zeta}(\Gp_{J_1}))\to
\Mod_\inte(U_{K,\zeta}(\Gp_{J_2}))
\end{equation}
as follows.
Let $M\in \Mod_\inte(U_{K,\zeta}(\Gp_{J_1}))$.
We consider 
$\Hom_{U_{K,\zeta}(\Gp_{J_1})}
(U_{K,\zeta}(\Gp_{J_2}),M)$, 
where $U_{K,\zeta}(\Gp_{J_2})$ is regarded as a left 
$U_{K,\zeta}(\Gp_{J_1})$-module by the left multiplication.
It is a left $U_{K,\zeta}(\Gp_{J_2})$-module by
\[
(u\cdot \theta)(u')=\theta(u'u)
\qquad
(u, u'\in U_{K,\zeta}(\Gp_{J_2}), 
\theta\in \Hom_{U_{K,\zeta}(\Gp_{J_1})}
(U_{K,\zeta}(\Gp_{J_2}),M)).
\]
Then we define $\Ind^{\Gp_{J_2},\Gp_{J_1}}(M)$ to be the largest integrable 
$U_{K,\zeta}(\Gp_{J_2})$-submodule of 
$
\Hom_{U_{K,\zeta}(\Gp_{J_1})}
(U_{K,\zeta}(\Gp_{J_2}),M)
$.
For $i\geqq0$ we denote by
\begin{equation}
R^i\Ind^{\Gp_{J_2},\Gp_{J_1}}:
\Mod_\inte(U_{K,\zeta}(\Gp_{J_1}))\to
\Mod_\inte(U_{K,\zeta}(\Gp_{J_2}))
\end{equation}
the $i$-th derived functor of 
$\Ind^{\Gp_{J_2},\Gp_{J_1}}$.
\begin{remark}
For $M\in \Mod_\inte(U_{K,\zeta}(\Gp_{J_1}))$
define two left $U_{K,\zeta}(\Gp_{J_1})$-module structures of $O_{K,\zeta}(P_{J_2})\otimes M$ by
\begin{align*}
y\bullet(\varphi\otimes m)
=&
\sum_{(y)}\varphi\cdot(S^{-1}y_{(0)})\otimes y_{(1)}m,
\\
y\bullet'(\varphi\otimes m)
=&
\sum_{(y)}\varphi\cdot(Sy_{(1)})\otimes y_{(0)}m
\end{align*}
for $y\in U_{K,\zeta}(\Gp_{J_1})$, $\varphi\in O_{K,\zeta}(P_{J_2})$, $m\in M$.
Identifying $O_{K,\zeta}(P_{J_2})\otimes M$  with a subspace of 
$\Hom_{K}
(U_{K,\zeta}(\Gp_{J_2}),M)$ in a natural manner the following three conditions on $\theta\in\Hom_{K}
(U_{K,\zeta}(\Gp_{J_2}),M)$ are equivalent:
\begin{itemize}
\item[(a)]
$\theta\in\Ind^{\Gp_{J_2},\Gp_{J_1}}(M)$,
\item[(b)]
$\theta\in O_{K,\zeta}(P_{J_2})\otimes M$, and $y\bullet \theta=\varepsilon(y)\theta$ for any $y\in U_{K,\zeta}(\Gp_{J_1})$,
\item[(c)]
$\theta\in O_{K,\zeta}(P_{J_2})\otimes M$, and $y\bullet' \theta=\varepsilon(y)\theta$ for any $y\in U_{K,\zeta}(\Gp_{J_1})$.
\end{itemize}
\end{remark}

We have the transitivity
\begin{equation}
\label{eq:Ind-trans}
\Ind^{\Gp_{J_3},\Gp_{J_2}}
\circ
\Ind^{\Gp_{J_2},\Gp_{J_1}}
=
\Ind^{\Gp_{J_3},\Gp_{J_1}}
\qquad(J_1\subset J_2\subset J_3\subset I).
\end{equation}

We will also use the following standard result (see \cite{APW}).
\begin{lemma}
\label{lem:TI}
Let $J_1\subset J_2\subset I$.
For $V\in\Mod_\inte(U_{K,\zeta}(\Gp_{J_2}))$ and $M\in\Mod_\inte(U_{K,\zeta}(\Gp_{J_1}))$
we have canonical isomorphisms
\begin{align*}
V{\otimes}R^i\Ind^{\Gp_{J_2},\Gp_{J_1}}(M)
\cong&
R^i\Ind^{\Gp_{J_2},\Gp_{J_1}}(V{\otimes}M),
\\
R^i\Ind^{\Gp_{J_2},\Gp_{J_1}}(M)\otimes V
\cong&
R^i\Ind^{\Gp_{J_2},\Gp_{J_1}}(M{\otimes}V)
\end{align*}
of $U_{K,\zeta}(\Gp_{J_2})$-modules 
for any $i$.
In the case $i=0$ the isomorphisms are given by
\begin{align*}
V{\otimes}\Ind^{\Gp_{J_2},\Gp_{J_1}}(M)
\cong&
\Ind^{\Gp_{J_2},\Gp_{J_1}}(V{\otimes}M)
\quad&
(v\otimes\theta\mapsto[u\mapsto
\sum_{(u)}u_{(0)}v\otimes\theta(u_{(1)})]),
\\
\Ind^{\Gp_{J_2},\Gp_{J_1}}(M)\otimes V
\cong&
\Ind^{\Gp_{J_2},\Gp_{J_1}}(M{\otimes}V)
\quad&
(\theta\otimes v\mapsto
[u\mapsto
\sum_{(u)}\theta(u_{(0)})\otimes u_{(1)}v]).
\end{align*}
\end{lemma}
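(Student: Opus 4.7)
The plan is to establish the $i=0$ isomorphism by direct Sweedler-notation verification, then bootstrap to $i>0$ via an injective resolution argument that exploits the exactness of $V\otimes_K-$ together with the fact that tensoring with an integrable module preserves injectivity in the category of integrable modules.

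For the case $i=0$, I would take the stated formula $v\otimes\theta\mapsto[u\mapsto\sum_{(u)}u_{(0)}v\otimes\theta(u_{(1)})]$ and check three things: that the image is $U_{K,\zeta}(\Gp_{J_1})$-linear in $u$ (which reduces to the Hopf identity $\sum(zu)_{(0)}\otimes(zu)_{(1)}=\sum z_{(0)}u_{(0)}\otimes z_{(1)}u_{(1)}$ combined with the $U_{K,\zeta}(\Gp_{J_1})$-linearity of $\theta$), that the whole map is $U_{K,\zeta}(\Gp_{J_2})$-linear (essentially the same calculation on the outside), and that the image lies in the integrable part (from integrability of $V$ and of $\Ind(M)$). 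Bijectivity would be verified by constructing an explicit inverse: given $\theta'\in\Ind(V\otimes M)$, one applies an antipode twist on the $V$-factor of $\theta'(u)$ to extract a finite sum $\sum v_i\otimes\theta_i$. The symmetric case with $V$ on the right is handled identically after swapping the roles of the Sweedler indices.

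For general $i$, I would choose an injective resolution $M\hookrightarrow I^{\bullet}$ in $\Mod_\inte(U_{K,\zeta}(\Gp_{J_1}))$. Since $V\otimes_K-$ is exact on $K$-modules and the diagonal action preserves integrability, $V\otimes I^{\bullet}$ is a resolution of $V\otimes M$ by integrable modules. The key technical claim, which I expect to be the main obstacle, is the following: \emph{for any $V\in\Mod_\inte(U_{K,\zeta}(\Gp_{J_1}))$ and any injective $I$ in this category, $V\otimes I$ is again injective.} I would prove it via the Hopf shuffle isomorphism applied in the equivalent comodule picture $\Comod(O_{K,\zeta}(P_{J_1}))$: any injective embeds as a direct summand of a cofree comodule $W\otimes O_{K,\zeta}(P_{J_1})$, and the map
\[
V\otimes\bigl(W\otimes O_{K,\zeta}(P_{J_1})\bigr)\;\longrightarrow\;(V\otimes W)\otimes O_{K,\zeta}(P_{J_1}),\qquad v\otimes w\otimes f\;\longmapsto\;\sum_{(v)}v_{(0)}\otimes w\otimes v_{(1)}f,
\]
is an isomorphism of right $O_{K,\zeta}(P_{J_1})$-comodules from the diagonal coaction on the left to the cofree coaction on the right, with inverse given by the analogous formula using the antipode. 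Hence $V\otimes I$ is a summand of a cofree comodule, therefore injective.

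Given the key claim, the computation is formal:
\[
R^i\Ind^{\Gp_{J_2},\Gp_{J_1}}(V\otimes M)=H^i\bigl(\Ind^{\Gp_{J_2},\Gp_{J_1}}(V\otimes I^{\bullet})\bigr)\cong H^i\bigl(V\otimes\Ind^{\Gp_{J_2},\Gp_{J_1}}(I^{\bullet})\bigr)\cong V\otimes R^i\Ind^{\Gp_{J_2},\Gp_{J_1}}(M),
\]
where the middle isomorphism applies the $i=0$ case termwise to the $\Ind$-acyclic complex $V\otimes I^{\bullet}$, and the last uses exactness of $V\otimes_K-$. The statement for $M\otimes V$ is handled symmetrically. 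Everything except the shuffle identity is formal derived-functor bookkeeping; the shuffle identity itself is a short direct computation with the comultiplication and antipode of $O_{K,\zeta}(P_{J_1})$.
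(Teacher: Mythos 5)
Your proof is correct. The paper gives no proof of this lemma and instead cites \cite{APW}; your argument (explicit Sweedler-notation check for $i=0$, then an injective resolution combined with the observation that injectives are summands of cofree comodules and that the shuffle isomorphism $V\otimes(W\otimes O)\cong(V\otimes W)\otimes O$ shows tensoring with an integrable module preserves injectivity) is precisely the standard tensor-identity proof found in that reference, so it is essentially the same approach.
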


\subsection{}
For $\lambda\in\Lambda$ 
we extend $\chi_\lambda:U_{K,\zeta}(\Gh)\to K$ to the algebra homomorphism
$\chi_\lambda:U_{K,\zeta}(\Gb)\to K$ by setting $\chi_\lambda(y)=\varepsilon(y)$ for $y\in \tilde{U}_{K,\zeta}(\Gn)$.
We denote by $K_\lambda=K1_\lambda$ the one-dimensional $U_{K,\zeta}(\Gb)$-module given by 
$y1_\lambda=\chi_\lambda(y)1_\lambda$.
It is well-known that for $\lambda\in\Lambda$ we have
\begin{equation}
\label{eq:IndOne}
\Ind^{\Gg,\Gb}(K_\lambda)
\cong
\begin{cases}
V^*_{K,\zeta}(\lambda)
\qquad&(\lambda\in\Lambda^+)
\\
0&(\lambda\notin\Lambda^+).
\end{cases}
\end{equation}
We have also the following analogue of the Borel-Weil-Bott theorem.
\begin{proposition}[\cite{APW}]
\label{prop:BBW0}
Assume that $\zeta\in K^\times$ is not a root of unity.
For $\lambda\in\Lambda$ we have the following.
\begin{itemize}
\item[(i)]
If $\langle\lambda+\rho,\alpha^\vee\rangle=0$ for some $\alpha\in\Delta$, then 
we have
$
R^i\Ind^{\Gg,\Gb}(K_\lambda)=0
$
for any $i$.
\item[(ii)]
Assume $\langle\lambda+\rho,\alpha^\vee\rangle\ne0$ for any $\alpha\in\Delta$.
For $w\in W$ such that $w\circ\lambda\in\Lambda^+$ we have
\[
R^i\Ind^{\Gg,\Gb}(K_\lambda)\cong
\begin{cases}
V^*_{K,\zeta}(w\circ\lambda)
\qquad&(i=\ell(w))
\\
0&(i\ne\ell(w)).
\end{cases}
\]
\end{itemize}
\end{proposition}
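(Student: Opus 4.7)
The plan is to execute the quantum analogue of Demazure's proof of the Bott theorem, as carried out by Andersen-Polo-Wen, by reducing to a rank-one calculation on quantum $\mathrm{SL}_2$ and iterating via transitivity of induction. For each simple root $\alpha_i$ let $\Gp_i = \Gp_{\{i\}}$ denote the corresponding minimal parabolic; by \eqref{eq:Ind-trans} we have $\Ind^{\Gg,\Gb} = \Ind^{\Gg,\Gp_i}\circ\Ind^{\Gp_i,\Gb}$, and once one checks that $\Ind^{\Gp_i,\Gb}$ sends injectives to $\Ind^{\Gg,\Gp_i}$-acyclics (standard for induction between Hopf algebras, since $\Ind^{\Gp_i,\Gb}$ has an exact left adjoint given by restriction), the Grothendieck spectral sequence
\[
E_2^{p,q} = R^p\Ind^{\Gg,\Gp_i}\bigl(R^q\Ind^{\Gp_i,\Gb}(K_\lambda)\bigr) \Longrightarrow R^{p+q}\Ind^{\Gg,\Gb}(K_\lambda)
\]
becomes available.

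The core calculation is the rank-one case. With $m = \langle\lambda,\alpha_i^\vee\rangle$, I expect three regimes: (a) if $m\geq 0$, then $R^q\Ind^{\Gp_i,\Gb}(K_\lambda)=0$ for $q\geq 1$ and $\Ind^{\Gp_i,\Gb}(K_\lambda)$ is the dual Weyl module for the Levi $\Gl_{\{i\}}$ of highest weight $\lambda$; (b) if $m=-1$, then $R^q\Ind^{\Gp_i,\Gb}(K_\lambda)=0$ for every $q$; (c) if $m\leq -2$, then $\Ind^{\Gp_i,\Gb}(K_\lambda)=0$, $R^1\Ind^{\Gp_i,\Gb}(K_\lambda)\cong\Ind^{\Gp_i,\Gb}(K_{s_i\circ\lambda})$, and $R^q\Ind^{\Gp_i,\Gb}(K_\lambda)=0$ for $q\geq 2$. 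I would establish this directly inside quantum $\mathrm{SL}_2$ using the PBW-type decomposition of $U_{K,\zeta}(\Gp_i)$ over $U_{K,\zeta}(\Gb)$, mimicking the classical computation of the cohomology of line bundles on the flag variety of $\mathrm{SL}_2$. The assumption that $\zeta$ is not a root of unity is essential, since it guarantees $[n]_\zeta\ne 0$ for all $n>0$ and eliminates the torsion phenomena that would otherwise appear.

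Next I would prove (ii) by induction on $\ell(w)$. The base case $\ell(w)=0$, i.e.\ $\lambda\in\Lambda^+$, is the already-quoted identification $\Ind^{\Gg,\Gb}(K_\lambda)\cong V^*_{K,\zeta}(\lambda)$ together with Kempf vanishing, which itself follows by iterating regime (a) through a chain of minimal parabolics. For the inductive step, regularity of $\lambda+\rho$ combined with $\ell(w)>0$ supplies a simple root $\alpha_i$ with $\ell(ws_i)=\ell(w)-1$; dominance of $w\circ\lambda$ then forces $\langle\lambda,\alpha_i^\vee\rangle\leq -2$, placing us in regime (c). This kills $E_2^{p,0}$ and $E_2^{p,q}$ for $q\geq 2$, so the spectral sequence collapses onto the $q=1$ row to give
\[
R^{p+1}\Ind^{\Gg,\Gb}(K_\lambda)\cong R^p\Ind^{\Gg,\Gp_i}\bigl(\Ind^{\Gp_i,\Gb}(K_{s_i\circ\lambda})\bigr)\cong R^p\Ind^{\Gg,\Gb}(K_{s_i\circ\lambda}),
\]
where the second isomorphism uses transitivity again, together with the acyclicity from regime (a) applied to $s_i\circ\lambda$. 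Since $(ws_i)\circ(s_i\circ\lambda)=w\circ\lambda$, the inductive hypothesis applied with the shorter element $ws_i$ gives concentration at $p=\ell(w)-1$ with value $V^*_{K,\zeta}(w\circ\lambda)$, whence the desired concentration at $p+1=\ell(w)$.

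Part (i) follows by the same machinery: if $\lambda+\rho$ lies on some reflection hyperplane, a chain of reflections analogous to the one used in (ii) moves $\lambda$ to a weight $\mu$ satisfying $\langle\mu,\alpha_i^\vee\rangle=-1$ for some simple $\alpha_i$; regime (b) forces every $R^q\Ind^{\Gp_i,\Gb}(K_\mu)$ to vanish, whence $R^q\Ind^{\Gg,\Gb}(K_\mu)=0$ for all $q$ by the spectral sequence, and the reflection chain transports this vanishing back to $\lambda$. The main obstacle will be the rank-one calculation, specifically the functorial identification $R^1\Ind^{\Gp_i,\Gb}(K_\lambda)\cong\Ind^{\Gp_i,\Gb}(K_{s_i\circ\lambda})$ of regime (c) and its compatibility with the chain of spectral sequences needed in the induction; I would approach this via a two-term Koszul-type resolution of $K_\lambda$ as a $U_{K,\zeta}(\Gb)$-module inside $U_{K,\zeta}(\Gp_i)$, from which the shift emerges as the connecting homomorphism of the associated long exact sequence.
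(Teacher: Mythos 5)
The paper does not prove this proposition; it is quoted from Andersen--Polo--Wen \cite{APW}, where it is established exactly by the Demazure-style reduction you outline, so your proposal is a correct reconstruction of the cited argument rather than an alternative. The three-regime rank-one computation, the Grothendieck spectral sequence for $\Ind^{\Gg,\Gb}=\Ind^{\Gg,\Gp_i}\circ\Ind^{\Gp_i,\Gb}$ (justified because $\Ind^{\Gp_i,\Gb}$ preserves injectives, having an exact left adjoint), and the induction on $\ell(w)$ using the shift $R^{p+1}\Ind^{\Gg,\Gb}(K_\lambda)\cong R^p\Ind^{\Gg,\Gb}(K_{s_i\circ\lambda})$ when $\langle\lambda,\alpha_i^\vee\rangle\leqq -2$ all match the source. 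Your deduction that $\ell(ws_i)=\ell(w)-1$ together with $w\circ\lambda\in\Lambda^+$ regular forces $\langle\lambda,\alpha_i^\vee\rangle\leqq -2$ is the right bookkeeping, and the $s_i\circ\lambda$ direction falls into regime (a) since $\langle s_i\circ\lambda,\alpha_i^\vee\rangle=-\langle\lambda,\alpha_i^\vee\rangle-2\geqq0$, justifying the collapse of the second spectral sequence you invoke. For part (i) the argument needs the small observation that if $\lambda+\rho$ is dominant and singular then it already vanishes on some \emph{simple} coroot, which guarantees that the reflection chain terminates in regime (b); you gesture at this and it is correct. The only piece left genuinely unfinished is the rank-one identification $R^1\Ind^{\Gp_i,\Gb}(K_\lambda)\cong\Ind^{\Gp_i,\Gb}(K_{s_i\circ\lambda})$, which you propose to obtain from a two-term resolution; this is precisely the quantum $\mathfrak{sl}_2$ Serre-duality computation done in \cite{APW}, and the non-root-of-unity hypothesis is indeed what makes $[n]_{\zeta}\neq 0$ for all $n>0$ and keeps the rank-one picture torsion-free.
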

However, the behavior  of $R^i\Ind(K_\lambda)$ for   $i>0$ is not well-understood when $\zeta$ is a root of unity.
In this paper we will use the following result.\begin{proposition}[\cite{A}]
\label{prop:BBW}
Assume that $\zeta\in K^\times$ is a root of unity.
For $\alpha\in\Delta$ we denote by $\ell_\alpha$ the multiplicative order of $\zeta^{(\alpha,\alpha)}$.
Then for
$\lambda\in\Lambda$ satisfying
\[
|\langle\lambda+\rho,\alpha^\vee\rangle|
\leqq \ell_\alpha\qquad(\forall\alpha\in\Delta)
\]
 we have the following.
\begin{itemize}
\item[(i)]
If $\langle\lambda+\rho,\alpha^\vee\rangle=0$ for some $\alpha\in\Delta$, then 
we have
$
R^i\Ind^{\Gg,\Gb}(K_\lambda)=0
$
for any $i$.
\item[(ii)]
Assume $\langle\lambda+\rho,\alpha^\vee\rangle\ne0$ for any $\alpha\in\Delta$.
For $w\in W$ such that $w\circ\lambda\in\Lambda^+$ we have
\[
R^i\Ind^{\Gg,\Gb}(K_\lambda)\cong
\begin{cases}
V^*_{K,\zeta}(w\circ\lambda)
\qquad&(i=\ell(w))
\\
0&(i\ne\ell(w)).
\end{cases}
\]
\end{itemize}
\end{proposition}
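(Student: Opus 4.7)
The plan is to reduce to the rank-one case via parabolic induction in stages and then induct on $\ell(w)$. For each $i \in I$, consider the minimal parabolic $\Gp_{\{i\}}$ with Levi $\Gl_{\{i\}}$; the transitivity of induction \eqref{eq:Ind-trans} together with the associated Grothendieck spectral sequence
\[
E_2^{p,q} = R^p\Ind^{\Gg,\Gp_{\{i\}}}\bigl(R^q\Ind^{\Gp_{\{i\}},\Gb}(K_\lambda)\bigr)
\Rightarrow
R^{p+q}\Ind^{\Gg,\Gb}(K_\lambda)
\]
splits the computation into a rank-one step and an induction from a larger parabolic.

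The rank-one step is the technical heart. Writing $m = \langle\lambda+\rho,\alpha_i^\vee\rangle$, the small-weight bound $|m| \leqq \ell_{\alpha_i}$ is precisely what rescues the classical pattern. I expect to verify by a direct computation inside $U_{K,\zeta}(\Gl_{\{i\}})$ (essentially quantum $\mathfrak{sl}_2$ at the root of unity $\zeta^{d_i}$) the following trichotomy: if $m > 0$ then $\Ind^{\Gp_{\{i\}},\Gb}(K_\lambda)$ is the dual Weyl module for $\Gl_{\{i\}}$ at highest weight $\lambda$ and all higher $R^q$ vanish; if $m = 0$ then every $R^q\Ind^{\Gp_{\{i\}},\Gb}(K_\lambda)$ vanishes; and if $m < 0$ then $R^q\Ind^{\Gp_{\{i\}},\Gb}(K_\lambda)$ vanishes except at $q=1$, in which case it is the dual Weyl module for $\Gl_{\{i\}}$ at highest weight $s_i\circ\lambda$. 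Each branch follows from an explicit PBW-style filtration on the induced module, and the small-weight hypothesis ensures that no divided-power relation at the root of unity produces an unwanted extension.

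For part (i), by iterated application of the rank-one analysis one reduces to the case where some simple $\alpha_i$ itself is singular for $\lambda+\rho$ (by conjugating through a sequence of simple reflections, always staying within the small-weight bound). Then case (b) above annihilates $R^q\Ind^{\Gp_{\{i\}},\Gb}(K_\lambda)$ for every $q$, so the spectral sequence collapses to zero.

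For part (ii), induct on $\ell(w)$. The base case $w=e$ is the well-known identification $\Ind^{\Gg,\Gb}(K_\lambda)\cong V^*_{K,\zeta}(\lambda)$ together with standard Kempf-type vanishing for the Lusztig form. For the inductive step, pick a simple reflection $s_i$ with $\ell(ws_i) = \ell(w)-1$; since $w\circ\lambda\in\Lambda^+$ one has $\langle\lambda+\rho,\alpha_i^\vee\rangle < 0$, so by case (c) of the rank-one analysis the sheet $R^q\Ind^{\Gp_{\{i\}},\Gb}(K_\lambda)$ is supported only in degree $q=1$, where it equals the dual Weyl module for $\Gl_{\{i\}}$ at $s_i\circ\lambda$. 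Applying $R^p\Ind^{\Gg,\Gp_{\{i\}}}$ and invoking the induction hypothesis at the weight $s_i\circ\lambda$ with the shorter Weyl group element $ws_i$ (the small-weight bound is preserved because $s_i$ permutes $\{\pm\alpha\}$ and $\ell_{s_i\alpha}=\ell_\alpha$), one gets $V^*_{K,\zeta}(w\circ\lambda)$ in the single bidegree $(p,q)=(\ell(w)-1,1)$, with $p+q=\ell(w)$; the spectral sequence degenerates because its $E_2$-page is concentrated on a single row. The main obstacle I anticipate is precisely the rank-one computation at the root of unity: one must show that the extra divided-power generators of the Lusztig form do not produce unwanted extensions of Weyl modules over $U_{K,\zeta}(\Gl_{\{i\}})$, and it is exactly the bound $|m|\leqq\ell_{\alpha_i}$ that guarantees this does not happen.
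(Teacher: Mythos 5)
The paper does not prove this proposition; it is quoted from Andersen's paper [A]. Your stepwise Demazure-type argument through minimal parabolics, with the rank-one computation as the engine, is indeed the standard approach taken there (and by Jantzen in the modular algebraic group setting), so in outline you are reproducing the cited proof.

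There is, however, an imprecision at the very place you flag as the main obstacle, and it is worth pinning down because it is exactly where the hypothesis $|\langle\lambda+\rho,\alpha^\vee\rangle|\leqq\ell_\alpha$ is used. When $m=\langle\lambda+\rho,\alpha_i^\vee\rangle<0$, what Serre duality on the quantum $\BP^1$ gives is $R^1\Ind^{\Gp_{\{i\}},\Gb}(K_\lambda)\cong\bigl(\Ind^{\Gp_{\{i\}},\Gb}(K_{s_i\circ\lambda})\bigr)^*$, i.e., the \emph{Weyl} module for $\Gl_{\{i\}}$ at $s_i\circ\lambda$, not the dual Weyl module $\Ind^{\Gp_{\{i\}},\Gb}(K_{s_i\circ\lambda})$ itself. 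Your inductive step invokes the hypothesis at $s_i\circ\lambda$ with the shorter element $ws_i$, which computes $R^p\Ind^{\Gg,\Gp_{\{i\}}}$ of the dual Weyl module, not of its dual. The two agree precisely when the rank-one module at $s_i\circ\lambda$ is simple, hence self-dual; and that is exactly what the small-weight bound buys, since $\langle s_i\circ\lambda,\alpha_i^\vee\rangle=|m|-1<\ell_{\alpha_i}$ places $s_i\circ\lambda$ in the closure of the fundamental alcove for quantum $\Gsl_2$ at $\zeta^{d_i}$. So the clean formulation of the rank-one lemma you need is: under the bound, $R^i\Ind^{\Gg,\Gb}(K_\lambda)\cong R^{i-1}\Ind^{\Gg,\Gb}(K_{s_i\circ\lambda})$ when $m<0$, and this is the statement that iterates; your gloss about ``extra divided-power generators producing unwanted extensions'' gestures at the reducibility of the rank-one Weyl module but does not name the actual mechanism (Serre duality plus first-alcove simplicity). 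The same identification is needed at each step of the chain in part (i), so the bound is used there as well, not only in (ii).
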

\subsection{}
Let $J\subset I$.
We have the induction functor
\begin{align*}
\Ind^{\Gl_J,\Gb_J}:
\Mod_\inte(U_{K,\zeta}(\Gb_J))
\to
\Mod_\inte(U_{K,\zeta}(\Gl_J))
\end{align*}
similarly to 
 $\Ind^{\Gg,\Gb}$
 (see \eqref{eq:CRJ}).

We will use the following fact later.
\begin{lemma}
\label{lem:IndPL}
We have the following commutative diagram of functors:
\[
\xymatrix@C=50pt{
\Mod_\inte(U_{K,\zeta}(\Gb))
\ar[d]
\ar[r]^{R^i\Ind^{\Gp_J,\Gb}}
&
\Mod_\inte(U_{K,\zeta}(\Gp_J))
\ar[d]
\\
\Mod_\inte(U_{K,\zeta}(\Gb_J))
\ar[r]^{R^i\Ind^{\Gl_J,\Gb_J}}
&
\Mod_\inte(U_{K,\zeta}(\Gl_J)),
}
\]
where the vertical arrows are given by the restriction.
\end{lemma}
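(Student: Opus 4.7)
The plan is to establish the isomorphism in degree zero via a bimodule decomposition and tensor-hom adjunction, and then promote it to all derived functors by a universal $\delta$-functor argument.

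First, I would prove the bimodule isomorphism
\[
U_{K,\zeta}(\Gp_J) \cong U_{K,\zeta}(\Gb) \otimes_{U_{K,\zeta}(\Gb_J)} U_{K,\zeta}(\Gl_J)
\]
as $(U_{K,\zeta}(\Gb), U_{K,\zeta}(\Gl_J))$-bimodules via the multiplication map $b \otimes a \mapsto ba$. This is an immediate consequence of \eqref{eq:trig-p}: writing $U_{K,\zeta}(\Gb) \cong \tilde{U}_{K,\zeta}(\Gm_J) \otimes U_{K,\zeta}(\Gb_J)$ as a right $U_{K,\zeta}(\Gb_J)$-module and tensoring on the right over $U_{K,\zeta}(\Gb_J)$ with $U_{K,\zeta}(\Gl_J)$ yields $\tilde{U}_{K,\zeta}(\Gm_J) \otimes U_{K,\zeta}(\Gl_J) \cong U_{K,\zeta}(\Gp_J)$, with the bimodule structure matching by construction.

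Tensor-hom adjunction then yields a natural left $U_{K,\zeta}(\Gl_J)$-linear isomorphism
\[
\Hom_{U_{K,\zeta}(\Gb)}(U_{K,\zeta}(\Gp_J), M) \cong \Hom_{U_{K,\zeta}(\Gb_J)}(U_{K,\zeta}(\Gl_J), M|_{U_{K,\zeta}(\Gb_J)})
\]
for $M \in \Mod_\inte(U_{K,\zeta}(\Gb))$. Passing to the largest integrable submodules on each side then yields the degree-zero case of the lemma. Concretely, a $U_{K,\zeta}(\Gb_J)$-linear map $\theta \colon U_{K,\zeta}(\Gl_J) \to M$ extends to $\tilde{\theta}(ba) = b\theta(a)$ for $b \in \tilde{U}_{K,\zeta}(\Gm_J)$, $a \in U_{K,\zeta}(\Gl_J)$; the forward inclusion of integrable parts is trivial, while for the reverse one uses the $U_{K,\zeta}(\Gb)$-integrability of $M$ together with the decomposition \eqref{eq:trig-p} to bound the $\tilde{U}_{K,\zeta}(\Gm_J)$-orbit of $\tilde{\theta}$ in terms of the finite-dimensional $U_{K,\zeta}(\Gl_J)$-orbit of $\theta$, so that the two notions of integrability coincide on the image of the adjunction.

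For $i > 0$ the argument is formal: both restriction functors $\Mod_\inte(U_{K,\zeta}(\Gb)) \to \Mod_\inte(U_{K,\zeta}(\Gb_J))$ and $\Mod_\inte(U_{K,\zeta}(\Gp_J)) \to \Mod_\inte(U_{K,\zeta}(\Gl_J))$ are exact, so that $R^i(\mathrm{Res} \circ \Ind^{\Gp_J,\Gb}) = \mathrm{Res} \circ R^i\Ind^{\Gp_J,\Gb}$ and, via a degenerate Grothendieck spectral sequence, $R^i(\Ind^{\Gl_J,\Gb_J} \circ \mathrm{Res}) = R^i\Ind^{\Gl_J,\Gb_J} \circ \mathrm{Res}$. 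The natural isomorphism of left-exact functors established above therefore promotes uniquely to a natural isomorphism of their right derived functors, giving the commutativity of the diagram in all degrees.

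The main obstacle is verifying the integrability matching in the degree-zero step: $U_{K,\zeta}(\Gp_J)$-integrability is a priori stronger than $U_{K,\zeta}(\Gl_J)$-integrability, so one must carefully exploit the $U_{K,\zeta}(\Gb)$-equivariance that determines $\tilde\theta$ from $\theta$, combined with the $U_{K,\zeta}(\Gb)$-integrability of $M$, to conclude that the two integrability conditions coincide on the image of the tensor-hom adjunction.
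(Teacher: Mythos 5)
Your proposal is correct and follows essentially the same route as the paper's proof. The paper establishes the Hom-isomorphism \eqref{eq:Hom-cong} directly from the $K$-module decompositions $U_{K,\zeta}(\Gp_J)\cong U_{K,\zeta}(\Gb)\otimes U_{K,\zeta}(\Gn_J^+)$ and $U_{K,\zeta}(\Gl_J)\cong U_{K,\zeta}(\Gb_J)\otimes U_{K,\zeta}(\Gn_J^+)$ (both sides become $\Hom_K(U_{K,\zeta}(\Gn_J^+),M)$); your bimodule factorization $U_{K,\zeta}(\Gp_J)\cong U_{K,\zeta}(\Gb)\otimes_{U_{K,\zeta}(\Gb_J)}U_{K,\zeta}(\Gl_J)$ plus tensor-hom adjunction is an equivalent repackaging of the same PBW-type decomposition. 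One small precision worth adding to your integrability step: since \eqref{eq:Hom-cong} is $U_{K,\zeta}(\Gl_J)$-equivariant, integrability of $\tilde\theta$ reduces to the single claim $f_i^{(n)}\tilde\theta=0$ for $n\gg0$ and $i\in I\setminus J$; this follows because $f_i^{(n)}$ commutes with $U_{K,\zeta}(\Gn_J^+)$ for $i\notin J$, giving $(f_i^{(n)}\tilde\theta)(ba)=bf_i^{(n)}\theta(a)$, and $\theta(U_{K,\zeta}(\Gn_J^+))\subset(U_{K,\zeta}(\Gl_J)\theta)(1)$ is finite-dimensional — a sharper and more concrete statement than ``bounding the $\tilde U_{K,\zeta}(\Gm_J)$-orbit.''
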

\begin{proof}
It is sufficient to show the statement for $i=0$.
Let $M\in\Mod_\inte(U_{K,\zeta}(\Gb))$.
We first show 
\begin{equation}
\label{eq:Hom-cong}
\Hom_{U_{K,\zeta}(\Gb)}(U_{K,\zeta}(\Gp_J),M)
\cong
\Hom_{U_{K,\zeta}(\Gb_J)}(U_{K,\zeta}(\Gl_J),M).
\end{equation}
By
\[
U_{K,\zeta}(\Gp_J)\cong U_{K,\zeta}(\Gb)\otimes U_{K,\zeta}(\Gn_J^+),
\qquad
U_{K,\zeta}(\Gl_J)\cong U_{K,\zeta}(\Gb_J)\otimes U_{K,\zeta}(\Gn_J^+)
\]
we have 
\[
\Hom_{U_{K,\zeta}(\Gb)}(U_{K,\zeta}(\Gp_J),M)
\cong
\Hom_{K}(U_{K,\zeta}(\Gn_J^+),M)
\cong
\Hom_{U_{K,\zeta}(\Gb_J)}(U_{K,\zeta}(\Gl_J),M).
\]
\eqref{eq:Hom-cong} is verified.
In \eqref{eq:Hom-cong}
$\theta\in \Hom_{U_{K,\zeta}(\Gb_J)}(U_{K,\zeta}(\Gl_J),M)$ corresponds to
\newline
$\tilde{\theta}\in
\Hom_{U_{K,\zeta}(\Gb)}(U_{K,\zeta}(\Gp_J),M)$ satisfying
\[
\tilde{\theta}(ba)=b\theta(a)
\qquad(a\in U_{K,\zeta}(\Gl_J), b\in U_{K,\zeta}(\Gb)).
\]
We need to show that $U_{K,\zeta}(\Gp_J)$-integrable elements in the left side of 
\eqref{eq:Hom-cong} correspondes to 
$U_{K,\zeta}(\Gl_J)$-integrable elements in the right side.
We see easily that the isomorphism \eqref{eq:Hom-cong} respects the action of $U_{K,\zeta}(\Gl_J)$, and hence it is sufficient to show that  for any $U_{K,\zeta}(\Gl_J)$-integrable element $\theta$ of the right side of \eqref{eq:Hom-cong} and for any $i\in I\setminus J$ we have $f_i^{(n)}\tilde{\theta}=0$ for $n\gg0$.
For $a\in U_{K,\zeta}(\Gn_J^+)$, 
$b\in U_{K,\zeta}(\Gb)$
we have
\[
(f_i^{(n)}\tilde{\theta})(ba)=
\tilde{\theta}(baf_i^{(n)})
=
\tilde{\theta}(bf_i^{(n)}a)
=
bf_i^{(n)}({\theta}(a)), 
\]
and hence we have only to verify that
$\theta(U_{K,\zeta}(\Gn_J^+))$ is finite-dimensional.
Since $\theta$ is $U_{K,\zeta}(\Gl_J)$-integrable,
$U_{K,\zeta}(\Gl_J)\theta$ is finite-dimensional, and 
hence
$\theta(U_{K,\zeta}(\Gn_J^+))$ is finite-dimensional by
\[
\theta(U_{K,\zeta}(\Gn_J^+))
\subset\theta(U_{K,\zeta}(\Gl_J))
\subset
(U_{K,\zeta}(\Gl_J)\theta)(1).
\]
\end{proof}
\begin{proposition}
\label{prop:IndM}
For $V\in\Mod_\inte(U_{K,\zeta}(\Gp_J))$ we have
\[
R^i\Ind^{\Gp_J,\Gb}(V)
\cong
\begin{cases}
V\qquad&(i=0)
\\
0&(i\ne0).
\end{cases}
\]
\end{proposition}
\begin{proof}
By Lemma \ref{lem:TI} we may assume that $V$ is the one-dimensional trivial  $U_{K,\zeta}(\Gp_J)$-module $K$ corresponding to the character $\varepsilon:U_{K,\zeta}(\Gp_J)\to K$.
Note that we have a canonical homomorphism 
$K\to \Ind^{\Gp_J,\Gb}(K)$ of $U_{K,\zeta}(\Gp_J)$-modules given by $K\ni 1\mapsto \varepsilon\in\Hom_{U_{K,\zeta}(\Gb)}(U_{K,\zeta}(\Gp_J),K)$.
By Lemma \ref{lem:IndPL} it is sufficient to show that the canonical homomorphism
$K\to \Ind^{\Gl_J,\Gb_J}(K)$ of $U_{K,\zeta}(\Gl_J)$-modules is an isomorphism and that 
$R^i\Ind^{\Gl_J,\Gb_J}(K)=\{0\}$ for $i\ne0$.
This follows from \eqref{eq:IndOne} and Proposition \ref{prop:BBW} applied to the case $\Gg=\Gl_J$.
\end{proof}
\subsection{}
We define a twisted action 
of $W$ on $O_{K,\zeta}(H)
=\bigoplus_{\lambda\in\Lambda}K\chi_\lambda
$ 
by
\[
w\circ\chi_\lambda=
\zeta^{2\langle\rho,\jmath_0(w\lambda-\lambda)\rangle}
\chi_{w\lambda}
\qquad
(w\in W, \lambda\in\Lambda).
\]
We set 
\begin{align*}
O_{K,\zeta}(H)^{W\circ}
=&\{f\in O_{K,\zeta}(H)\mid
w\circ f=f\;\;(w\in W)\},
\\
O_{K,\zeta}(G)^{\ad(U_{K,\zeta}(\Gg))}
=&
\{\varphi\in  O_{K,\zeta}(G)
\mid
\ad(x)(\varphi)=\varepsilon(x)\varphi
\;\;(x\in U_{K,\zeta}(\Gg))\}.
\end{align*}
The natural Hopf algebra  homomorphism 
$O_{K,\zeta}(G)\to O_{K,\zeta}(H)$ corresponding to $U_{K,\zeta}(\Gh)\hookrightarrow U_{K,\zeta}(\Gg)$
induces the isomorphism
\[
 O_{K,\zeta}(G)^{\ad(U_{K,\zeta}(\Gg))}
\cong 
 O_{K,\zeta}(H)^{W\circ}
\]
of $K$-algebras (see \cite[Proposition 6.4]{T3}).
In particular, we have a natural embedding 
$
O_{K,\zeta}(H)^{W\circ}\hookrightarrow
 O_{K,\zeta}(G)
$
of $K$-algebras.
Applying the above argument to $L_J$ instead of $G$ we also obtain a natural embedding 
\begin{equation}
\label{eq:O-embL}
O_{K,\zeta}(H)^{W_J\circ}
\hookrightarrow
 O_{K,\zeta}(L_J)
 \qquad(J\subset I)
 \end {equation}
of $K$-algebras 
whose image coincides with 
$O_{K,\zeta}(L_J)^{\ad(U_{K,\zeta}(\Gl_J))}$.
Composing this with $\pi_J^*:O_{K,\zeta}(L_J)\to O_{K,\zeta}(P_J)$ we also obtain an embedding
\begin{equation}
\label{eq:O-embP}
\iota_J:O_{K,\zeta}(H)^{W_J\circ}
\hookrightarrow
 O_{K,\zeta}(P_J)
 \qquad(J\subset I)
 \end {equation}
of $K$-algebras whose image is contained in 
$O_{K,\zeta}(P_{J})^{\ad(U_{K,\zeta}(\Gp_{J}))}$.

Now let us define 
for $J_1\subset J_2\subset I$ a homomorphism
\begin{equation}
\label{eq:Upsilon-gen}
\Upsilon^{J_2,J_1}:
 O_{K,\zeta}(P_{J_2})_\ad
\otimes_{ O_{K,\zeta}(H)^{W_{J_2}\circ}}
 O_{K,\zeta}(H)^{W_{J_1}\circ}
\to\Ind^{\Gp_{J_2},\Gp_{J_1}}( O_{K,\zeta}(P_{J_1})_\ad)
\end{equation}
of $U_{K,\zeta}(\Gp_{J_2})$-modules.
Here, we added the subscript ``ad'' since $O_{K,\zeta}(P_{J})$ for $J=J_1, J_2$ is regarded as a $U_{K,\zeta}(\Gp_J)$-module with respect to the adjoint action.
We regard 
$O_{K,\zeta}(P_{J_2})_\ad$ as a right
$O_{K,\zeta}(H)^{W_{J_2}\circ}$-module by the right multiplication through $\iota_{J_2}$.
Since the image of $\iota_{J_2}$ is contained in 
$O_{K,\zeta}(P_{J_2})^{\ad(U_{K,\zeta}(\Gp_{J_2}))}$, we have a well defined action of 
$U_{K,\zeta}(\Gp_{J_2})$ on 
$O_{K,\zeta}(P_{J_2})_\ad
\otimes_{O_{K,\zeta}(H)^{W_{J_2}\circ}}
 O_{K,\zeta}(H)^{W_{J_1}\circ}
$ given by 
\[
u\cdot(\varphi\otimes\chi)
=\ad(u)(\varphi)\otimes\chi\
\quad(u\in U_{K,\zeta}(\Gp_{J_2}), 
\;\varphi\in O_{K,\zeta}(P_{J_2}),\;
\chi\in  O_{K,\zeta}(H)^{W_{J_1}\circ}).
\]
We define \eqref{eq:Upsilon-gen} by
\[
(\Upsilon^{J_2,J_1}(\varphi\otimes\chi))(u)
=
\res(\ad(u)(\varphi))\iota_{J_1}(\chi)
\]
for 
$\varphi\in  O_{K,\zeta}(P_{J_2})_\ad$, 
$\chi\in  O_{K,\zeta}(H)^{W_{J_1}\circ}$,
$u\in U_{K,\zeta}(\Gp_{J_2})$,
where $\res:O_{K,\zeta}(P_{J_2})\to O_{K,\zeta}(P_{J_1})$ is the Hopf algebra homomorphism induced by 
$U_{K,\zeta}(\Gp_{J_1})\hookrightarrow U_{K,\zeta}(\Gp_{J_2})$.
\section{Equivariant modules}
\subsection{}
Let $J\subset I$.
We say that $\CO$ is a $U_{K,\zeta}(\Gp_J)$-equivariant $K$-algebra if it is a $K$-algebra endowed with
a $U_{K,\zeta}(\Gp_J)$-bimodule structure satisfying
\[
u(\varphi\psi)=
\sum_{(u)}(u_{(0)}\varphi)(u_{(1)}\psi), 
\qquad
(\varphi\psi)u=
\sum_{(u)}(\varphi u_{(0)})(\psi u_{(1)})
\]
for $u\in U_{K,\zeta}(\Gp_J)$, $\varphi, \psi\in\CO$.
Note that $O_{K,\zeta}(P_{J'})$ for $J\subset J'\subset I$ is a $U_{K,\zeta}(\Gp_J)$-equivariant $K$-algebra.

For a $U_{K,\zeta}(\Gp_J)$-equivariant $K$-algebra $\CO$ 
we denote by $\Mod^\CO(U_{K,\zeta}(\Gp_J))$ the category consisting of an $\CO$-module $M$
 equipped with a $U_{K,\zeta}(\Gp_J)$-module  structure satisfying
\[
u(\varphi m)=\sum_{(u)}(u_{(0)}\varphi(S^{-1}u_{(2)}))
(u_{(1)}m)
\qquad
(u\in U_{K,\zeta}(\Gp_J), \varphi\in\CO, m\in M).
\]
The objects of $\Mod^\CO(U_{K,\zeta}(\Gp_J))$ 
are called 
$U_{K,\zeta}(\Gp_J)$-equivariant $\CO$-modules.
Note that $\CO$ endowed with the $\CO$-module structure given by the left multiplication and the $U_{K,\zeta}(\Gp_J)$-module structure given 
by
\[
U_{K,\zeta}(\Gp_J)\times \CO\to \CO
\qquad
((u,\varphi)\mapsto \sum_{(u)}u_{(0)}\varphi(S^{-1}u_{(1)}))
\]
is an object of $\Mod^\CO(U_{K,\zeta}(\Gp_J))$.

We denote by $\Mod^\CO_\inte(U_{K,\zeta}(\Gp_J))$ 
(resp.\ 
$\Mod^\CO_\diag(U_{K,\zeta}(\Gp_J))$)
the full subcategory of 
$\Mod^\CO(U_{K,\zeta}(\Gp_J))$ 
consisting of 
those modules $M\in \Mod^\CO(U_{K,\zeta}(\Gp_J))$ 
that are integrable 
(resp.\ diagonalizable) as  
$U_{K,\zeta}(\Gp_J)$-modules.
\subsection{}
\label{subsec:tildetensor}
For a left $U_{K,\zeta}(\Gp_J)$-module
$M$ and a $U_{K,\zeta}(\Gp_J)$-bimodule $N$ 
we define a left $U_{K,\zeta}(\Gp_J)$-module structure
of $N\otimes M$ by
\[
u\cdot(n\otimes m)
=\sum_{(u)}u_{(0)}n(S^{-1}u_{(2)})\otimes u_{(1)}m
\qquad
(u\in U_{K,\zeta}(\Gp_J)).
\]
We denote this 
$U_{K,\zeta}(\Gp_J)$-module 
by $N\Breve{\otimes}M$.

For a homomorphism $f:\CO\to\CO'$ of  $U_{K,\zeta}(\Gp_J)$-equivariant $K$-algebras we have a natural functor
\begin{equation}
\label{eq:equiv-ext}
\CO'\Breve{\otimes}_\CO(\bullet):\Mod^\CO(U_{K,\zeta}(\Gp_J))
\to
\Mod^{\CO'}(U_{K,\zeta}(\Gp_J))
\end{equation}
sending $M$ to $\CO'\Breve{\otimes}_\CO M$.

Similarly to Lemma \ref{lem:TI} we have the following.
\begin{lemma}
\label{lem:TTI}
Let $J_1\subset J_2\subset I$.
Let $V$ be a 
$U_{K,\zeta}(\Gp_{J_2})$-bimodule, which is integrable both as a left and a right $U_{K,\zeta}(\Gp_{J_2})$-module, 
and let $M$ be an integrable left 
$U_{K,\zeta}(\Gp_{J_1})$-module.
Then we have a canonical isomorphism
\[
R^i\Ind^{\Gp_{J_2},\Gp_{J_1}}(V\Breve{\otimes}M)
\cong
V
\Breve{\otimes}
R^i\Ind^{\Gp_{J_2},\Gp_{J_1}}(M)
\]
of $U_{K,\zeta}(\Gp_{J_2})$-modules 
for any $i$.
\end{lemma}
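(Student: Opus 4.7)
The plan is to imitate the proof of Lemma \ref{lem:TI}, adapted to the bimodule twist encoded by $\Breve{\otimes}$. First I would settle the $i=0$ case by exhibiting an explicit $U_{K,\zeta}(\Gp_{J_2})$-linear isomorphism
\[
\Phi_M \colon V \Breve{\otimes} \Ind^{\Gp_{J_2},\Gp_{J_1}}(M) \xrightarrow{\sim} \Ind^{\Gp_{J_2},\Gp_{J_1}}(V \Breve{\otimes} M)
\]
given on elementary tensors by
\[
\Phi_M(v \otimes \theta)(u) = \sum_{(u)} u_{(0)}\, v\, S^{-1}(u_{(2)}) \otimes \theta(u_{(1)})
\]
for $v \in V$, $\theta \in \Ind^{\Gp_{J_2},\Gp_{J_1}}(M)$, $u \in U_{K,\zeta}(\Gp_{J_2})$. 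Three things must be checked: that $\Phi_M(v\otimes\theta)$ is $U_{K,\zeta}(\Gp_{J_1})$-linear as a function of $u$ (using coassociativity, the algebra-map property of $\Delta$, antimultiplicativity of $S^{-1}$, and the definition of the twisted action on $V \Breve{\otimes} M$); that the image is integrable (using that the image of $\theta$ together with any $v$ generates a finite-dimensional integrable subspace of $V \Breve{\otimes} M$); and that $\Phi_M$ intertwines the two $U_{K,\zeta}(\Gp_{J_2})$-actions (again a direct computation using $\Delta^2(uu')=\Delta^2(u)\Delta^2(u')$). A bijective inverse is then constructed in a standard fashion.

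To pass from $i=0$ to all $i$, I would fix an injective resolution $M \to I^\bullet$ in $\Mod_\inte(U_{K,\zeta}(\Gp_{J_1}))$. The functor $V\Breve{\otimes}(\bullet)$ is exact (tensor product over $K$ with $V$, twisted functorially in the module structure), so $V \Breve{\otimes} I^\bullet$ is a resolution of $V \Breve{\otimes} M$. Applying the isomorphism $\Phi$ termwise gives an isomorphism of cochain complexes
\[
V \Breve{\otimes} \Ind^{\Gp_{J_2},\Gp_{J_1}}(I^\bullet) \cong \Ind^{\Gp_{J_2},\Gp_{J_1}}(V \Breve{\otimes} I^\bullet),
\]
whose left-hand side, by exactness of $V\Breve{\otimes}(\bullet)$, has $i$-th cohomology equal to $V \Breve{\otimes} R^i\Ind^{\Gp_{J_2},\Gp_{J_1}}(M)$.

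The main obstacle is to identify the cohomology of the right-hand side with $R^i\Ind^{\Gp_{J_2},\Gp_{J_1}}(V\Breve{\otimes} M)$, i.e.\ to show that $V\Breve{\otimes} I^k$ is $\Ind^{\Gp_{J_2},\Gp_{J_1}}$-acyclic for every $k$. The cleanest route is to show that $V\Breve{\otimes}(\bullet)$ preserves injectives in $\Mod_\inte(U_{K,\zeta}(\Gp_{J_1}))$; I would establish this via the existence of an inverse functor $V^\vee\Breve{\otimes}(\bullet)$ in the monoidal category of integrable $U_{K,\zeta}(\Gp_{J_1})$-bimodules, making $V\Breve{\otimes}(\bullet)$ an auto-equivalence. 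Should this be delicate for infinite-dimensional $V$, one can reduce to the finite-dimensional case by writing $V$ as a filtered colimit of integrable sub-bimodules and using that both $\Ind$ and $\Breve{\otimes}$ commute with such colimits when applied to integrable modules. This bimodule-categorical bookkeeping is the only nontrivial step beyond the explicit $i=0$ calculation.
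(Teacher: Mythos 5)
The paper gives no proof for this lemma; it writes only ``Similarly to Lemma \ref{lem:TI}'' and implicitly defers to \cite{APW}. Your overall strategy --- an explicit isomorphism for $i=0$ followed by an injective-preservation argument for higher $i$ --- is what the paper intends, and your formula for $\Phi_M$ is the correct $\Breve{\otimes}$-twisted analogue of the formula displayed in Lemma \ref{lem:TI}.

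There is, however, a genuine error in the passage to higher $i$. The claim that $V\Breve{\otimes}(\bullet)$ is an auto-equivalence with inverse $V^\vee\Breve{\otimes}(\bullet)$ is false: already for $V$ a trivial two-dimensional bimodule one has $V\Breve{\otimes}(\bullet)=(\bullet)\oplus(\bullet)$, which is not an equivalence; and in general $V^\vee\Breve{\otimes}(V\Breve{\otimes}M)\cong(V^\vee\otimes V)\Breve{\otimes}M$, which maps onto $M$ via the evaluation but is not isomorphic to $M$. What is true for finite-dimensional $V$ is that $V^\vee\Breve{\otimes}(\bullet)$ is a two-sided \emph{adjoint} (not inverse) of $V\Breve{\otimes}(\bullet)$, via evaluation and coevaluation; since this adjoint is exact, $V\Breve{\otimes}(\bullet)$ preserves injectives, which is the acyclicity you need, and a filtered colimit over finite-dimensional sub-bimodules handles general $V$. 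A cleaner route avoiding any finiteness hypothesis: injectives in $\Mod_\inte(U_{K,\zeta}(\Gp_{J_1}))\simeq\Comod(O_{K,\zeta}(P_{J_1}))$ are direct summands of cofree comodules $W\otimes O_{K,\zeta}(P_{J_1})$, and one checks directly that $V\Breve{\otimes}(W\otimes O_{K,\zeta}(P_{J_1}))$ is again cofree, the cofree factor absorbing the $\Breve{\otimes}$-twist on $V$; this is the argument underlying the tensor identity in \cite{APW}. Either repair closes the gap, but the auto-equivalence claim as written does not.
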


Let $J_1\subset J_2\subset I$ and let $\CO$ be a $U_{K,\zeta}(\Gp_{J_2})$-equivariant $K$-algebra.
We assume that $\CO$ is integrable both as a left and a  right 
$U_{K,\zeta}(\Gp_{J_2})$-module.
It is easily seen that 
for $M\in
\Mod_\inte(U_{K,\zeta}(\Gp_{J_1}))$ 
the $U_{K,\zeta}(\Gp_{J_1})$-module
$\CO\Breve{\otimes}M$
belongs to 
$\Mod^\CO_\inte(U_{K,\zeta}(\Gp_{J_1}))$, where 
the left $\CO$-module structure of 
$\CO\Breve{\otimes}M$ is 
given by the  left multiplication on the first factor.

Let $M\in \Mod^\CO_\inte(U_{K,\zeta}(\Gp_{J_1}))$.
Then we have $R^i\Ind^{\Gp_{J_2},\Gp_{J_1}}(M)\in\Mod^\CO_\inte(U_{K,\zeta}(\Gp_{J_2}))$ with respect to the 
$\CO$-module structure of 
$R^i\Ind^{\Gp_{J_2},\Gp_{J_1}}(M)$ given by
\[
\CO\Breve{\otimes}
R^i\Ind^{\Gp_{J_2},\Gp_{J_1}}(M)
\cong
R^i\Ind^{\Gp_{J_2},\Gp_{J_1}}(
\CO\Breve{\otimes}M)
\to
R^i\Ind^{\Gp_{J_2},\Gp_{J_1}}(M),
\]
where the first isomorphism is given by Lemma \ref{lem:TTI} and the second homomorphism is induced by the $\CO$-module structure 
$\CO\otimes M\to M$ of $M$.
Hence the derived induction functor
\[
R^i\Ind^{\Gp_{J_2},\Gp_{J_1}}:
\Mod_\inte(U_{K,\zeta}(\Gp_{J_1}))
\to
\Mod_\inte(U_{K,\zeta}(\Gp_{J_2}))
\]
induces a natural functor
\begin{equation}
R^i\Ind^{\Gp_{J_2},\Gp_{J_1}}:
\Mod^\CO_\inte(U_{K,\zeta}(\Gp_{J_1}))
\to
\Mod^\CO_\inte(U_{K,\zeta}(\Gp_{J_2})).
\end{equation}

\subsection{}
Note that we have an exact functor
\begin{equation}
\label{eq:funhat}
\hat{O}_{K,\zeta}(G)\Breve{\otimes}_{{O}_{K,\zeta}(G)}(\bullet)=\CS^{-1}:
\Mod^{{O}_{K,\zeta}(G)}_{\inte}
(U_{K,\zeta}(\Gb))
\to
\Mod^{\hat{O}_{K,\zeta}(G)}_{\diag}
(U_{K,\zeta}(\Gb))
\end{equation}
sending $M\in
\Mod^{{O}_{K,\zeta}(G)}_{\inte}
(U_{K,\zeta}(\Gb))
$ to
\[
\hat{O}_{K,\zeta}(G)\Breve{\otimes}_{{O}_{K,\zeta}(G)}M
=\CS^{-1}M\in
\Mod^{{\hat{O}_{K,\zeta}(G)}}_{\diag}
(U_{K,\zeta}(\Gb)).
\]
Note also that \eqref{eq:finite}
induces a left exact functor
\begin{equation}
(\bullet)^\inte:
\Mod^{\hat{O}_{K,\zeta}(G)}_{\diag}
(U_{K,\zeta}(\Gb))\to{\Mod^{{O}_{K,\zeta}(G)}_{\inte}
(U_{K,\zeta}(\Gb))}
\end{equation}
sending $M\in
\Mod^{\hat{O}_{K,\zeta}(G)}_{\diag}
(U_{K,\zeta}(\Gb))$ to
\[
M^\inte=\{m\in M\mid
\dim U_{K,\zeta}(\Gb)m<\infty\}\in
\Mod^{{O}_{K,\zeta}(G)}_{\inte}
(U_{K,\zeta}(\Gb)).
\]
\begin{proposition}
\label{prop:lz}
Assume that $\zeta$ is transcendental over the prime field of $K$ or a root of unity.
If $M\in
\Mod^{{O}_{K,\zeta}(G)}_{\inte}
(U_{K,\zeta}(\Gb))$ satisfies $\CS^{-1}M=\{0\}$, then 
$M=\{0\}$.
\end{proposition}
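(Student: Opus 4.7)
The plan is to prove the contrapositive: for any $M \in \Mod^{O_{K,\zeta}(G)}_\inte(U_{K,\zeta}(\Gb))$ with $M\neq 0$, produce a nonzero element of $\CS^{-1}M$. Equivalently, I will show that the $\CS$-torsion subspace
\[
T(M) = \{m \in M \mid \chi_\lambda m = 0 \text{ for some } \lambda \in \Lambda^+\}
\]
must be $\{0\}$. The first task is to verify that $T(M)$ is actually a subobject of $M$ in $\Mod^{O_{K,\zeta}(G)}_\inte(U_{K,\zeta}(\Gb))$. That $T(M)$ is an $O_{K,\zeta}(G)$-submodule follows from the left and right Ore conditions for $\CS$ (cited in \S3.1): given $m,m'$ annihilated by $s,s'\in\CS$ and $\varphi \in O_{K,\zeta}(G)$, one uses left Ore to produce a common left multiple in $\CS$ killing $m+m'$ and to clear $\varphi$ past $s$ in showing $\varphi m \in T(M)$. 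That $T(M)$ is $U_{K,\zeta}(\Gb)$-stable is checked via the equivariance identity $u(\varphi m)=\sum_{(u)}(u_{(0)}\varphi S^{-1}u_{(2)})(u_{(1)}m)$, the bimodule weight property $k_\gamma\chi_\lambda k_\gamma^{-1}=\chi_\lambda$ coming from Lemma \ref{lem:OG}, and induction on length of expressions in the $f_i$.

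Replacing $M$ by $T(M)$, I assume for contradiction that $M = T(M) \neq 0$. Pick $m \neq 0$ and set $V = U_{K,\zeta}(\Gb) m \subset M$, which is finite-dimensional by integrability. Iterating the left Ore condition across a finite $K$-basis of $V$ produces a single $\chi_\lambda \in \CS$ with $\chi_\lambda V = 0$. The remaining core of the argument is to deduce that this forces $V=0$.

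For the core step, I take a lowest-weight vector $v_0 \in V$, i.e., a nonzero weight vector killed by all $f_i$ (which exists in any finite-dimensional $U_{K,\zeta}(\Gb)$-module). Applying the equivariance identity iteratively with $u = f_{i_1}\cdots f_{i_k}$ to the relation $\chi_\lambda v_0 = 0$, and simplifying using $f_i v_0 = 0$ and the $k_i$-action on $v_0$, I aim to convert this single vanishing into the family of identities $\Phi_{V_{K,\zeta}(\lambda)}(v^*_\lambda \otimes w)\cdot v_0 = 0$ as $w = f_{i_1}\cdots f_{i_k}v_\lambda$ ranges over a spanning set of $V_{K,\zeta}(\lambda) = \tilde{U}_{K,\zeta}(\Gn)v_\lambda$. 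A symmetric use of the "right-action" half encoded in the factor $S^{-1}u_{(2)}$ would then give $\CM_\lambda\cdot v_0 = 0$ for the full matrix coefficient space $\CM_\lambda = \Phi_{V_{K,\zeta}(\lambda)}(V^*_{K,\zeta}(\lambda)\otimes V_{K,\zeta}(\lambda)) \subset O_{K,\zeta}(G)$. The desired contradiction would come from a Peter-Weyl / Cramer-type identity which, under the hypothesis on $\zeta$, expresses a nonzero scalar in $O_{K,\zeta}(G)$ as a product of elements of $\CM_\lambda$ and its translates, forcing $v_0 = 0$.

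The step I expect to be hardest is this final extraction of a nonzero scalar annihilator of $v_0$: from the vanishing of an entire block $\CM_\lambda\cdot v_0$, pass to the vanishing of $1\cdot v_0 = v_0$. This is essentially a quantum analogue of the fact that the determinant of a generic matrix of matrix coefficients is a nonzero element of $O_{K,\zeta}(G)$; the hypothesis that $\zeta$ is either transcendental over the prime field or a root of unity is expected to enter precisely here, ruling out the intermediate algebraic values of $\zeta$ where such a determinant could degenerate.
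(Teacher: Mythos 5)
Your overall skeleton matches the paper's: from $\chi_\lambda M'=\{0\}$ for a finite-dimensional $M'=U_{K,\zeta}(\Gb)m$, propagate via the equivariance identity to the vanishing of a larger space of matrix coefficients acting on $M'$, then invoke a nontrivial fact about $O_{K,\zeta}(G)$ to conclude $m=0$. And you correctly guessed that the restriction on $\zeta$ is used only in the closing step. But there are two genuine problems. First, you overclaim what the equivariance formula gives: since $u$ ranges only over $U_{K,\zeta}(\Gb)$, the right factor $S^{-1}u_{(2)}$ also lies in $U_{K,\zeta}(\Gb)$, and $\chi_\lambda\cdot U_{K,\zeta}(\Gb)=K\chi_\lambda$ (the vector $v^*_\lambda$ is killed from the right by all $f_i^{(n)}$, $n>0$, and is a $U_{K,\zeta}(\Gh)$-eigenvector). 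So the ``symmetric use of the right-action half'' produces nothing new, and you cannot reach the full $\CM_\lambda=\Phi(V^*\otimes V)$; what you actually get is only $\Phi(v^*_\lambda\otimes V_{K,\zeta}(\lambda))\cdot M'=\{0\}$, i.e.\ one row of matrix coefficients. The detour through a lowest-weight vector $v_0$ is also unnecessary --- the paper works with $M'$ and $m$ directly.

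Second, and more seriously, the ``Peter--Weyl / Cramer-type identity'' you gesture at is not the right tool and is left entirely unproved. The paper closes the argument by citing \cite[Corollary 8.8]{TA}: the extremal matrix coefficients $\Phi_{V_{K,\zeta}(\lambda)}(v^*_\lambda\otimes V_{K,\zeta}(\lambda)_{w\lambda})$, $w\in W$, generate $O_{K,\zeta}(G)$ as a left ideal, i.e.\ $\sum_{w\in W}O_{K,\zeta}(G)\,\Phi(v^*_\lambda\otimes V(\lambda)_{w\lambda})=O_{K,\zeta}(G)$. This is the quantum analogue of the assertion that the big Bruhat cells cover the flag manifold (``the quantum Pl\"ucker coordinates have no common zero''), not a determinant identity, and it is precisely here that the assumption on $\zeta$ is used. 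Since each $\Phi(v^*_\lambda\otimes V(\lambda)_{w\lambda})$ lies in $U_{K,\zeta}(\Gb)\chi_\lambda$, this yields $m\in O_{K,\zeta}(G)m\subset O_{K,\zeta}(G)(U_{K,\zeta}(\Gb)\chi_\lambda)M'=\{0\}$. Without a precise statement and proof (or citation) of this affine-cover result, your proposal has a gap exactly where you anticipated the argument would be hardest.
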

\begin{proof}
For any $m\in M$ there exists some $\lambda\in\Lambda^+$ such that $\chi_\lambda m=0$ by our assumption.
Let $m\in M$ and set $M'=U_{K,\zeta}(\Gb)m\subset M$.
Since $M'$ is finite-dimensional, there exists $\lambda\in\Lambda^+$ such that $\chi_\lambda M'=\{0\}$.
Let us first show 
\begin{equation}
\label{eq:a}
(U_{K,\zeta}(\Gb)\chi_\lambda)M'=\{0\}.
\end{equation}
By 
$U_{K,\zeta}(\Gb)\chi_\lambda=
\tilde{U}_{K,\zeta}(\Gn)\chi_\lambda$
it is sufficient to show 
$({\tilde{U}}_{K,\zeta}(\Gn)\chi_\lambda)M'=\{0\}$.
We show $({\tilde{U}}_{K,\zeta}(\Gn)_{-\gamma}\chi_\lambda)M'=\{0\}$
for any $\gamma\in Q^+$ by induction on $\gamma$.
If $\gamma=0$, we have 
${\tilde{U}}_{K,\zeta}(\Gn)_0\chi_\lambda=K\chi_\lambda$, and hence the assertion is obvious.
Let $\gamma\in Q^+\setminus\{0\}$, 
and let us show
$({\tilde{U}}_{K,\zeta}(\Gn)_{-\gamma}\chi_\lambda)M'=\{0\}$
assuming
$({\tilde{U}}_{K,\zeta}(\Gn)_{-\gamma'}\chi_\lambda)M'=\{0\}$
for any $\gamma'\in (Q^+\cap(\gamma- Q^+))\setminus\{\gamma\}$.
Take $y\in {\tilde{U}}_{K,\zeta}(\Gn)_{-\gamma}$.
By $\chi_\lambda {\tilde{U}}_{K,\zeta}(\Gn)_{-\gamma''}=\{0\}$ for any $\gamma''\in Q^+\setminus\{0\}$
we have
\[
\{0\}=
y(\chi_\lambda M')
=
\sum_{(y)}
(y_{(0)}\chi_\lambda(S^{-1}y_{(2)}))(y_{(1)}M')
=
\sum_{(y)}
(y_{(0)}\chi_\lambda)(y_{(1)}M').
\]
Hence we obtain $(y\chi_\lambda)M'=\{0\}$ from
\[
\Delta(y)\in y\otimes k_{-\gamma}
+\sum_{\gamma'\in (Q^+\cap(\gamma- Q^+))\setminus\{\gamma\}}
{\tilde{U}}_{K,\zeta}(\Gn)_{-\gamma'}\otimes U_{K,\zeta}(\Gb)
\]
and the hypothesis of induction. 
We have verified \eqref{eq:a}.
By $\chi_\lambda=\Phi_{V_{K,\zeta}(\lambda)}(v^*_\lambda\otimes v_\lambda)$ we have
$U_{K,\zeta}(\Gb)\chi_\lambda
=\Phi_{V_{K,\zeta}(\lambda)}(v^*_\lambda\otimes U_{K,\zeta}(\Gb)v_\lambda)
=\Phi_{V_{K,\zeta}(\lambda)}(v^*_\lambda\otimes V_{K,\zeta}(\lambda))
$.
In particular, we have 
$\Phi_{V_{K,\zeta}(\lambda)}(v^*_\lambda\otimes V_{K,\zeta}(\lambda)_{w\lambda})
\subset 
U_{K,\zeta}(\Gb)\chi_\lambda$
for any $w\in W$.
It follows that 
\begin{align*}
m\in& O_{K,\zeta}(G)m=\sum_{w\in W}O_{K,\zeta}(G)\Phi_{V_{K,\zeta}(\lambda)}(v^*_\lambda\otimes V_{K,\zeta}(\lambda)_{w\lambda})
m
\\
\subset&
O_{K,\zeta}(G)(U_{K,\zeta}(\Gb)\chi_\lambda)M'=\{0\}
\end{align*}
by \cite[Corollary 8.8]{TA}.
\end{proof}
\begin{corollary}
\label{cor:FF}
Assume that $\zeta$ is transcendental over the prime field of $K$ or a root of unity.
\begin{itemize}
\item[(i)]
For $M\in
\Mod^{{O}_{K,\zeta}(G)}_{\inte}
(U_{K,\zeta}(\Gb))$ we have $(\CS^{-1}M)^\inte=M$.
In particular $M\to \CS^{-1}M$ is injective.
\item[(ii)]
The functor \eqref{eq:funhat} is fully faithful.
\end{itemize}
\end{corollary}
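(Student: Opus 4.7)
The plan is to bootstrap from Proposition \ref{prop:lz}, which asserts that any $M \in \Mod^{O_{K,\zeta}(G)}_\inte(U_{K,\zeta}(\Gb))$ with $\CS^{-1}M = 0$ must itself vanish. Both parts of the corollary reduce to applying this vanishing principle twice, so no serious obstacle is anticipated; the only care needed is in verifying that the auxiliary submodules and quotients we produce still lie in $\Mod^{O_{K,\zeta}(G)}_\inte(U_{K,\zeta}(\Gb))$.

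For (i), I would first establish injectivity of the canonical map $M \to \CS^{-1}M$ by examining its kernel $N$, namely the $\CS$-torsion submodule $\{m \in M \mid \chi_\lambda m = 0 \text{ for some } \lambda \in \Lambda^+\}$. Because the elements $\chi_\lambda$ lie in the central subalgebra $O_{K,\zeta}(G)^{\ad(U_{K,\zeta}(\Gg))}$, the subspace $N$ is stable under both the $O_{K,\zeta}(G)$-action and the $U_{K,\zeta}(\Gb)$-action, hence inherits a structure of object of $\Mod^{O_{K,\zeta}(G)}_\inte(U_{K,\zeta}(\Gb))$. Since trivially $\CS^{-1}N = 0$, Proposition \ref{prop:lz} forces $N = 0$. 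With this in hand, $M$ embeds into $\CS^{-1}M$, and the integrability of $M$ gives $M \subseteq (\CS^{-1}M)^\inte$.

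For the reverse inclusion I would consider $Q = (\CS^{-1}M)^\inte / M$, which again lies in $\Mod^{O_{K,\zeta}(G)}_\inte(U_{K,\zeta}(\Gb))$. From $M \subseteq (\CS^{-1}M)^\inte \subseteq \CS^{-1}M$ and the exactness of localization one reads off $\CS^{-1}((\CS^{-1}M)^\inte) = \CS^{-1}M$, hence $\CS^{-1}Q = 0$. A second appeal to Proposition \ref{prop:lz} gives $Q = 0$, i.e.\ $(\CS^{-1}M)^\inte = M$, proving (i).

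Part (ii) follows formally from (i). Given $M_1, M_2 \in \Mod^{O_{K,\zeta}(G)}_\inte(U_{K,\zeta}(\Gb))$, injectivity of $\Hom(M_1, M_2) \to \Hom(\CS^{-1}M_1, \CS^{-1}M_2)$ is immediate from the injection $M_2 \hookrightarrow \CS^{-1}M_2$ of (i). For surjectivity, given $g \colon \CS^{-1}M_1 \to \CS^{-1}M_2$, the composite $M_1 \hookrightarrow \CS^{-1}M_1 \xrightarrow{g} \CS^{-1}M_2$ has integrable image, so lands in $(\CS^{-1}M_2)^\inte = M_2$ by (i); this produces the required $f \colon M_1 \to M_2$, and $\CS^{-1}f = g$ by the universal property of localization.
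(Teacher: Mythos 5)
Your argument reaches the right conclusion by essentially the same mechanism as the paper: apply Proposition \ref{prop:lz} to objects of $\Mod^{O_{K,\zeta}(G)}_{\inte}(U_{K,\zeta}(\Gb))$ that localize to zero. The paper packages both applications into one step, applying the exact functor \eqref{eq:funhat} to the four-term sequence $0\to K\to M\to(\CS^{-1}M)^{\inte}\to L\to 0$; since $\CS^{-1}M\to\CS^{-1}((\CS^{-1}M)^{\inte})$ is invertible (the localization of the inclusion $(\CS^{-1}M)^{\inte}\hookrightarrow\CS^{-1}M$ is a two-sided inverse), one reads off $\CS^{-1}K=\CS^{-1}L=0$ at once. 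Your split into an injectivity step and a quotient step produces the same short exact sequences in two pieces, and your treatment of (ii) is the same as the paper's.

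However, one justification in your argument is false and should be corrected. You assert that the $\chi_\lambda$ lie in $O_{K,\zeta}(G)^{\ad(U_{K,\zeta}(\Gg))}$ and are therefore central. This is not the case: formulas \eqref{eq:com1} and \eqref{eq:com2} show that $\chi_\lambda$ fails to commute with $\tilde{U}_{K,\zeta}(\Gn)^\bigstar$ and $U_{K,\zeta}(\Gn^+)^\bigstar$ inside $\hat{O}_{K,\zeta}(G)$, and under the isomorphism $O_{K,\zeta}(G)^{\ad(U_{K,\zeta}(\Gg))}\xrightarrow{\sim}O_{K,\zeta}(H)^{W\circ}$ the restriction of $\chi_\lambda$ to $U_{K,\zeta}(\Gh)$ would have to be $W\circ$-invariant, which it is not for $\lambda\neq0$. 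The conclusion you wanted to draw --- that $N=\Ker(M\to\CS^{-1}M)$ is a subobject in $\Mod^{O_{K,\zeta}(G)}_{\inte}(U_{K,\zeta}(\Gb))$ --- is nonetheless true for a simpler reason: $M\to\CS^{-1}M$ is by construction a morphism of $U_{K,\zeta}(\Gb)$-modules (this is part of \eqref{eq:funhat} being a functor into $\Mod^{\hat{O}_{K,\zeta}(G)}_{\diag}(U_{K,\zeta}(\Gb))$), so $N$ is a $U_{K,\zeta}(\Gb)$-submodule of $M$; it is an $O_{K,\zeta}(G)$-submodule by the two-sided Ore condition, not by centrality; and the compatibility between the two structures is inherited from $M$. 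The paper sidesteps the whole issue by taking the kernel and cokernel of the morphism $M\to(\CS^{-1}M)^{\inte}$, which already lives inside the abelian category $\Mod^{O_{K,\zeta}(G)}_{\inte}(U_{K,\zeta}(\Gb))$, so no torsion-submodule analysis is ever needed.
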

\begin{proof}
(i) Let $K$ and $L$ be the kernel and the cokernel of $M\to (\CS^{-1}M)^\inte$ respectively.
Applying the exact functor \eqref{eq:funhat} to the exact sequence
\[
0\to K\to M\to (\CS^{-1}M)^\inte\to L\to 0
\]
we obtain
\[
0\to \CS^{-1}K\to \CS^{-1}M\to \CS^{-1}(\CS^{-1}M)^\inte\to \CS^{-1}L\to 0.
\]
Applying the exact functor \eqref{eq:funhat} to $(\CS^{-1}M)^\inte\subset \CS^{-1}M$ we obtain a monomorphism
$\CS^{-1}(\CS^{-1}M)^\inte\to \CS^{-1}M$ 
such that the composite of 
$\CS^{-1}M\to \CS^{-1}(\CS^{-1}M)^\inte\to \CS^{-1}M$ is identity.
Thus
$\CS^{-1}M\to \CS^{-1}(\CS^{-1}M)^\inte$ is an isomorphism.
Hence we have $\CS^{-1}K=\CS^{-1}L=\{0\}$. 
It follows that $K=L=\{0\}$ by Proposition \ref{prop:lz}.
Therefore, $M\to (\CS^{-1}M)^\inte$ is an isomorphism.

(ii) By (i) 
\[
\Hom_{\Mod^{{O}_{K,\zeta}(G)}_{\inte}
(U_{K,\zeta}(\Gb))}(M,N)
\to
\Hom_{\Mod^{\hat{O}_{K,\zeta}(G)}_{\diag}
(U_{K,\zeta}(\Gb))}(\CS^{-1}M,\CS^{-1}N)
\]
is a bijection whose inverse is given by 
$\psi\mapsto\psi|_M$.
 \end{proof}

\section{Quantum groups of type $A$}
\label{sec:A}
\subsection{}
In the rest of this paper we assume that 
the derived group $[G,G]$ of $G$ is isomorphic to $SL_n(\BC)$.
Namely, we consider the situation where $I=\{1,\dots, n-1\}$ with 
\[
a_{ij}=\langle\alpha_j,\alpha_i^\vee\rangle
=
\begin{cases}
2\qquad&(i=j),
\\
-1&(|i-j|=1),
\\
0&(|i-j|>1),
\end{cases}
\]
and the natural restriction map
\[
\Lambda\to
\Hom_\BZ(Q^\vee,\BZ)
\]
is surjective.
In particular there exists $\epsilon_1\in \Lambda$ such that 
$\langle\epsilon_1,\alpha_i^\vee\rangle=\delta_{i,1}$ for any $i\in I$.
We fix such $\epsilon_1$ and set
$\epsilon_r=\epsilon_1-(\alpha_1+\dots\alpha_{r-1})$ for
$r=2,\dots, n$.
For $1\leqq r< s\leqq n$ we set 
$
\alpha_{rs}=\epsilon_r-\epsilon_s
$.
Then we have $\alpha_i=\alpha_{i,i+1}$ and the set of positive roots is given by 
$\Delta^+=\{
\alpha_{rs}\mid1\leqq r< s\leqq n\}$.
The Weyl group $W$ is naturally identified with the symmetric group $\GS_n$.
We have 
$
w\epsilon_r
=
\epsilon_{w(r)}
$ for $w\in \GS_n$ and $1\leqq r\leqq n$.

The $W$-invariant symmetric bilinear form \eqref{eq:sb} is chosen so that $(\alpha,\alpha)=2$ for any $\alpha\in\Delta$.
\subsection{}
Set $V=V_{K,\zeta}(\epsilon_1)$.
We have
$V=\bigoplus_{r=1}^nK v_r$ with
\begin{align*}
h v_r=\chi_{\epsilon_r}(h)v_r,
\qquad
e_i v_r=\delta_{r,i+1}v_{r-1},
\qquad
f_i v_r=\delta_{r,i}v_{r+1}
\end{align*}
for $r=1,\dots, n$, $h\in U_{K,\zeta}(\Gh)$, $i\in I$.
Denote by $\{v^*_r\mid 1\leqq r\leqq n\}$ the basis of $V^*=V^*_{K,\zeta}(\epsilon_1)$ dual to $\{v_r\mid 1\leqq r\leqq n\}$.
We have
\begin{align*}
v^*_rh=\chi_{\epsilon_r}(h)v_r,
\qquad
v^*_re_i=\delta_{r,i}v^*_{r+1},
\qquad
v^*_rf_i=\delta_{r,i+1}v^*_{r-1}
\end{align*}
for $r=1,\dots, n$, $h\in U_{K,\zeta}(\Gh)$, $i\in I$.

For $1\leqq r,s\leqq n$ we set
\[
\xi_{rs}=\Phi_{V}(v^*_r\otimes v_s)
\in O_{K,\zeta}(G).
\]
It is well-known and straightforward to show that they satisfy the relations
\begin{align}
\label{eq:relation1}
\xi_{rs}\xi_{rs'}=&\zeta\xi_{rs'}\xi_{rs}
&(s<s'),
\\
\label{eq:relation2}
\xi_{rs}\xi_{r's}=&\zeta\xi_{r's}\xi_{rs}
&(r<r'),
\\
\label{eq:relation3}
\xi_{rs}\xi_{r's'}=&\xi_{r's'}\xi_{rs}
&(r<r', s>s'),
\\
\label{eq:relation4}
\xi_{rs}\xi_{r's'}=&\xi_{r's'}\xi_{rs}
+(\zeta-\zeta^{-1})\xi_{r's}\xi_{rs'}
&(r<r', s<s').
\end{align}

Let 
$\varkappa:O_{K,\zeta}(G)\to U_{K,\zeta}(\Gn^+)^\bigstar$ be the linear map 
sending $\varphi\in O_{K,\zeta}(G)$ to $\varphi|_{U_{K,\zeta}(\Gn^+)}\in U_{K,\zeta}(\Gn^+)^\bigstar$.
By \eqref{eq:mult-n2} it is a homomorphism of $K$-algebras with respect to the $K$-algebra structure of 
$U_{K,\zeta}(\Gn^+)^\bigstar$ 
given by Lemma \ref{lem:OG} (iii).
We see easily that 
\[
\varkappa(\xi_{rr})=1_{U_{K,\zeta}(\Gn^+)^\bigstar}
=\varepsilon
\;\;(r=1,\dots, n),
\qquad
\varkappa(\xi_{rs})=0\;\;(r>s).
\]
Set $\txi_{rs}=\varkappa(\xi_{rs})
=(U_{K,\zeta}(\Gn^+)_{\alpha_{rs}})^*
\subset U_{K,\zeta}(\Gn^+)^\bigstar$ for $r<s$.
We have
\begin{align}
\label{eq:trel1}
\txi_{rs}\txi_{rs'}=&
\zeta\txi_{rs'}\txi_{rs}
&(r<s<s'),
\\
\label{eq:trel2}
\txi_{rs}\txi_{r's}=&
\zeta\txi_{r's}\txi_{rs}
&(r<r'<s),
\\
\label{eq:trel3}
\txi_{rs}\txi_{r's'}=&
\txi_{r's'}\txi_{rs}
&(r<r'<s'<s),
\\
\txi_{rs}\txi_{r's'}=&
\txi_{r's'}\txi_{rs}
&(r<s<r'<s'),
\\
\label{eq:trel4}
\txi_{rs}\txi_{ss'}=&
\txi_{ss'}\txi_{rs}
+(\zeta-\zeta^{-1})\txi_{rs'}
&(r<s<s'),
\\
\label{eq:trel5}
\txi_{rs}\txi_{r's'}=&
\txi_{r's'}\txi_{rs}
+(\zeta-\zeta^{-1})\txi_{r's}\txi_{rs'}
&(r<r'<s<s')
\end{align}
by 
\eqref{eq:relation1}, \dots \eqref{eq:relation4}.
\begin{lemma}
\label{lem:OM2}
The  ordered monomials:
\begin{equation}
\label{eq:om2}
\txi^{\Ba}
=
\txi_{n-1,n}^{a_{n-1,n}}
(\txi_{n-2,n-1}^{a_{n-2,n-1}}
\txi_{n-2,n}^{a_{n-2,n}})
(\txi_{n-3,n-2}^{a_{n-3,n-2}}
\txi_{n-3,n-1}^{a_{n-3,n-1}}
\txi_{n-3,n}^{a_{n-3,n}}
)
\cdots
(
\txi_{12}^{a_{12}}\cdots
\txi_{1n}^{a_{1n}}
)\end{equation}
for $\Ba=(a_{rs})_{r<s}\in\BZ_{\geqq0}^{n(n-1)/2}$ form a basis of $U_{K,\zeta}(\Gn^+)^\bigstar$.
\end{lemma}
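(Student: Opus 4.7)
The plan is to decouple the problem into a spanning argument and a dimension count. First, $U_{K,\zeta}(\Gn^+)^\bigstar = \bigoplus_{\gamma \in Q^+} (U_{K,\zeta}(\Gn^+)_\gamma)^*$ is naturally $Q^+$-graded as a $K$-algebra via \eqref{eq:mult-n2}, and each ordered monomial $\txi^\Ba$ with $\Ba = (a_{rs})$ lies in the graded piece of weight $\sum_{r<s} a_{rs}\alpha_{rs}$. Hence it suffices to work in a fixed weight $\gamma \in Q^+$ and show that the ordered monomials of that weight both span and are linearly independent in $(U_{K,\zeta}(\Gn^+)_\gamma)^*$.

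For spanning I would induct using the commutation relations \eqref{eq:trel1}--\eqref{eq:trel5}. Equip $\Delta^+$ with the total order implicit in the reading of \eqref{eq:om2}: declare $\alpha_{rs} \prec \alpha_{r's'}$ iff $r > r'$, or $r = r'$ and $s < s'$. Each relation either swaps two adjacent factors up to a scalar (\eqref{eq:trel1}--\eqref{eq:trel3}), or produces an error term whose constituent roots lie strictly between the two swapped roots in this order: \eqref{eq:trel4} yields an error $\txi_{rs'}$ of strictly smaller total degree, and \eqref{eq:trel5} yields $\txi_{r's}\txi_{rs'}$ whose indices $\alpha_{r's}, \alpha_{rs'}$ each lie strictly between $\alpha_{rs}$ and $\alpha_{r's'}$ in the total order. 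A lexicographic induction on the pair (total degree, sortedness of the index sequence) then rewrites any monomial in the $\txi_{rs}$ as a $K$-linear combination of the ordered monomials $\txi^\Ba$.

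For linear independence I would count dimensions. By Lemma \ref{lem:duality}, $\dim_K (U_{K,\zeta}(\Gn^+)_\gamma)^* = \dim_K U^\DK_{K,\zeta}(\Gn)_{-\gamma}$. The PBW theorem for $U^\DK_{K,\zeta}(\Gn)$ (for which the required non-vanishing $\zeta^{2d_i} \neq 1$ holds by the hypothesis of Theorem \ref{thm:intro}, since $d_i = 1$ in type $A$ and $\ell \geq n \geq 2$) identifies this dimension with the Kostant partition function value at $\gamma$, i.e., the number of $\Ba \in \BZ_{\geq 0}^{n(n-1)/2}$ with $\sum_{r<s} a_{rs} \alpha_{rs} = \gamma$. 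This is precisely the number of ordered monomials $\txi^\Ba$ of weight $\gamma$, and combined with spanning we obtain a basis.

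The main obstacle I anticipate is the bookkeeping in the spanning step: one must choose a numerical invariant on monomials so that each application of \eqref{eq:trel5} strictly decreases it, even though the error term still has two factors of the same total weight as the original. The crucial point is that the reading order underlying \eqref{eq:om2} is a convex order on $\Delta^+$ of type $A_{n-1}$, which guarantees that the two roots appearing in the error term of \eqref{eq:trel5} are both strictly sandwiched between the two swapped roots; this is what allows the induction to terminate. Once that combinatorial statement is pinned down, the rest is routine.
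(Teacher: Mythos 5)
Your overall plan --- pass to a fixed weight $\gamma$, use the commutation relations \eqref{eq:trel1}--\eqref{eq:trel5} to sort monomials, then match against a dimension count coming from Lemma \ref{lem:duality} and the PBW theorem for $U^\DK_{K,\zeta}(\Gn)$ --- is essentially the paper's, and the Kostant partition function identification is exactly right. But there is a genuine gap: you never establish that the $\txi_{rs}$ generate $U_{K,\zeta}(\Gn^+)^\bigstar$ as a $K$-algebra. Your spanning argument only shows that \emph{products of the $\txi_{rs}$} can be straightened into the ordered monomials \eqref{eq:om2}; it therefore shows that the ordered monomials span the subalgebra $A$ generated by the $\txi_{rs}$, not a priori all of $U_{K,\zeta}(\Gn^+)^\bigstar$. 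Combining this with the count $\dim (U_{K,\zeta}(\Gn^+)_\gamma)^* = N_\gamma$ only yields $\dim A_\gamma \leqq N_\gamma$, which does not preclude $A$ being a proper subalgebra with linearly dependent ordered monomials. You need one more input: either generation of $U_{K,\zeta}(\Gn^+)^\bigstar$ by the $\txi_{rs}$, or an independent proof that the ordered monomials are linearly independent.

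The missing fact is already contained in the lemma you cite. Lemma \ref{lem:duality} gives an anti-isomorphism $F: U^\DK_{K,\zeta}(\Gn) \to U_{K,\zeta}(\Gn^+)^\bigstar$ of $K$-algebras preserving the weight grading. Since $U^\DK_{K,\zeta}(\Gn)$ is generated by the $f_i$ ($i \in I$), the algebra $U_{K,\zeta}(\Gn^+)^\bigstar$ is generated by the one-dimensional spaces $F(Kf_i) = (U_{K,\zeta}(\Gn^+)_{\alpha_i})^* = K\txi_{i,i+1}$, and hence in particular by the $\txi_{rs}$. This is precisely the first step of the paper's proof; you invoke Lemma \ref{lem:duality} only for the equality $\dim_K (U_{K,\zeta}(\Gn^+)_\gamma)^* = \dim_K U^\DK_{K,\zeta}(\Gn)_{-\gamma}$ and do not draw the generation consequence. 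Once that is added, your rewriting argument shows spanning of the full space, the dimension count forces linear independence, and the proof is complete. (Your care about the convex order on $\Delta^+$ and the behavior of the error terms in \eqref{eq:trel4}--\eqref{eq:trel5} under that order is a useful elaboration of the paper's ``we see easily.'')
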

\begin{proof}
By Lemma \ref{lem:duality} the algebra $U_{K,\zeta}(\Gn^+)^\bigstar$ is generated by the one-dimensional subspaces 
$(U_{K,\zeta}(\Gn^+)_{\alpha_i})^*
=K{\txi}_{i,i+1}$ for $i\in I$.
Hence it is also generated by $\txi_{rs}$ for $r<s$.
Then we see easily by \eqref{eq:trel1},\dots,\eqref{eq:trel5} that the ordered monomials \eqref{eq:om2} span $U_{K,\zeta}(\Gn^+)^\bigstar$.
The linearly independence of 
\eqref{eq:om2} follows from the PBW type theorem which implies that for $\beta\in Q^+$ the dimension of 
$(U_{K,\zeta}(\Gn^+)_{\beta})^*$ coincides with the number of the ordered monomial \eqref{eq:om2} contained in $(U_{K,\zeta}(\Gn^+)_{\beta})^*$.
\end{proof}

\subsection{}
We regard 
the tensor algebra 
\[
TV=\bigoplus_{n=0}^\infty V^{\otimes n}
\]
of $V$ as a  $U_{K,\zeta}(\Gg)$-module through the iterated comultiplication.
We define $K$-algebras
$S_{\zeta}V$ and $\wedge_{\zeta} V$ as the quotient
\begin{align*}
S_\zeta V=&TV/(v_r\otimes v_s-\zeta v_s\otimes v_r\mid r<s),
\\
\wedge_\zeta V=
&TV/(v_r\otimes v_r, \;v_r\otimes v_s+\zeta^{-1} v_s\otimes v_r\mid r<s)
\end{align*}
of $TV$.
The multiplications of $S_\zeta V$ and $\wedge_\zeta V$ are denoted by
$(a,b)\mapsto ab$ 
and
$(a,b)\mapsto a\wedge b$ 
respectively
in the following.
We have a basis
\begin{gather*}
\{v_{r_1}v_{r_2}\cdots v_{r_p}
\mid
p\geqq0,\;
1\leqq r_1\leqq\cdots\leqq r_p\leqq n\},
\\
(\text{resp.}\;
\{v_{r_1}\wedge v_{r_2}\wedge\cdots \wedge v_{r_p}
\mid
0\leqq p\leqq n,\;
1\leqq r_1<\cdots< r_p\leqq n\}
)
\end{gather*}
of 
$S_\zeta V$ 
(resp.\ $\wedge_\zeta V$)
We have also  grading
\[
S_\zeta V
=\bigoplus_{p=0}^\infty S_\zeta^pV,
\qquad
\wedge_\zeta V
=\bigoplus_{p=0}^n \wedge_\zeta^pV
\]
by degrees.
It is easily seen that the $U_{K,\zeta}(\Gg)$-module structure of $TV$ induces those of 
$S_\zeta V$ and $\wedge_\zeta V$.

We define a linear map
\begin{equation}
\label{eq:deru}
\deru_p:S_\zeta V\otimes
\wedge_\zeta^{p+1}V
\to
S_\zeta V
\otimes
\wedge_\zeta^{p}V
\qquad(p=0,\dots, n-1)
\end{equation}
by
\begin{align*}
\deru_p(
d\otimes
v_{r_{1}}\wedge\cdots\wedge v_{r_{p+1}}
)
=&
\sum_{a=1}^{p+1}
(-\zeta)^{-(a-1)}
d v_{r_a}
\otimes
v_{r_1}\wedge\cdots\wedge
\widehat{v}_{r_a}
\wedge\cdots\wedge
v_{r_{p+1}}
\\
&\qquad\qquad
(r_1<\dots<r_{p+1}).
\end{align*}
We further define a $K$-algebra homomorphism
\begin{equation}
\label{eq:deru0}
\epsilon:S_\zeta V\to K
\end{equation}
by
\[
\epsilon(S_\zeta^pV)=0\qquad(p>0).
\]
By a straightforward calculation we see that $\deru_p$ and $\epsilon$ are homomorphisms of $U_{K,\zeta}(\Gg)$-modules.
Moreover, by a general theory of quadratic algebras we have the following result 
(see \cite{M2}, \cite{HH}).
\begin{proposition}
\label{prop:Koszul}
The sequence 
\[
0
\to 
S_\zeta V\otimes
\wedge_\zeta^nV
\xrightarrow{\deru_{n-1}}
S_\zeta V\otimes
\wedge_\zeta^{n-1}V
\xrightarrow{\deru_{n-2}}\cdots
\xrightarrow{\deru_{0}}
S_\zeta V\otimes
\wedge_\zeta^0V
\xrightarrow{\epsilon}
K\to0
\]
is exact.
\end{proposition}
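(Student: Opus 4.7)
The plan is to verify the complex property directly and then invoke the theory of Koszul quadratic algebras, applied to the Koszul-dual pair $(S_\zeta V, \wedge_\zeta V)$, to establish exactness.

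First I would check that $\deru_p$ and $\epsilon$ are well-defined on the quotient algebras and that $\deru_{p-1}\circ \deru_p = 0$. Well-definedness amounts to verifying compatibility with the defining quadratic relations: transposing two adjacent factors in the exterior argument picks up a sign $-\zeta^{-1}$, which is matched, after reordering in $S_\zeta V$ via $v_r v_s = \zeta v_s v_r$ ($r<s$), by the $(-\zeta)^{-(a-1)}$ coefficients in the formula for $\deru_p$. The identity $\deru_{p-1}\circ \deru_p = 0$ is then obtained by pairing, for each pair $a<b$ of successively excised indices, the two terms coming from the two possible orders of removal; the $\zeta$-weights cancel after commuting the two excised generators past one another in $S_\zeta V$.

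For exactness I would appeal to Manin's Koszul-duality machinery. The relations defining $S_\zeta V$ and $\wedge_\zeta V$ inside $V\otimes V$ are mutually orthogonal under the canonical pairing, hence $\wedge_\zeta V$ is the quadratic dual of $S_\zeta V$. The quantum symmetric algebra $S_\zeta V$ admits the PBW-type basis of ordered monomials $v_{i_1}\cdots v_{i_k}$ with $i_1\leq\cdots\leq i_k$, so by the standard PBW--Gr\"obner criterion $S_\zeta V$ is Koszul. The Koszul complex of a Koszul algebra with coefficients in its quadratic dual is by definition a free resolution of the trivial module $K$, which is precisely the sequence to be proved exact.

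The step requiring the most care, should one wish to bypass the general Koszul-algebra black box, is the construction of an explicit contracting homotopy $s_p\colon S_\zeta V\otimes \wedge_\zeta^p V \to S_\zeta V\otimes \wedge_\zeta^{p+1} V$ satisfying $\deru\circ s + s\circ \deru = \mu$, with $\mu$ a scalar that is nonzero on each bi-homogeneous component of positive total degree. The combinatorial bookkeeping of the $\zeta$-powers that arise when a generator is transferred between the tensor factors is the principal technical obstacle; once handled, exactness in positive degrees is immediate, and exactness at the two rightmost positions follows directly from the definitions of $\epsilon$ and $\deru_0$.
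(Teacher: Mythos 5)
Your proposal takes essentially the same route as the paper, which gives no argument of its own and simply cites Manin \cite{M2} and Hashimoto--Hayashi \cite{HH} for ``a general theory of quadratic algebras.'' Your sketch --- check the complex property, identify $\wedge_\zeta V$ as the quadratic dual of $S_\zeta V$, deduce Koszulness of $S_\zeta V$ from its PBW basis of ordered monomials, and then invoke exactness of the Koszul resolution --- is exactly what is packaged in that citation.
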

Hence the complex
$S_\zeta V\otimes\wedge_\zeta^\bullet V$
gives a resolution of $K$ 
both as an $S_\zeta V$-module and a 
$U_{K,\zeta}(\Gg)$-module.
Here, the action of $S_\zeta V$ and $U_{K,\zeta}(\Gg)$ on $K=K1$ is given by
\[
(S_\zeta^pV)1=0\quad(p>0),
\qquad
u1=\varepsilon(u)1\quad(u\in U_{K,\zeta}(\Gg)).
\]

We call 
$S_\zeta V\otimes\wedge_\zeta^\bullet V$
the Koszul complex for $U_{K,\zeta}(\Gg)$.

\section{Main result}
\label{sec:main}
\subsection{}
We continue to assume that the derived group $[G,G]$ of $G$ is isomorphic to $SL_n(\BC)$.
We will use the notation in Section \ref{sec:A}.
We pose the following assumption in the rest of this paper.
\begin{assumption}
The parameter $\zeta\in K^\times$  is transcendental over the prime field of $K$, or the multiplicative order $\ell$ of $\zeta^2$ is finite and satisfies
$\ell\geqq n$.
\end{assumption}
The aim of this paper is to show the following.
\begin{theorem}
\label{thm:main}
We have 
$R^i\Ind^{\Gg,\Gb}(O_{K,\zeta}(B)_\ad)=\{0\}$ for any $i\ne0$, and 
\[
\Upsilon^{I,\emptyset}:
O_{K,\zeta}(G)_\ad\otimes_{O_{K,\zeta}(H)^{W\circ}}O_{K,\zeta}(H)
\to
\Ind^{\Gg,\Gb}(O_{K,\zeta}(B)_\ad)
\]
is an isomorphism of $U_{K,\zeta}(\Gg)$-modules.
\end{theorem}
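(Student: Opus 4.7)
The plan is to prove Theorem~\ref{thm:main} by induction on $n$, the base case $n=1$ being trivial since then $\Gg=\Gh=\Gb$ and both sides of the asserted isomorphism equal $O_{K,\zeta}(H)$.

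For the inductive step, I first reduce the statement to an analogous one for a maximal parabolic. Fix a maximal proper subset $J\subsetneq I$, so that the derived group of $L_J$ is isomorphic to $SL_{n-1}(\BC)$. Using the decomposition $O_{K,\zeta}(B)\cong\tilde U_{K,\zeta}(\Gm_J)^\bigstar\otimes O_{K,\zeta}(B_J)$ from Lemma~\ref{lem:OPX}, Lemma~\ref{lem:IndPL}, and the induction hypothesis applied to $L_J$, one computes $R^i\Ind^{\Gp_J,\Gb}(O_{K,\zeta}(B)_\ad)$: it vanishes for $i>0$ and in degree zero is naturally identified with $O_{K,\zeta}(P_J)_\ad\otimes_{O_{K,\zeta}(H)^{W_J\circ}}O_{K,\zeta}(H)$. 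The Grothendieck spectral sequence attached to the factorization $\Ind^{\Gg,\Gb}=\Ind^{\Gg,\Gp_J}\circ\Ind^{\Gp_J,\Gb}$ (see \eqref{eq:Ind-trans}) then degenerates and reduces Theorem~\ref{thm:main} to proving
\[
R^i\Ind^{\Gg,\Gp_J}\bigl(O_{K,\zeta}(P_J)_\ad\otimes_{O_{K,\zeta}(H)^{W_J\circ}}O_{K,\zeta}(H)\bigr)=0\qquad(i>0),
\]
together with the identification of the $i=0$ term with $O_{K,\zeta}(G)_\ad\otimes_{O_{K,\zeta}(H)^{W\circ}}O_{K,\zeta}(H)$ via $\Upsilon^{I,\emptyset}$.

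To handle this remaining step, the idea is to construct, from the Koszul complex of Proposition~\ref{prop:Koszul}, a resolution of $O_{K,\zeta}(P_J)_\ad\otimes_{O_{K,\zeta}(H)^{W_J\circ}}O_{K,\zeta}(H)$ in $\Mod^{O_{K,\zeta}(P_J)}_\inte(U_{K,\zeta}(\Gp_J))$ whose terms, after applying $\Ind^{\Gg,\Gp_J}$ (and pulling out the equivariant $O_{K,\zeta}(G)_\ad$-factor via Lemma~\ref{lem:TTI}), reduce by transitivity of induction and Lemma~\ref{lem:IndPL} to computing $R^\bullet\Ind^{\Gg,\Gb}(K_\lambda)$ for weights $\lambda$ built out of suitable twists of the weights of $\wedge_\zeta^\bullet V$.

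These weights are small in the sense that $|\langle\lambda+\rho,\alpha^\vee\rangle|$ stays in the range where the hypothesis $\ell\geqq n$ guarantees applicability of the small-weight Borel-Weil-Bott theorem (Proposition~\ref{prop:BBW}, or Proposition~\ref{prop:BBW0} in the transcendental case). Each $R^j\Ind^{\Gg,\Gb}(K_\lambda)$ is then either zero or a single Weyl dual $V^*_{K,\zeta}(w\circ\lambda)$ in cohomological degree $\ell(w)$, and by Kostant's parametrization (Lemma~\ref{lem:Kos}) the surviving contributions are indexed by $W$ and assemble to a copy of $O_{K,\zeta}(H)$ as an $O_{K,\zeta}(H)^{W\circ}$-module. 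The main obstacle will be the final spectral sequence bookkeeping: one must verify that the cancellations coming from the alternating differentials $\deru_p$ produce precisely the target $O_{K,\zeta}(G)_\ad\otimes_{O_{K,\zeta}(H)^{W\circ}}O_{K,\zeta}(H)$ with the correct $U_{K,\zeta}(\Gg)$-module structure, and that the resulting identification coincides with $\Upsilon^{I,\emptyset}$. The sharpness of the bound $\ell\geqq n$ reflects the sharpness of the range of weights for which Proposition~\ref{prop:BBW} is directly applicable.
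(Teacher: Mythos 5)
Your overall plan is the same as the paper's: induction on $n$, reduction to a maximal parabolic $P_J$ with Levi of derived type $A_{n-2}$ via the degenerate Grothendieck spectral sequence for $\Ind^{\Gg,\Gb}=\Ind^{\Gg,\Gp_J}\circ\Ind^{\Gp_J,\Gb}$, then a Koszul-type resolution of $O_{K,\zeta}(P_J)_\ad$ together with the small-weight Borel--Weil--Bott theorem. However, several technical points in the middle and final steps are either imprecise or missing in a way that matters.

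First, the Koszul complex must be formed from the $(n-1)$-dimensional $\ad(U_{K,\zeta}(\Gp_J))$-stable subspace $V_J=\sum_{r\geqq2}K\xi_{1r}$ of $O_{K,\zeta}(G)$ (not from the full $n$-dimensional $V$), and the resulting resolution $O_{K,\zeta}(G)\Breve{\otimes}(\wedge_\zeta^\bullet V_J)_\ad$ of $O_{K,\zeta}(P_J)_\ad$ lives in $\Mod^{O_{K,\zeta}(G)}_\inte(U_{K,\zeta}(\Gp_J))$, not in $\Mod^{O_{K,\zeta}(P_J)}_\inte(U_{K,\zeta}(\Gp_J))$ as you state. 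This distinction is not cosmetic: Lemma~\ref{lem:TTI} lets you pull a $U_{K,\zeta}(\Gp_{J_2})$-bimodule factor through $\Ind^{\Gp_{J_2},\Gp_{J_1}}$, so to extract the $O_{K,\zeta}(G)_\ad$-factor through $\Ind^{\Gg,\Gp_J}$ (or $\Ind^{\Gg,\Gb}$) you need the bimodule to be $O_{K,\zeta}(G)$, which $O_{K,\zeta}(P_J)$ is not. Establishing that $O_{K,\zeta}(G)\Breve{\otimes}(\wedge_\zeta^\bullet V_J)_\ad$ really resolves $O_{K,\zeta}(P_J)_\ad$ requires the presentation $O_{K,\zeta}(P_J)\cong O_{K,\zeta}(G)/\sum_r O_{K,\zeta}(G)\xi_{1r}$ (Proposition~\ref{prop:OPJ}) and flatness of $\hat{O}_{K,\zeta}(G)$ over the subalgebra $R$ (Lemma~\ref{lem:R-free}) together with the localization trick of Corollary~\ref{cor:FF}; none of these intermediate structural facts appears in your sketch.

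Second, and more seriously, the ``bookkeeping'' step you defer is genuinely the heart of the proof, and your description of it is off. There are no cancellations among the $\deru_p$: the point is that each $R^p\Ind^{\Gg,\Gb}(O_{K,\zeta}(G)\Breve{\otimes}(\wedge_\zeta^a V_J)_\ad)$ is concentrated in the single degree $p=a$ (by Proposition~\ref{prop:BBW} together with Kostant's Lemma~\ref{lem:Kos}), so that one obtains a filtration $\{0\}=F_{-1}\subset F_0\subset\cdots\subset F_{n-1}=\Ind(O_{K,\zeta}(P_J)_\ad)$ with each $F_a/F_{a-1}\cong O_{K,\zeta}(G)$. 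The surviving weights are exactly $n$ in number, indexed by $a=0,\dots,n-1$ (equivalently by minimal-length representatives of $W_J\backslash W$), not by all of $W$; and they assemble to $O_{K,\zeta}(H)^{W_J\circ}$ over $O_{K,\zeta}(H)^{W\circ}$, not directly to $O_{K,\zeta}(H)$. Matching this filtration with $\overline{\Upsilon}^{I,J}$ requires the Steinberg--Pittie type freeness statement that $O_{K,\zeta}(H)^{W_J\circ}$ is a free $O_{K,\zeta}(H)^{W\circ}$-module with basis $\{\chi_{a\epsilon_1}\}_{a=0}^{n-1}$, which is nowhere in your outline, together with the verification that the restriction $\upsilon_a$ of $\overline{\Upsilon}^{I,J}$ to $O_{K,\zeta}(G)_\ad\otimes K\chi_{a\epsilon_1}$ lands in $F_a$ and induces an isomorphism onto $F_a/F_{a-1}$. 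The latter requires comparing the Koszul resolution of the graded piece $K\overline{\xi_{11}^a}$ with the truncations $M_a$ and an auxiliary vanishing for the off-diagonal terms $R^{a-k}\Breve{\otimes}(\wedge_\zeta^k V_J)_\ad$. In short: you have the right skeleton, but the content that actually makes the theorem true --- the Steinberg--Pittie freeness and the $F_a$/$\upsilon_a$ matching --- is absent, and your picture of where the $O_{K,\zeta}(H)$-factor comes from (``indexed by $W$'') is incorrect.
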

Theorem \ref{thm:main} is equivalent to Theorem \ref{thm:intro} in Introduction by Proposition \ref{prop:comod-int}.

\subsection{}
We are going to show Theorem \ref{thm:main} by induction on $n$.
In the case $n=1$ the assertion is obvious since 
the derived group of $G$ is isomorphic to $SL_1(\BC)=\{1\}$.
We will show 
Theorem \ref{thm:main} for $n$ assuming that for $n-1$.
Set 
\[
J=\{2,\dots, n-1\}\subset I.\]
Let us first confirm that the hypothesis of induction implies the following.
\begin{proposition}
\label{prop:hyp}
We have 
$R^i\Ind^{\Gp_J,\Gb}(O_{K,\zeta}(B)_\ad)=\{0\}$ for any $i\ne0$, and 
\[
\Upsilon^{J,\emptyset}:
O_{K,\zeta}(P_J)_\ad\otimes_{O_{K,\zeta}(H)^{W_J\circ}}O_{K,\zeta}(H)
\to
\Ind^{\Gp_J,\Gb}(O_{K,\zeta}(B)_\ad)
\]
is an isomorphism of $U_{K,\zeta}(\Gp_J)$-modules.
\end{proposition}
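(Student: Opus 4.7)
The plan is to reduce everything to the Levi $L_J$, apply the inductive hypothesis there, and then reassemble. The inductive hypothesis, namely Theorem \ref{thm:main} for rank $n-1$, is available for $L_J$ because $[L_J,L_J]\cong SL_{n-1}(\BC)$ and the standing assumption $\ell\geqq n$ forces $\ell\geqq n-1$.

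First I would invoke Lemma \ref{lem:IndPL}: as a $U_{K,\zeta}(\Gl_J)$-module, $R^i\Ind^{\Gp_J,\Gb}(O_{K,\zeta}(B)_\ad)$ coincides with $R^i\Ind^{\Gl_J,\Gb_J}$ applied to the restriction of $O_{K,\zeta}(B)_\ad$ to $U_{K,\zeta}(\Gb_J)$. By Lemma \ref{lem:OPX}, multiplication produces an isomorphism $O_{K,\zeta}(B)\cong \tilde{U}_{K,\zeta}(\Gm_J)^\bigstar\otimes O_{K,\zeta}(B_J)$ of $K$-modules, and Lemma \ref{lem:adLPX} ensures that each factor is stable under the adjoint action of $U_{K,\zeta}(\Gb_J)$, with the resulting action on the product given by the twisted formula of Lemma \ref{lem:adLPX}(iii). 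This realizes $O_{K,\zeta}(B)_\ad$ as a twisted tensor product $\tilde{U}_{K,\zeta}(\Gm_J)^\bigstar \Breve{\otimes} O_{K,\zeta}(B_J)_\ad$ in the sense of Section 6.

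Next I would apply the projection formula (a variant of Lemmas \ref{lem:TI} and \ref{lem:TTI} adapted to this twisted setup) to pull the $\tilde{U}_{K,\zeta}(\Gm_J)^\bigstar$-factor out of the derived induction, obtaining
\[
R^i\Ind^{\Gl_J,\Gb_J}(\tilde{U}_{K,\zeta}(\Gm_J)^\bigstar \Breve{\otimes} O_{K,\zeta}(B_J)_\ad) \cong \tilde{U}_{K,\zeta}(\Gm_J)^\bigstar \Breve{\otimes} R^i\Ind^{\Gl_J,\Gb_J}(O_{K,\zeta}(B_J)_\ad).
\]
By the inductive hypothesis the right-hand induction vanishes for $i>0$ and is isomorphic to $O_{K,\zeta}(L_J)_\ad \otimes_{O_{K,\zeta}(H)^{W_J\circ}} O_{K,\zeta}(H)$ for $i=0$; combining with the decomposition $O_{K,\zeta}(P_J)\cong\tilde{U}_{K,\zeta}(\Gm_J)^\bigstar\otimes O_{K,\zeta}(L_J)$ from Lemma \ref{lem:OP}(i), this reassembles on the Levi into $O_{K,\zeta}(P_J)_\ad \otimes_{O_{K,\zeta}(H)^{W_J\circ}} O_{K,\zeta}(H)$.

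Finally, vanishing of $R^i\Ind^{\Gp_J,\Gb}(O_{K,\zeta}(B)_\ad)$ after restriction to $U_{K,\zeta}(\Gl_J)$ forces vanishing as a $U_{K,\zeta}(\Gp_J)$-module, since restriction along an inclusion of Hopf algebras is faithful. For $i=0$, the canonical homomorphism $\Upsilon^{J,\emptyset}$ is $U_{K,\zeta}(\Gp_J)$-equivariant by construction and its Levi-restriction agrees with the isomorphism just obtained, hence $\Upsilon^{J,\emptyset}$ itself is an isomorphism. The main obstacle is the middle stage: one must endow $\tilde{U}_{K,\zeta}(\Gm_J)^\bigstar$ with an integrable $U_{K,\zeta}(\Gp_J)$-module structure (extending the $\ad(U_{K,\zeta}(\Gl_J))$-stability of Lemma \ref{lem:adLP}(ii)) through which the induction commutes, and one must formulate and verify the twisted tensor identity for $\Breve{\otimes}$ in a form strong enough to hold for $R^i\Ind^{\Gl_J,\Gb_J}$, not merely at the level of global sections.
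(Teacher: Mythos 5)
Your proposal takes essentially the same route as the paper: Lemma \ref{lem:IndPL}, the factorization $O_{K,\zeta}(B)\cong \tilde{U}_{K,\zeta}(\Gm_J)^\bigstar\otimes O_{K,\zeta}(B_J)$ from Lemmas \ref{lem:OPX} and \ref{lem:adLPX}, a tensor identity to pull out $\tilde{U}_{K,\zeta}(\Gm_J)^\bigstar$, the inductive hypothesis on the Levi, and reassembly via Lemmas \ref{lem:OP} and \ref{lem:adLP}. However, the ``main obstacle'' you flag at the end is not actually there, and the way you set it up creates a detour. Two points. First, after Lemma \ref{lem:IndPL} the whole computation happens over $U_{K,\zeta}(\Gl_J)$ (resp.\ $U_{K,\zeta}(\Gb_J)$), not $U_{K,\zeta}(\Gp_J)$: one never needs a $U_{K,\zeta}(\Gp_J)$-module structure on $\tilde{U}_{K,\zeta}(\Gm_J)^\bigstar$; the adjoint $U_{K,\zeta}(\Gl_J)$-stability of Lemma \ref{lem:adLP}(ii) is what is needed, and integrability follows since $\tilde{U}_{K,\zeta}(\Gm_J)^\bigstar$ sits inside the integrable module $O_{K,\zeta}(P_J)$. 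Second, you need no twisted $\Breve{\otimes}$-version of the tensor identity. The twist in Lemma \ref{lem:adLPX}(iii), $\ad(u)(\psi\varphi)=\sum_{(u)}\ad(u_{(1)})(\psi)\,\ad(u_{(0)})(\varphi)$, disappears upon reordering the tensor factors: under $\varphi\otimes\psi\mapsto\psi\varphi$ the induced action on $O_{K,\zeta}(B_J)_\ad\otimes\tilde{U}_{K,\zeta}(\Gm_J)^\bigstar$ is exactly the standard comultiplication action of $U_{K,\zeta}(\Gb_J)$ on a tensor product of two left modules. The paper therefore writes the isomorphism as $O_{K,\zeta}(B)_\ad\cong O_{K,\zeta}(B_J)_\ad\otimes\tilde{U}_{K,\zeta}(\Gm_J)^\bigstar$ and invokes the ordinary tensor identity, Lemma \ref{lem:TI}, with $V=\tilde{U}_{K,\zeta}(\Gm_J)^\bigstar$ an integrable $U_{K,\zeta}(\Gl_J)$-module. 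This gives $R^i\Ind^{\Gl_J,\Gb_J}(O_{K,\zeta}(B_J)_\ad)\otimes\tilde{U}_{K,\zeta}(\Gm_J)^\bigstar$ directly, with no new lemma to formulate or prove. Your final two observations---that vanishing over $\Gl_J$ gives vanishing over $\Gp_J$ since the underlying $K$-module is the same, and that $\Upsilon^{J,\emptyset}$ is identified by comparing its Levi restriction to the isomorphism just constructed---are correct and match the paper.
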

\begin{proof} We have isomorphisms
\begin{align*}
&
R^i\Ind^{\Gp_J,\Gb}(O_{K,\zeta}(B)_\ad)
\\
\cong&
R^i\Ind^{\Gl_J,\Gb_J}(O_{K,\zeta}(B)_\ad)
\quad&(\text{Lemma \ref{lem:IndPL}})
\\
\cong&
R^i\Ind^{\Gl_J,\Gb_J}(
O_{K,\zeta}(B_J)_\ad\otimes
\tilde{U}_{K,\zeta}(\Gm_J)^\bigstar)
&(\text{Lemma \ref{lem:OPX}, Lemma \ref{lem:adLPX}})
\\
\cong&
R^i\Ind^{\Gl_J,\Gb_J}(O_{K,\zeta}(B_J)_\ad)
\otimes
\tilde{U}_{K,\zeta}(\Gm_J)^\bigstar
&(\text{Lemma \ref{lem:TI}})
\end{align*}
of $U_{K,\zeta}(\Gl_J)$-modules.
Note that by Lemma \ref{lem:OPX} and Lemma \ref{lem:adLPX} (iii) 
we have an isomorphism 
\[
O_{K,\zeta}(B_J)_\ad\otimes
\tilde{U}_{K,\zeta}(\Gm_J)^\bigstar
\cong
O_{K,\zeta}(B)_\ad
\qquad(\varphi\otimes\psi\mapsto \psi\varphi)
\]
of $U_{K,\zeta}(\Gb_J)$-modules.
Since the derived group of $L_J$ is isomorphic to $SL_{n-1}(\BC)$ we have 
\[
R^i\Ind^{\Gl_J,\Gb_J}(O_{K,\zeta}(B_J)_\ad)
\cong
\begin{cases}
O_{K,\zeta}(L_J)_\ad\otimes_{O_{K,\zeta}(H)^{W_J\circ}}O_{K,\zeta}(H)
\quad&(i=0)
\\
0\quad&(i>0)
\end{cases}
\]
by the hypothesis of induction.
Hence 
we have
$R^i\Ind^{\Gp_J,\Gb}(O_{K,\zeta}(B)_\ad)=\{0\}$ for $i\ne0$.
Moreover, by
Lemma \ref{lem:OP}, Lemma \ref{lem:adLP} we obtain 
an isomorphism
\[
\Ind^{\Gp_J,\Gb}(O_{K,\zeta}(B)_\ad)
\cong
O_{K,\zeta}(P_J)_\ad\otimes_{O_{K,\zeta}(H)^{W_J\circ}}O_{K,\zeta}(H)
\]
of $U_{K,\zeta}(\Gl_J)$-modules.
We can easily check that this isomorphism coincides with $\Upsilon^{J,\emptyset}$.
\end{proof}

By Proposition \ref{prop:hyp} and \eqref{eq:Ind-trans} we have
\[
R^i\Ind^{\Gg,\Gb}(O_{K,\zeta}(B)_\ad)
\cong
R^i\Ind^{\Gg,\Gp_J}
(O_{K,\zeta}(P_J)_\ad)\otimes_{O_{K,\zeta}(H)^{W_J\circ}}O_{K,\zeta}(H),
\]
and hence we have only to show that 
$R^i\Ind^{\Gg,\Gp_J}(O_{K,\zeta}(P_J)_\ad)=\{0\}$ for $i>0$, and that
\[
\Upsilon^{I,J}:
O_{K,\zeta}(G)_\ad\otimes_{O_{K,\zeta}(H)^{W\circ}}O_{K,\zeta}(H)^{W_J\circ}
\to
\Ind^{\Gg,\Gp_J}(O_{K,\zeta}(P_J)_\ad)
\]
is an isomorphism.
By Proposition \ref{prop:IndM} we have
\[
R^i\Ind^{\Gp_J,\Gb}(O_{K,\zeta}(P_J)_\ad)
\cong
\begin{cases}
O_{K,\zeta}(P_J)_\ad
\quad&(i=0)
\\
0\quad&(i>0),
\end{cases}
\]
and hence we obtain
\[
R^i\Ind^{\Gg,\Gb}(O_{K,\zeta}(P_J)_\ad)
\cong
R^i\Ind^{\Gg,\Gp_J}(O_{K,\zeta}(P_J)_\ad)
\]
by \eqref{eq:Ind-trans}.
Therefore, the proof is reduced to showing 
\begin{align}
\label{eq:Claim}
R^i\Ind^{\Gg,\Gb}(O_{K,\zeta}(P_J)_\ad)
\cong
\begin{cases}
O_{K,\zeta}(G)_\ad\otimes_{O_{K,\zeta}(H)^{W\circ}}O_{K,\zeta}(H)^{W_J\circ}
\quad&(i=0)
\\
\{0\}
\quad&(i>0),
\end{cases}
\end{align}
where the isomorphism for $i=0$ is given by 
\begin{equation}
\label{eq:oUpsilon}
\overline{\Upsilon}^{I,J}:
O_{K,\zeta}(G)_\ad\otimes_{O_{K,\zeta}(H)^{W\circ}}O_{K,\zeta}(H)^{W_J\circ}
\to
\Ind^{\Gg,\Gb}(O_{K,\zeta}(P_J)_\ad)
\end{equation}
sending $\varphi\otimes\chi$ to 
\[
[U_{K,\zeta}(\Gg)\ni u\mapsto
\res(\ad(u)(\varphi))\iota_J(\chi)
\in O_{K,\zeta}(P_J)]
\in\Ind^{\Gg,\Gb}(O_{K,\zeta}(P_J)_\ad).
\]
Here, $\res:O_{K,\zeta}(G)\to O_{K,\zeta}(P_J)$ is the Hopf algebra homomorphism induced by 
$U_{K,\zeta}(\Gp_J)\hookrightarrow
U_{K,\zeta}(\Gg)$.
\subsection{}
By \eqref{eq:relation1} we have
\begin{equation}
\label{eq:x-com}
\xi_{1r}\xi_{1s}=\zeta\xi_{1s}\xi_{1r}
\qquad(1\leqq r<s).
\end{equation}
We have also 
\begin{equation}
\label{eq:x-tx}
\xi_{11}=\chi_{\epsilon_1},
\qquad
\xi_{1r}=\zeta^{-1}\chi_{\epsilon_1}\txi_{1r}
\qquad(r\geqq2).
\end{equation}
\begin{proposition}
\label{prop:OPJ}
We have 
\[
O_{K,\zeta}(P_J)\cong
O_{K,\zeta}(G)/
\sum_{r=2}^n
O_{K,\zeta}(G)\xi_{1r}.
\]
\end{proposition}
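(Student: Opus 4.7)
The plan is to identify the kernel of the natural surjective Hopf algebra homomorphism $\pi\colon O_{K,\zeta}(G)\to O_{K,\zeta}(P_J)$ induced by $U_{K,\zeta}(\Gp_J)\hookrightarrow U_{K,\zeta}(\Gg)$ with the left ideal $\CI:=\sum_{r=2}^n O_{K,\zeta}(G)\xi_{1r}$, proving it first in the localization $\hat O_{K,\zeta}(G)$ and then descending. The containment $\CI\subset\ker(\pi)$ is immediate from the triangular decomposition \eqref{eq:tri-p1}: every element of $U_{K,\zeta}(\Gp_J)$ is a sum of products $yhx$ with $y\in\tilde{U}_{K,\zeta}(\Gn)$, $h\in U_{K,\zeta}(\Gh)$, $x\in U_{K,\zeta}(\Gn^+_J)$, and since $v_1^*\cdot f_i=0$ for every $i\in I$ (because $i+1\neq 1$) and $v_1^*\cdot e_j=0$ for every $j\in J=\{2,\dots,n-1\}$ (because $j\neq 1$), one computes
\[
\xi_{1r}(yhx)=\langle v_1^*\cdot yhx,v_r\rangle=\varepsilon(y)\chi_{\epsilon_1}(h)\varepsilon(x)\delta_{1,r},
\]
which vanishes for $r\geqq 2$.

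Next I would pass to the localization $\hat{O}_{K,\zeta}(G)=\CS^{-1}O_{K,\zeta}(G)$ and show that $\ker(\hat\pi)=\CS^{-1}\CI$. By \eqref{eq:x-tx}, $\xi_{1r}=\zeta^{-1}\chi_{\epsilon_1}\txi_{1r}$ for $r\geqq 2$, and $\chi_{\epsilon_1}\in\CS$ is invertible in $\hat O_{K,\zeta}(G)$, so
\[
\CS^{-1}\CI=\sum_{r=2}^n\hat O_{K,\zeta}(G)\,\txi_{1r}.
\]
Under the triangular decompositions \eqref{eq:C-tri} and \eqref{eq:C-tri-p}, the restriction $\hat\pi\colon\hat O_{K,\zeta}(G)\to\hat O_{K,\zeta}(P_J)$ is the identity on $\tilde{U}_{K,\zeta}(\Gn)^\bigstar\otimes O_{K,\zeta}(H)$ tensored with the map $U_{K,\zeta}(\Gn^+)^\bigstar\to U_{K,\zeta}(\Gn^+_J)^\bigstar$ dual to the inclusion $U_{K,\zeta}(\Gn^+_J)\hookrightarrow U_{K,\zeta}(\Gn^+)$. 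By the PBW-type basis of Lemma \ref{lem:OM2} the kernel of this latter map is spanned by those ordered monomials in which at least one factor $\txi_{1r}$ ($r\geqq 2$) appears; since the $\txi_{1r}$ occur rightmost in the chosen ordering and quasi-commute with one another via \eqref{eq:trel1}, this kernel coincides with $\sum_{r\geqq 2}U_{K,\zeta}(\Gn^+)^\bigstar\,\txi_{1r}$. Combining this with \eqref{eq:com1} to move $\txi_{1r}$ past $O_{K,\zeta}(H)$, one obtains $\ker(\hat\pi)=\sum_{r\geqq 2}\hat O_{K,\zeta}(G)\,\txi_{1r}=\CS^{-1}\CI$.

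Finally, the main obstacle is the descent from $\hat O$ back to $O$. Once $\ker(\hat\pi)=\CS^{-1}\CI$ is established, the natural map factors as $O_{K,\zeta}(G)/\CI\twoheadrightarrow O_{K,\zeta}(P_J)\hookrightarrow\hat O_{K,\zeta}(P_J)\cong\hat O_{K,\zeta}(G)/\CS^{-1}\CI$, so the proposition reduces to showing $O_{K,\zeta}(G)\cap\CS^{-1}\CI=\CI$, i.e.\ that $\CI$ is saturated with respect to $\CS$. I would prove this using the bigrading on $O_{K,\zeta}(G)$ from the left and right $U_{K,\zeta}(\Gh)$-actions --- which is respected both by $\CI$ and by multiplication by $\chi_\lambda$ --- together with the fact that $\chi_\lambda$ quasi-commutes up to a scalar with each $\xi_{1r}$ (a consequence of the relations \eqref{eq:relation1}--\eqref{eq:relation4} applied recursively, together with the realization of $\chi_\lambda$ as a matrix coefficient of a Weyl module): on each weight component one rewrites an identity $\chi_\lambda x=\sum_r a_r\xi_{1r}$ and exploits the quasi-commutativity together with the injectivity of left multiplication by $\chi_\lambda$ on $O_{K,\zeta}(G)$ to exhibit $x$ itself as an element of $\CI$.
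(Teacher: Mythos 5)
Your first two steps track the paper's proof closely and are correct: you verify $\xi_{1r}\in\ker\pi$ for $r\geq 2$ by pairing against the triangular decomposition (the paper simply says this is ``easily seen''), and you compute $\ker\hat\pi=\CS^{-1}\CI$ using \eqref{eq:x-tx}, the PBW basis of Lemma \ref{lem:OM2}, and the decompositions \eqref{eq:C-tri}, \eqref{eq:C-tri-p}, which is precisely the calculation in the paper.

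The gap is in your descent step. You correctly reduce the proposition to the saturation statement $O_{K,\zeta}(G)\cap\CS^{-1}\CI=\CI$, i.e.\ that $\chi_\lambda\bar x=0$ in $O_{K,\zeta}(G)/\CI$ forces $\bar x=0$. But your proposed proof invokes ``the injectivity of left multiplication by $\chi_\lambda$ on $O_{K,\zeta}(G)$,'' which is the wrong statement: injectivity on $O_{K,\zeta}(G)$ is easy (it embeds in $\hat O_{K,\zeta}(G)$ where $\chi_\lambda$ is a unit), but what you need is injectivity on the \emph{quotient} $O_{K,\zeta}(G)/\CI$, and there is no apparent way to derive the latter from the former. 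Quasi-commutation $\chi_\lambda\xi_{1r}=\zeta^{\langle\lambda,\jmath(\alpha_{1r})\rangle}\xi_{1r}\chi_\lambda$ and the bigrading tell you that multiplication by $\chi_\lambda$ preserves $\CI$ and is weight-homogeneous, but an identity $\chi_\lambda x=\sum_r a_r\xi_{1r}$ does not let you factor $\chi_\lambda$ out of the coefficients $a_r$; nothing forces the $a_r$ to lie in $\chi_\lambda O_{K,\zeta}(G)$. The paper instead appeals to Corollary \ref{cor:FF}, which says that the localization functor $\CS^{-1}$ is fully faithful (in particular conservative) on the equivariant category $\Mod^{O_{K,\zeta}(G)}_\inte(U_{K,\zeta}(\Gb))$; this in turn rests on Proposition \ref{prop:lz}, whose proof uses the nontrivial input \cite[Corollary 8.8]{TA}. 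To make this line of reasoning work you must notice that $\sum_r K\xi_{1r}$ is $\ad(\Gb)$-stable, so that $f\colon O_{K,\zeta}(G)/\CI\to O_{K,\zeta}(P_J)$ is a morphism in that equivariant category, and then invoke the conservativity of $\CS^{-1}$ there; a bare saturation argument at the level of rings and weights does not suffice.
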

\begin{proof}
It is easily seen that $\xi_{1r}\in\Ker(O_{K,\zeta}(G)\to O_{K,\zeta}(P_J))$ for $r\geqq2$.
Hence we have a surjection
$f:O_{K,\zeta}(G)/
\sum_{r=2}^n
O_{K,\zeta}(G)\xi_{1r}\to
O_{K,\zeta}(P_J)$.
We need to show that $f$ is an isomorphism.
Since $\sum_{r=2}^nK\xi_{1r}$ is $\ad(\Gb)$-invariant, 
$f$ is a morphism in $\Mod^{O_{K,\zeta}(G)}_\inte(U_{K,\zeta}(\Gb))$.
Hence by Corollary \ref{cor:FF}  it is sufficient to show that $\CS^{-1}f$ is an isomorphism.
Namely, we have only to show that the canonical homomorphism
\[
\hat{O}_{K,\zeta}(G)/
\sum_{r=2}^n
\hat{O}_{K,\zeta}(G)\xi_{1r}
\to
\hat{O}_{K,\zeta}(P_J).
\]
is an isomorphism.
By \eqref{eq:x-tx}, 
\eqref{eq:C-tri}, Lemma \ref{lem:OM2}, \eqref{eq:C-tri-p} we have
\begin{align*}
&\hat{O}_{K,\zeta}(G)/
\sum_{r=2}^n
\hat{O}_{K,\zeta}(G)\xi_{1r}
=
\hat{O}_{K,\zeta}(G)/
\sum_{r=2}^n
\hat{O}_{K,\zeta}(G)\txi_{1r}
\\
\cong&
\tilde{U}_{K,\zeta}(\Gn)^\bigstar
\otimes
O_{K,\zeta}(H)
\otimes
\left(
{U}_{K,\zeta}(\Gn^+)^\bigstar
/\sum_{r=2}^n
{U}_{K,\zeta}(\Gn^+)^\bigstar\txi_{1r}\right)
\\
\cong&
\tilde{U}_{K,\zeta}(\Gn)^\bigstar
\otimes
O_{K,\zeta}(H)
\otimes
{U}_{K,\zeta}(\Gn_J^+)^\bigstar
\cong \hat{O}_{K,\zeta}(P_J).
\end{align*}
We are done.
\end{proof}

We denote by $R$ the subalgebra of $O_{K,\zeta}(G)$ generated by $\xi_{11}, \xi_{12},\dots, \xi_{1n}$.
It is a $U_{K,\zeta}(\Gp_J)$-equivariant $K$-subalgebra of 
$O_{K,\zeta}(G)$.
By \eqref{eq:x-tx}, \eqref{eq:com1} and
Lemma \ref{lem:OM2}
the ordered monomials
$
\xi_{11}^{a_{1}}
\xi_{12}^{a_{2}}
\cdots
\xi_{1n}^{a_{n}}
$ for $(a_1,\dots, a_n)\in \BZ_{\geqq0}^n$
form a $K$-basis of $R$.
We denote by 
\[
R=\bigoplus_{p=0}^\infty R^p
\]
the grading given by the degrees of the ordered monomials.
We set
\[
\overline{R}=R/\sum_{r=2}^nR\xi_{1r}
\cong K[\xi_{11}].
\]
It is a  
quotient of $R$ in $\Mod_\inte^R(U_{K,\zeta}(\Gp_J))$.

For $p\in\BZ$ let $\chi^J_{p\epsilon_1}:U_{K,\zeta}(\Gp_J)\to K$ be the character given by
\[
h\mapsto\chi_{p\epsilon_1}(h)
\quad(h\in U_{K,\zeta}(\Gh)), 
\qquad
e_j^{(n)}, f_i^{(n)}\mapsto 0
\quad(i\in I,\; j\in J,\; n\geqq1).
\]
We see easily the following.
\begin{lemma}
\label{lem:R}
As a graded $K$-algebra and a left $U_{K,\zeta}(\Gp_J)$-module, $R$ is isomorphic to $S_\zeta V$ 
via the correspondence
\[
R\ni\xi_{1r}\longleftrightarrow v_r\in S_\zeta V
\qquad(1\leqq r\leqq n).
\]
The right action of $U_{K,\zeta}(\Gp_J)$ on $R$ is given by 
\[
zu=\chi^J_{p\epsilon_1}(u)z
\qquad(z\in R^p, u\in U_{K,\zeta}(\Gp_J)).
\]
\end{lemma}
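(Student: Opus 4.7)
The strategy is to identify the degree-one part $R^1 = \bigoplus_{r=1}^n K\xi_{1r}$ with the vector module $V$ via $\xi_{1r} \leftrightarrow v_r$, then bootstrap to all degrees by recognizing both $R$ and $S_\zeta V$ as quadratic $U_{K,\zeta}(\Gg)$-equivariant algebras generated in degree one with matching relations. For the left-module identification in degree one, the definition $\xi_{1r} = \Phi_V(v_1^* \otimes v_r)$ together with the bimodule structure \eqref{eq:O-bim} of $U_{K,\zeta}(\Gg)^*$ yields directly $u \cdot \xi_{1r} = \Phi_V(v_1^* \otimes u v_r)$, so the map $V \to R^1$, $v_r \mapsto \xi_{1r}$, is a $U_{K,\zeta}(\Gg)$-module isomorphism.

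By \eqref{eq:F1} the multiplication of $O_{K,\zeta}(G)$ is a morphism of $U_{K,\zeta}(\Gg)$-modules, so the inclusion $V \cong R^1 \hookrightarrow R$ extends to a surjective $U_{K,\zeta}(\Gg)$-equivariant graded algebra homomorphism $TV \twoheadrightarrow R$. The defining relations $v_r v_s = \zeta v_s v_r$ ($r<s$) of $S_\zeta V$ hold in $R$ by \eqref{eq:x-com}, so this descends to a surjection $S_\zeta V \twoheadrightarrow R$ of graded $U_{K,\zeta}(\Gg)$-equivariant algebras. It sends the standard basis $\{v_{r_1} v_{r_2} \cdots v_{r_p} : 1 \leq r_1 \leq \cdots \leq r_p \leq n\}$ of $S_\zeta V$ to the ordered monomials $\xi_{1 r_1} \cdots \xi_{1 r_p}$, which by the paragraph preceding the lemma (combining Lemma \ref{lem:OM2} with \eqref{eq:x-tx}) already form a $K$-basis of $R$. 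Hence it is a graded algebra isomorphism as well as a $U_{K,\zeta}(\Gg)$-module isomorphism, which restricts along $U_{K,\zeta}(\Gp_J) \hookrightarrow U_{K,\zeta}(\Gg)$ to give the first assertion.

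For the right action, the key degree-one identity is $v_1^* u = \chi^J_{\epsilon_1}(u)\, v_1^*$ for every $u \in U_{K,\zeta}(\Gp_J)$. On the algebra generators $h \in U_{K,\zeta}(\Gh)$, $f_i^{(m)}$ ($i \in I$, $m \geq 1$), and $e_j^{(m)}$ ($j \in J$, $m \geq 1$), this is a direct computation from the explicit action of $U_{K,\zeta}(\Gg)$ on $V^*$; the crucial point is that the exclusion $j \neq 1$ built into $J$ ensures $v_1^* e_j^{(m)} = 0$ for the relevant $j$, while $v_1^* f_i = 0$ holds for all $i \in I$. Since $\chi^J_{\epsilon_1}$ is a character, the set of $u$ satisfying the identity is a subalgebra, so the identity extends to all of $U_{K,\zeta}(\Gp_J)$, and applying $\Phi_V(\,\cdot\,\otimes v_r)$ gives $\xi_{1r} u = \chi^J_{\epsilon_1}(u)\, \xi_{1r}$. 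For a monomial $z = \xi_{1 r_1} \cdots \xi_{1 r_p} \in R^p$, iterating the right-action half of \eqref{eq:F1} yields $zu = \sum_{(u)} \chi^J_{\epsilon_1}(u_{(0)}) \cdots \chi^J_{\epsilon_1}(u_{(p-1)})\, z$, and since the convolution $p$-th power of the character $\chi^J_{\epsilon_1}$ is $\chi^J_{p\epsilon_1}$, this equals $\chi^J_{p\epsilon_1}(u)\, z$. There is no substantive obstacle; the only point to watch is keeping the left/right bimodule conventions consistent when transcribing the degree-one matrix-coefficient identity.
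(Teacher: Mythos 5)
Your proof is correct. The paper itself gives no argument here — it simply states \lq\lq We see easily the following\rq\rq\ and moves on — so there is no paper proof to compare against, but your write-up fills in exactly the details the author left to the reader, and the route you take is surely the intended one. The three ingredients you isolate are the right ones: the identity $u\cdot\Phi_V(v^*\otimes v)=\Phi_V(v^*\otimes uv)$ (and its right-action mirror $\Phi_V(v^*\otimes v)u=\Phi_V(v^*u\otimes v)$) coming from \eqref{eq:O-bim}, which makes $v_r\mapsto\xi_{1r}$ a left $U_{K,\zeta}(\Gg)$-module isomorphism onto $R^1$ and reduces the right-action statement in degree one to the lowest-weight equation $v_1^*u=\chi^J_{\epsilon_1}(u)v_1^*$; the equivariance of the multiplication recorded in \eqref{eq:F1}, which lifts the degree-one statements to all of $R$; and the basis statement extracted from Lemma \ref{lem:OM2} and \eqref{eq:x-tx}, which turns the surjection $S_\zeta V\twoheadrightarrow R$ into an isomorphism. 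Your observation that $v_1^*e_j^{(m)}=0$ precisely because $1\notin J$, and $v_1^*f_i^{(m)}=0$ for all $i$, is the key point that makes $R$ right $U_{K,\zeta}(\Gp_J)$-stable (but not right $U_{K,\zeta}(\Gg)$-stable, since $v_1^*e_1=v_2^*$), and the closing remark that the convolution $p$-th power of $\chi^J_{\epsilon_1}$ is $\chi^J_{p\epsilon_1}$ is the correct justification for the formula $zu=\chi^J_{p\epsilon_1}(u)z$.
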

We write $R_\ad$ (resp.\ $\overline{R}_\ad$) 
fo $R$ (resp.\ $\overline{R}$)
when it
is regarded as  
a $U_{K,\zeta}(\Gp_J)$-module via the adjoint action.
By Lemma \ref{lem:R} the action of $U_{K,\zeta}(\Gp_J)$ on $\overline{R}_\ad$ is trivial.
\begin{lemma}
\label{lem:R-free}
$\hat{O}_{K,\zeta}(G)$ is a flat right $R$-module.
\end{lemma}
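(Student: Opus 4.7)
The plan is to show that $\hat{O}_{K,\zeta}(G)$ is \emph{free} as a right module over the Ore localization $R[\xi_{11}^{-1}]$, and then deduce flatness over $R$ from the flatness of $R\to R[\xi_{11}^{-1}]$.

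First, since $\langle\epsilon_1,\alpha_i^\vee\rangle=\delta_{i,1}\geqq0$ for all $i$, we have $\epsilon_1\in\Lambda^+$, so $\xi_{11}=\chi_{\epsilon_1}\in\CS$ is invertible in $\hat{O}_{K,\zeta}(G)$ by \eqref{eq:x-tx}. The relation $\xi_{11}\xi_{1r}=\zeta\xi_{1r}\xi_{11}$ from \eqref{eq:relation1} shows that $\xi_{11}$ quasi-commutes with every generator of $R$, so $\{\xi_{11}^k\mid k\geqq0\}$ satisfies both the left and right Ore conditions in $R$; hence $R[\xi_{11}^{-1}]$ is a flat right $R$-module, and it remains to exhibit $\hat{O}_{K,\zeta}(G)$ as free over $R[\xi_{11}^{-1}]$.

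Combining \eqref{eq:C-tri} with Lemma \ref{lem:OM2}, the multiplication induces a $K$-linear isomorphism $U_{K,\zeta}(\Gn^+)^\bigstar\cong U_{K,\zeta}(\Gn^+_J)^\bigstar\otimes R^+$, where $R^+$ is the $K$-span of the ordered monomials $\txi_{12}^{a_{12}}\cdots\txi_{1n}^{a_{1n}}$ and $U_{K,\zeta}(\Gn^+_J)^\bigstar$ accounts for the remaining ordered monomials in $\txi_{rs}$ with $2\leqq r<s\leqq n$. Fixing any group decomposition $\Lambda=\Lambda'\oplus\BZ\epsilon_1$ splits $O_{K,\zeta}(H)\cong K[\Lambda']\otimes K[\chi_{\epsilon_1}^{\pm1}]$. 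Substituting these into \eqref{eq:C-tri} produces a $K$-linear isomorphism
\[
\tilde{U}_{K,\zeta}(\Gn)^\bigstar\otimes K[\Lambda']\otimes K[\chi_{\epsilon_1}^{\pm1}]\otimes U_{K,\zeta}(\Gn^+_J)^\bigstar\otimes R^+\xrightarrow{\sim}\hat{O}_{K,\zeta}(G)
\]
sending a simple tensor to its iterated product.

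The core step is to swap the middle factors $K[\chi_{\epsilon_1}^{\pm1}]$ and $U_{K,\zeta}(\Gn^+_J)^\bigstar$. By \eqref{eq:com1}, for $\varphi_J$ of weight $\gamma_J\in Q_J^+$ one has $\varphi_J\chi_{\epsilon_1}=\zeta^{-\langle\epsilon_1,\jmath(\gamma_J)\rangle}\chi_{\epsilon_1}\varphi_J$; since $\langle\epsilon_1,\alpha_j^\vee\rangle=0$ for every $j\in J=\{2,\dots,n-1\}$, the twist factor is trivial, so $\chi_{\epsilon_1}$ commutes on the nose with every element of $U_{K,\zeta}(\Gn^+_J)^\bigstar$. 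Therefore the multiplication map remains bijective after the swap, and the last two factors $K[\chi_{\epsilon_1}^{\pm1}]\otimes R^+$ assemble into $R[\xi_{11}^{-1}]$: surjectivity is immediate from $\xi_{1r}=\zeta^{-1}\chi_{\epsilon_1}\txi_{1r}$, and injectivity follows from the linear independence of the monomials $\chi_{\epsilon_1}^a\txi_{12}^{b_2}\cdots\txi_{1n}^{b_n}$ in $\hat{O}_{K,\zeta}(G)$ provided by \eqref{eq:C-tri}. This exhibits $\hat{O}_{K,\zeta}(G)$ as free over $R[\xi_{11}^{-1}]$ with $K$-basis drawn from $\tilde{U}_{K,\zeta}(\Gn)^\bigstar\otimes K[\Lambda']\otimes U_{K,\zeta}(\Gn^+_J)^\bigstar$, whence flatness over $R$ follows by transitivity. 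The only nontrivial ingredient is the commutation of $\chi_{\epsilon_1}$ with the ``$J$-piece'', which is exactly what the maximality of the parabolic $P_J=P_{I\setminus\{1\}}$ makes available.
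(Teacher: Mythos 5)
Your proof is correct and takes essentially the same route as the paper's: your Ore localization $R[\xi_{11}^{-1}]$ is precisely the paper's algebra $\hat{R}$ (the subalgebra of $\hat{O}_{K,\zeta}(G)$ generated by $R$ and $\chi_{\epsilon_1}^{-1}$), and the combination of the triangular decomposition \eqref{eq:C-tri}, Lemma \ref{lem:OM2}, and the splitting $\Lambda=\Lambda'\oplus\BZ\epsilon_1$ is exactly how the paper exhibits $\hat{O}_{K,\zeta}(G)$ as a free right $\hat{R}$-module. One minor remark: the on-the-nose commutation of $\chi_{\epsilon_1}$ with $U_{K,\zeta}(\Gn^+_J)^\bigstar$ is a pleasant observation, but it is more than the argument actually requires --- by \eqref{eq:com1}, reordering weight-graded tensor factors only introduces nonzero scalar twists of the form $\zeta^{\pm\langle\lambda,\jmath(\gamma)\rangle}$, so bijectivity of the multiplication map after the swap holds in any case, independently of $J$ being a maximal parabolic.
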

\begin{proof}
Denote by $\hat{R}$ the $K$-subalgebra of $\hat{O}_{K,\zeta}(G)$ generated by $R$ and 
$\chi_{\epsilon_1}^{-1}$.
By \eqref{eq:com1}, \eqref{eq:C-tri} and  Lemma \ref{lem:OM2} 
$R$ 
(resp.\ $\hat{R}$)
has a free $K$-basis
consisting of 
$\chi_{\epsilon_1}^m
\txi_{12}^{a_{2}}\dots
\txi_{1n}^{a_{n}}$
for 
$m\in\BZ_{\geqq0}$
(resp.\ $\BZ$), 
$a_{2},\dots, a_{n}\in\BZ_{\geqq0}$.
In particular, $\hat{R}$ is a localization of $R$ 
and hence flat over $R$.
Therefore it is sufficient to show that 
$\hat{O}_{K,\zeta}(G)$ is a free $\hat{R}$-module.
By \eqref{eq:com1}, \eqref{eq:C-tri} and  Lemma \ref{lem:OM2} 
we have only to verify that 
$K[\Lambda]$ is a free $K[e({\epsilon_1})^{\pm1}]$-module (see \ref{subsec:QC1} for the notation).
By $\langle\epsilon_1,\alpha_i^\vee\rangle=1$ we see that $\Lambda/\BZ \epsilon_1$ is torsion free, and hence there exits a $\BZ$-submodule $\Lambda'$ of $\Lambda$ such that $\Lambda=\Lambda'\oplus\BZ \epsilon_1$.
Then we have
$K[\Lambda]\cong K[\Lambda']\otimes K[e({\epsilon_1})^{\pm1}]$.
Hence
$K[\Lambda]$ is free over $K[e({\epsilon_1})^{\pm1}]$.
\end{proof}

\subsection{}
We construct a resolution of $O_{K,\zeta}(P_J)_\ad
\in \Mod^{O_{K,\zeta}(G)}_\inte(U_{K,\zeta}(\Gp_J))$ using the Koszul complex.

Set $V_J=\sum_{r\geqq2}K\xi_{1r}\subset O_{K,\zeta}(G)$ and 
denote by $R_J$ the subalgebra of $O_{K,\zeta}(G)$ generated by $V_J$.
Then $V_J$ and $R_J$ are $U_{K,\zeta}(\Gp_J)$-subbimodules of $O_{K,\zeta}(G)$.
Moreover, $R_J$ is naturally a $U_{K,\zeta}(\Gp_J)$-equivariant $K$-algebra.
The right action of $U_{K,\zeta}(\Gp_J)$ on 
$R_J$ is given by 
\[
\xi_{1r}u=
\chi^J_{\epsilon_1}(u)\xi_{1r}
\qquad(u\in U_{K,\zeta}(\Gp_J), r\geqq2).
\]
Note that as a left $U_{K,\zeta}(\Gl_J)$-module $V_J$ is a highest weight module with highest weight $\epsilon_2$.
Hence applying Proposition \ref{prop:Koszul} to the 
left $U_{K,\zeta}(\Gl_J)$-module $V_J$ we obtain a Koszul resolution 
$S_\zeta V_J\otimes\wedge_\zeta^\bullet V_J$ of 
 $K$ as an $S_\zeta V_J$-module as well as a $U_{K,\zeta}(\Gl_J)$-module.
Since $S_\zeta V_J$ is naturally isomorphic to $R_J$ as a $K$-algebra as well as a left $U_{K,\zeta}(\Gl_J)$-module, 
we replace $S_\zeta V_J$ with $R_J$ in the following and write $R_J{\otimes}\wedge_\zeta^{\bullet}V_J$ for $S_\zeta V_J{\otimes}\wedge_\zeta^{\bullet}V_J$.
Recall that 
$\wedge_\zeta V_J$ is the quotient of 
$TV_J$ by the two-sided ideal generated by
$\xi_{1r}\otimes\xi_{1r}$ for $r\geqq2$ and 
$\xi_{1r}\xi_{1s}+\zeta^{-1}\xi_{1s}\xi_{1r}$ for 
$2\leqq r<s$.
We can easily show that the $U_{K,\zeta}(\Gp_J)$-bimodule structure of $TV_J$ 
given by
\begin{align*}
u(z_1\otimes\dots \otimes z_n)
=&
\sum_{(u)}u_{(0)}z_1\otimes\dots \otimes u_{(n-1)}z_n,
\\
(z_1\otimes\dots \otimes z_n)u
=&
\sum_{(u)}z_1u_{(0)}\otimes\dots \otimes z_nu_{(n-1)}
\end{align*}
for $u\in U_{K,\zeta}(\Gp_J)$, $z_1,\dots, z_n\in V_J$ 
induces a $U_{K,\zeta}(\Gp_J)$-bimodule structure of 
$\wedge_\zeta V_J$.
We can also easily check that 
$\deru_p:R_J{\otimes}
\wedge_\zeta^{p+1}V_J
\to
R_J
{\otimes}
\wedge_\zeta^{p}V_J$ for $p=0,\dots, n-1$ and 
$\epsilon:R_J\to K$ are homomorphisms of 
$U_{K,\zeta}(\Gp_J)$-bimodules, where the 
$U_{K,\zeta}(\Gp_J)$-bimodule structures of 
$R_J
{\otimes}
\wedge_\zeta^{p}V_J$
and $K$ are given by 
\[
u(d\otimes z)=\sum_{(u)}u_{(0)}d\otimes u_{(1)}z,
\quad
(d\otimes z)u=\sum_{(u)}du_{(0)}\otimes zu_{(1)}
\]
for 
$u\in U_{K,\zeta}(\Gp_J)$, $d\in R_J$, 
$z\in \wedge_\zeta^{p}V_J$, 
and 
$u1=1u=\varepsilon(u)1$ for $u\in	U_{K,\zeta}(\Gp_J)$
respectively.
Hence the Koszul complex 
$R_J{\otimes}\wedge_\zeta^\bullet V_J$
gives a resolution of $K$ as an $R_J$-module as well as a $U_{K,\zeta}(\Gp_J)$-bimodule.
If we regard $R_J{\otimes}\wedge_\zeta^p V_J$ and $K$ as $U_{K,\zeta}(\Gp_J)$-modules with respect to the adjoint action
\[
\ad(u)(z)=\sum_{(u)}u_{(0)}z(S^{-1}u_{(1)})
\qquad
(u\in U_{K,\zeta}(\Gp_J),\;
z\in \wedge_\zeta^p V_J\; \text{ or } \;z\in K),
\]
then 
$R_J{\otimes}\wedge_\zeta^\bullet V_J$
gives a resolution of $K$ as an $R_J$-module as well as a $U_{K,\zeta}(\Gp_J)$-module with respect to the adjoint action.
Since the adjoint action of $U_{K,\zeta}(\Gp_J)$ on 
$R_J{\otimes}\wedge_\zeta^p V_J$ is given by 
\begin{align*}
\ad(u)(d\otimes z)=&
\sum_{(u)}
u_{(0)}(d\otimes z)(S^{-1}u_{(1)})
=
\sum_{(u)}
u_{(0)}d(S^{-1}u_{(3)})\otimes u_{(1)}z(S^{-1}u_{(2)})
\\
=&
\sum_{(u)}
u_{(0)}d(S^{-1}u_{(2)})\otimes 
\ad(u_{(1)})(z)
\end{align*}
for 
$u\in U_{K,\zeta}(\Gp_J)$, $d\in R_J$, 
$z\in \wedge_\zeta^{p}V_J$, 
we write 
$R_J\Breve{\otimes}(\wedge_\zeta^p V_J)_\ad$
for
$R_J{\otimes}\wedge_\zeta^p V_J$
when we regard it as a 
$U_{K,\zeta}(\Gp_J)$-module via the adjoint action
(see Section \ref{subsec:tildetensor}).
We have obtained a resolution $R_J\Breve{\otimes}(\wedge_\zeta^\bullet V_J)_\ad$ of $K$ in 
$\Mod^{R_J}_\inte(U_{K,\zeta}(\Gp_J))$.

Note that $R_J$ is a $U_{K,\zeta}(\Gp_J)$-equivariant $K$-subalgebra of $R$.
Since $R$ is a free $R_J$-module, 
we obtain a resolution 
$R\otimes_{R_J}(R_J\otimes\wedge_\zeta^\bullet V_J)
\cong R\otimes\wedge_\zeta^\bullet V_J$
of 
$R\otimes_{R_J}K\cong \overline{R}$.
Considering the adjoint action we have obtained 
the following result (see \eqref{eq:equiv-ext}).
\begin{proposition}
\label{prop:resolution0}
We have a resolution 
$R\Breve{\otimes}(\wedge_\zeta^\bullet V_J)_\ad$
of $\overline{R}_\ad$ in $\Mod^R_\inte(U_{K,\zeta}(\Gp_J))$.
\end{proposition}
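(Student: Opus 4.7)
The plan is to build the sought resolution in three steps: invoke the Koszul exactness for $V_J$, promote the left-module resolution to a bimodule one so that passing to the adjoint action makes sense, and finally base-change along the free ring extension $R_J \hookrightarrow R$.

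First I would apply Proposition \ref{prop:Koszul} to $V_J$, using the $K$-algebra isomorphism $S_\zeta V_J \cong R_J$ sending $v_r \mapsto \xi_{1r}$ ($r\geq 2$). This isomorphism is compatible with the left $U_{K,\zeta}(\Gl_J)$-action since $V_J$ is a highest weight $U_{K,\zeta}(\Gl_J)$-module of highest weight $\epsilon_2$, so one obtains the exactness of $R_J \otimes \wedge_\zeta^\bullet V_J \to K \to 0$ as left $U_{K,\zeta}(\Gl_J)$-modules. To upgrade this to a sequence of $U_{K,\zeta}(\Gp_J)$-bimodules I would note that the iterated coproduct endows $TV_J$ with a bimodule structure, and that the defining relations of $\wedge_\zeta V_J$ are preserved by the right action (a short check using that every $\xi_{1r}$ has right character $\chi^J_{\epsilon_1}$), so each $\wedge_\zeta^p V_J$ inherits a bimodule structure. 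A routine Sweedler-notation calculation then verifies that the $\deru_p$ and $\epsilon$ are bimodule homomorphisms.

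Next I would pass to the adjoint picture. The identity
\[
\ad(u)(d \otimes z) = \sum_{(u)} u_{(0)}d(S^{-1}u_{(2)}) \otimes \ad(u_{(1)})(z),
\]
computed in the paragraph preceding the statement, identifies the adjoint action on $R_J \otimes \wedge_\zeta^p V_J$ induced by the bimodule structure with the action defining $R_J \Breve{\otimes}(\wedge_\zeta^p V_J)_\ad$; since the differentials are bimodule maps they commute with this twist, so exactness is preserved and one obtains a resolution in $\Mod^{R_J}_\inte(U_{K,\zeta}(\Gp_J))$. The final step is base change along $R_J \hookrightarrow R$: by Lemma \ref{lem:R} and the commutation $\xi_{11}\xi_{1r} = \zeta \xi_{1r}\xi_{11}$ for $r\geq 2$, $R$ is free as a right $R_J$-module with basis $\{\xi_{11}^a\}_{a\geq 0}$, so the base-changed complex $R \otimes \wedge_\zeta^\bullet V_J$ is still exact, with augmentation $R \otimes_{R_J} K \cong R/\sum_{r\geq 2}R\xi_{1r} = \overline{R}$. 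Carrying the adjoint action through the tensor product (legal because $R$ contains $R_J$ as a $U_{K,\zeta}(\Gp_J)$-equivariant subalgebra) delivers the resolution $R \Breve{\otimes}(\wedge_\zeta^\bullet V_J)_\ad \to \overline{R}_\ad$ in $\Mod^R_\inte(U_{K,\zeta}(\Gp_J))$.

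The main obstacle I expect is the bookkeeping confirming that each Koszul differential is genuinely a bimodule map — in particular, that the signs and $(-\zeta)^{-(a-1)}$ coefficients in $\deru_p$ interact correctly with the iterated coproduct acting on ordered wedges $v_{r_1}\wedge\cdots\wedge v_{r_{p+1}}$. Beyond this verification the argument is formal: one part is an immediate application of the general Koszul result Proposition \ref{prop:Koszul}, the other a flat base change along a free ring extension.
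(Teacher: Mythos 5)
Your proposal follows the paper's argument essentially step for step: apply Proposition \ref{prop:Koszul} to $V_J$ via the identification $S_\zeta V_J \cong R_J$, promote the Koszul complex to a complex of $U_{K,\zeta}(\Gp_J)$-bimodules by checking that the relations of $\wedge_\zeta V_J$ and the differentials $\deru_p$, $\epsilon$ respect the right action, pass to the adjoint twist using the displayed Sweedler computation, and then base-change along the free extension $R_J\hookrightarrow R$ to land on $\overline{R}_\ad$. This is exactly the route the paper takes; the only cosmetic difference is that you make the free right $R_J$-basis $\{\xi_{11}^a\}$ of $R$ explicit, which the paper leaves implicit.
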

Note that 
$O_{K,\zeta}(P_J)=O_{K,\zeta}(G)\otimes_R\overline{R}$ by Proposition \ref{prop:OPJ}.
By applying $O_{K,\zeta}(G)\otimes_R(\bullet)$ 
to $R\otimes\wedge_\zeta^\bullet V_J$
we obtain a complex 
\[
O_{K,\zeta}(G)\otimes_R(R\otimes\wedge_\zeta^\bullet V_J)
=
O_{K,\zeta}(G)\otimes\wedge_\zeta^\bullet V_J.
\]
\begin{proposition}
\label{prop:resolution}
We have a resolution 
$O_{K,\zeta}(G)\Breve{\otimes}
(\wedge_\zeta^\bullet V_J)_\ad$ 
of $O_{K,\zeta}(P_J)_\ad$
in the abelian category $\Mod^{O_{K,\zeta}(G)}_\inte(U_{K,\zeta}(\Gp_J))$.
\end{proposition}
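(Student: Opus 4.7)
The strategy is to deduce the asserted resolution from Proposition \ref{prop:resolution0} by base change along the inclusion $R\hookrightarrow O_{K,\zeta}(G)$ of $U_{K,\zeta}(\Gp_J)$-equivariant algebras, and to verify exactness by localizing at $\CS$ and descending via the faithfulness statement of Proposition \ref{prop:lz}.

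First I apply the equivariant base change functor $O_{K,\zeta}(G)\Breve{\otimes}_R(\bullet)$ of \eqref{eq:equiv-ext} to the resolution $R\Breve{\otimes}(\wedge_\zeta^\bullet V_J)_\ad\to\overline{R}_\ad$ of Proposition \ref{prop:resolution0}. The resulting complex has terms $O_{K,\zeta}(G)\Breve{\otimes}(\wedge_\zeta^\bullet V_J)_\ad$ and is augmented over $O_{K,\zeta}(G)\Breve{\otimes}_R\overline{R}_\ad$, which is canonically identified with $O_{K,\zeta}(P_J)_\ad$ by Proposition \ref{prop:OPJ}. The differentials and augmentation inherit their $O_{K,\zeta}(G)$-linearity and $U_{K,\zeta}(\Gp_J)$-equivariance from Proposition \ref{prop:resolution0}, so this is a complex in $\Mod^{O_{K,\zeta}(G)}_\inte(U_{K,\zeta}(\Gp_J))$.

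The only remaining point is exactness. I apply the exact functor $\CS^{-1}$ of \eqref{eq:funhat} termwise. Since $\hat{O}_{K,\zeta}(G)$ is flat as a right $R$-module by Lemma \ref{lem:R-free}, the localized complex
\[
\hat{O}_{K,\zeta}(G)\otimes\wedge_\zeta^\bullet V_J
\;=\;
\hat{O}_{K,\zeta}(G)\otimes_R\bigl(R\otimes\wedge_\zeta^\bullet V_J\bigr)
\;\longrightarrow\;
\hat{O}_{K,\zeta}(G)\otimes_R\overline{R}
\;=\;
\hat{O}_{K,\zeta}(P_J)
\]
is obtained from the resolution of $\overline{R}$ by applying the exact functor $\hat{O}_{K,\zeta}(G)\otimes_R(\bullet)$, hence is itself exact (the last identification is the computation carried out in the proof of Proposition \ref{prop:OPJ}). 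Consequently each homology group $H_p$ of the original complex satisfies $\CS^{-1}H_p=\{0\}$. Since every $H_p$ lies in $\Mod^{O_{K,\zeta}(G)}_\inte(U_{K,\zeta}(\Gp_J))\subset\Mod^{O_{K,\zeta}(G)}_\inte(U_{K,\zeta}(\Gb))$ as a subquotient of the terms, Proposition \ref{prop:lz} forces $H_p=\{0\}$, giving the desired exactness.

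The main technical obstacle is that $O_{K,\zeta}(G)$ is not known to be flat over $R$, so naive base change along $R\hookrightarrow O_{K,\zeta}(G)$ need not preserve exactness directly; the workaround is that flatness does hold after inverting $\CS$ (Lemma \ref{lem:R-free}), while Proposition \ref{prop:lz} ensures that $\CS^{-1}$ reflects isomorphisms (equivalently, kernels and cokernels) in the category $\Mod^{O_{K,\zeta}(G)}_\inte(U_{K,\zeta}(\Gb))$ where our homology groups live, so exactness can be checked after localization.
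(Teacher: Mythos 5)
Your proof is correct and follows essentially the same route as the paper: base change the equivariant Koszul resolution along $R\hookrightarrow O_{K,\zeta}(G)$, identify the augmentation with $O_{K,\zeta}(P_J)_\ad$ via Proposition \ref{prop:OPJ}, pass to the localization $\CS^{-1}$ where Lemma \ref{lem:R-free} supplies the needed flatness, and descend the exactness. The only cosmetic difference is that the paper cites Corollary \ref{cor:FF} to reduce to the localized statement, whereas you invoke the underlying Proposition \ref{prop:lz} directly (together with exactness of $\CS^{-1}$); since Corollary \ref{cor:FF} is itself deduced from Proposition \ref{prop:lz}, these are the same argument.
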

\begin{proof}
By  Corollary \ref{cor:FF} it is sufficient to show that 
$\hat{O}_{K,\zeta}(G)\otimes\wedge_\zeta^\bullet V_J
$ gives a resolution of $\hat{O}_{K,\zeta}(P_J)$.
Since 
$\hat{O}_{K,\zeta}(G)\otimes\wedge_\zeta^\bullet V_J
$ is obtained by 
by applying $\hat{O}_{K,\zeta}(G)\otimes_R(\bullet)$  to $R\otimes\wedge_\zeta^\bullet V_J$, 
this follows from Lemma \ref{lem:R-free}.
\end{proof}
\subsection{}
Let us compute $R^i\Ind^{\Gg,\Gb}(O_{K,\zeta}(P_J)_\ad)$ using the resolution 
$O_{K,\zeta}(G)\Breve{\otimes}
(\wedge_\zeta^\bullet V_J)_\ad$ of
$O_{K,\zeta}(P_J)_\ad$.
In the rest of this paper we simply write $\Ind$ for $\Ind^{\Gg,\Gb}$.
For $a=0,\dots, n-1$ let
\[
M_a
=
[
O_{K,\zeta}(G)\Breve{\otimes}
(\wedge_\zeta^a V_J)_\ad
\to\cdots\to
O_{K,\zeta}(G)\Breve{\otimes}
(\wedge_\zeta^1 V_J)_\ad
\to
O_{K,\zeta}(G)\Breve{\otimes}
(\wedge_\zeta^0 V_J)_\ad
]
\]
be the truncation of the complex
$O_{K,\zeta}(G)\Breve{\otimes}
(\wedge_\zeta^\bullet V_J)_\ad$.
We have a sequence of morphisms
\[
0=M_{-1}\to
M_0\to M_1\to\cdots \to M_{n-1}=O_{K,\zeta}(G)\Breve{\otimes}
(\wedge_\zeta^\bullet V_J)_\ad
\]
of complexes, and distinguished triangles
\[
M_{a-1}\to M_a\to 
O_{K,\zeta}(G)\Breve{\otimes}
(\wedge_\zeta^a V_J)_\ad
[a]
\xrightarrow{+1}
\]
in the derived category
for $a=0,\dots, n-1$.
Applying $R\Ind$ to it we obtain a distinguished triangle 
\begin{equation}
\label{eq:triangle}
R\Ind(M_{a-1})\to R\Ind(M_a)\to 
R\Ind(O_{K,\zeta}(G)\Breve{\otimes}
(\wedge_\zeta^a V_J))[a]
\xrightarrow{+1}.
\end{equation}
\begin{lemma}
\label{lem:step}
We have
\[
R^p\Ind(O_{K,\zeta}(G)\Breve{\otimes}(\wedge_\zeta^a V_J)_\ad)
\cong
\begin{cases}
O_{K,\zeta}(G)\qquad&(p=a)
\\
0&(p\ne a).
\end{cases}
\]
\end{lemma}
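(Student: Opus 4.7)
The plan is to apply the tensor identity (Lemma \ref{lem:TTI}) to reduce to computing $R^p\Ind((\wedge^a_\zeta V_J)_\ad)$, and then to determine this using a filtration of $(\wedge^a_\zeta V_J)_\ad$ as a $U_{K,\zeta}(\Gb)$-module together with the Borel--Weil--Bott theorem (Proposition \ref{prop:BBW}). Applying Lemma \ref{lem:TTI} with $J_2=I$, $J_1=\emptyset$, $V=O_{K,\zeta}(G)$, and $M=(\wedge^a_\zeta V_J)_\ad$ yields
\[
R^p\Ind\bigl(O_{K,\zeta}(G)\Breve{\otimes}(\wedge^a_\zeta V_J)_\ad\bigr)
\cong O_{K,\zeta}(G)\Breve{\otimes}R^p\Ind\bigl((\wedge^a_\zeta V_J)_\ad\bigr),
\]
and since $O_{K,\zeta}(G)\Breve{\otimes}K\cong O_{K,\zeta}(G)$ for the trivial $U_{K,\zeta}(\Gg)$-module $K$, it suffices to show $R^p\Ind((\wedge^a_\zeta V_J)_\ad)\cong\delta_{p,a}K$ as $U_{K,\zeta}(\Gg)$-modules.

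Since the right $U_{K,\zeta}(\Gp_J)$-action on $V_J$ is by the character $\chi^J_{\epsilon_1}$, the adjoint action restricts on $U_{K,\zeta}(\Gb)$ to give an isomorphism $(\wedge^a_\zeta V_J)_\ad\cong\wedge^a_\zeta V_J\otimes K_{-a\epsilon_1}$, where $\wedge^a_\zeta V_J$ carries the standard left action. The relations $f_r\xi_{1s}=\delta_{r,s}\xi_{1,s+1}$ (with $\xi_{1,n+1}=0$) show that $V_J^{(s)}:=\mathrm{span}(\xi_{1,s},\ldots,\xi_{1,n})$ is a $U_{K,\zeta}(\Gb)$-submodule of $V_J$ with $V_J^{(s)}/V_J^{(s+1)}\cong K_{\epsilon_s}$. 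Lexicographically ordering the basis monomials $\xi_{1,t_1}\wedge\cdots\wedge\xi_{1,t_a}$ (with $2\leq t_1<\cdots<t_a\leq n$) then produces a $U_{K,\zeta}(\Gb)$-filtration of $\wedge^a_\zeta V_J$ whose $1$-dimensional graded pieces are $K_{\sum_{s\in S}\epsilon_s}$, and after the $-a\epsilon_1$ twist the graded pieces of $(\wedge^a_\zeta V_J)_\ad$ are $K_{\lambda_S}$ with $\lambda_S=-\sum_{s\in S}\alpha_{1s}$, indexed by the $a$-subsets $S\subset\{2,\ldots,n\}$.

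For each such $S$, a direct computation in the type $A$ root system gives $|\langle\lambda_S+\rho,\alpha^\vee\rangle|\leq n-1<n\leq\ell$ for all positive roots $\alpha$, so the smallness hypothesis of Proposition \ref{prop:BBW} holds. The vanishing condition (i) of that proposition ($\langle\lambda_S+\rho,\alpha^\vee\rangle=0$ for some $\alpha$) is satisfied for every $S$ except $S_*:=\{2,\ldots,a+1\}$: whenever some $p\geq 2$ satisfies $p\notin S$ and $p+1\in S$ one has $\langle\lambda_S+\rho,\alpha_p^\vee\rangle=0$, and whenever $S$ is contiguous in $\{2,\ldots,n\}$ starting at $c\geq 3$ one has $\langle\lambda_S+\rho,\alpha_{1,a+1}^\vee\rangle=0$. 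Hence $R^*\Ind(K_{\lambda_S})=0$ for $S\ne S_*$. For $S=S_*$, the element $w:=s_as_{a-1}\cdots s_1\in W$ has length $a$ and satisfies $w(\lambda_{S_*}+\rho)=\rho$, so $w\circ\lambda_{S_*}=0$, and Proposition \ref{prop:BBW}(ii) gives $R^p\Ind(K_{\lambda_{S_*}})\cong\delta_{p,a}V^*_{K,\zeta}(0)=\delta_{p,a}K$.

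Chaining the long exact sequences associated to the successive subquotients of the filtration---or equivalently noting that the spectral sequence of the filtration collapses at $E_1$ since only the $S_*$-piece has nonvanishing $R^*\Ind$---we obtain $R^p\Ind((\wedge^a_\zeta V_J)_\ad)\cong\delta_{p,a}K$, completing the proof. The hard part is the Borel--Weil--Bott case analysis: uniform verification of the smallness hypothesis (which is where the assumption $\ell\geq n$ enters), the combinatorial identification of $S_*$ as the unique $a$-subset whose associated weight avoids the singular hyperplanes, and the explicit identification of the Weyl shift $w\circ\lambda_{S_*}=0$ that produces the correct cohomological degree.
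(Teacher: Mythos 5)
Your proof is correct and follows essentially the same strategy as the paper's: apply the twisted tensor identity (Lemma~\ref{lem:TTI}, which the paper cites as \ref{lem:TI}, a small slip) to reduce to $R^\bullet\Ind((\wedge_\zeta^a V_J)_\ad)$, then identify the $U_{K,\zeta}(\Gb)$-filtration by the one-dimensional modules $K_{\lambda_S}$ with $\lambda_S=-\sum_{s\in S}\alpha_{1s}$, and finish with the Borel--Weil--Bott theorem. The one place where you diverge from the paper's route is in the combinatorial step: the paper's Lemma~\ref{lem:step2} invokes Kostant's classification (Lemma~\ref{lem:Kos}) to reduce to identifying which sets $\{\alpha_{1j_1},\dots,\alpha_{1j_a}\}$ are inversion sets $\Delta^+\cap(-w^{-1}\Delta^+)$, while you compute the pairings $\langle\lambda_S+\rho,\alpha^\vee\rangle$ directly. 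Both are fine; yours is a bit more elementary, the paper's a bit more structural.

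Two small imprecisions worth flagging. First, your auxiliary claim that for contiguous $S$ starting at $c\geq 3$ one has $\langle\lambda_S+\rho,\alpha_{1,a+1}^\vee\rangle=0$ is not correct as stated: one computes $\langle\lambda_S+\rho,\alpha_{1,a+1}^\vee\rangle=-\mathbf{1}_{a+1\in S}$, so this vanishes only when $c\geq a+2$. However, this does no damage, since your first condition ($p\geq 2$, $p\notin S$, $p+1\in S$) already detects singularity for every $a$-subset $S\ne S_*$: if every such $p$ fails, then $S$ is closed under predecessors in $\{2,\dots,n\}$, hence $S=\{2,\dots,a+1\}=S_*$. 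So the second condition is redundant, and its flaw is harmless. Second, you apply only Proposition~\ref{prop:BBW} (root of unity case); in the case where $\zeta$ is transcendental you should instead cite Proposition~\ref{prop:BBW0}, as the paper does. The argument is otherwise unchanged.
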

\begin{proof}
By Lemma \ref{lem:TI}  it is sufficient to show
\[
R^p\Ind((\wedge_\zeta^a V_J)_\ad)
\cong
\begin{cases}
K\qquad&(p=a)
\\
0&(p\ne a).
\end{cases}
\]
Note that $(\wedge_\zeta^a V_J)_\ad$ is a sum of one-dimensional weight spaces with weight
\[
-(\alpha_{1j_1}+\cdots+\alpha_{1j_a})
\qquad(2\leqq j_1<j_2<\dots<j_a\leqq n).
\]
Hence the desired result is a consequence of Lemma \ref{lem:step2} below.
\end{proof}
\begin{lemma}
\label{lem:step2}
We have
\[
R^p\Ind(K_{-(\alpha_{1j_1}+\dots+\alpha_{1j_a})})
\cong
\begin{cases}
K\qquad&(p=a, j_1=2,\dots, j_a=a+1)
\\
0&(\text{otherwise})
\end{cases}
\]
for $2\leqq j_1<j_2<\dots<j_a\leqq n$.\end{lemma}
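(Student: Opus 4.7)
The plan is to apply the Borel--Weil--Bott type theorems (Proposition \ref{prop:BBW0} for the transcendental case and Proposition \ref{prop:BBW} for the root of unity case) together with Kostant's description of $W\circ$-orbits (Lemma \ref{lem:Kos}) to the weight
\[
\lambda_X=-(\alpha_{1j_1}+\dots+\alpha_{1j_a}),
\qquad X=\{\alpha_{1j_1},\dots,\alpha_{1j_a}\}\subset\Delta^+.
\]
The conclusion is then a matter of packaging: the vanishing/non-vanishing pattern is dictated entirely by whether $X$ arises as the inversion set of some $w\in W$, and the cohomological degree is the length of that $w$.

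First, I verify the weight bound required by Proposition \ref{prop:BBW}. In type $A_{n-1}$ we have $h_G=n$ and $(\alpha,\alpha)=2$ for every $\alpha\in\Delta$, so $\ell_\alpha$ equals the common multiplicative order $\ell$ of $\zeta^2$. The general Kostant-type inequality recalled in Section~5 gives $|\langle\lambda_X+\rho,\alpha^\vee\rangle|\leqq h_G-1=n-1$, and the standing assumption $\ell\geqq n$ yields $n-1<\ell=\ell_\alpha$. Hence Proposition \ref{prop:BBW} applies (and in the transcendental case Proposition \ref{prop:BBW0} applies without any such check).

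Second, I identify combinatorially which $X$ in our restricted family arise as inversion sets. Writing $W=\GS_n$, the inversion set $\Delta^+\cap(-w^{-1}\Delta^+)=\{\alpha_{ij}\mid i<j,\;w(i)>w(j)\}$ consists entirely of roots of the form $\alpha_{1,j}$ exactly when $w(2)<w(3)<\cdots<w(n)$. Such a $w$ is determined by $k:=w(1)$, and a direct check shows $w(i)=i-1$ for $2\leqq i\leqq k$ and $w(i)=i$ for $i>k$, so the inversion set is $\{\alpha_{1,2},\alpha_{1,3},\dots,\alpha_{1,k}\}$, of length $k-1$. Thus $X$ is an inversion set of some $w\in W$ if and only if $\{j_1,\dots,j_a\}=\{2,\dots,a+1\}$, and in that case the corresponding $w$ has length exactly $a$.

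Finally, I combine these with Lemma \ref{lem:Kos}. If $X\neq\{\alpha_{1,2},\dots,\alpha_{1,a+1}\}$, then $W\circ\lambda_X\cap\Lambda^+=\emptyset$; since the $W$-orbit of any regular element meets the interior of the dominant chamber, $\lambda_X+\rho$ must be singular, whence Proposition \ref{prop:BBW}(i) (or \ref{prop:BBW0}(i)) gives $R^p\Ind(K_{\lambda_X})=0$ for every $p$. In the remaining case $X=\{\alpha_{1,2},\dots,\alpha_{1,a+1}\}$, the element $w$ above satisfies $\ell(w)=a$ and $w\circ\lambda_X=0\in\Lambda^+$, so Proposition \ref{prop:BBW}(ii) (or \ref{prop:BBW0}(ii)) gives $R^p\Ind(K_{\lambda_X})\cong V^*_{K,\zeta}(0)=K$ for $p=a$ and zero otherwise, as required.

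The only step where genuine care is needed is the weight bound, as this is exactly where the hypothesis $\ell\geqq n$ enters; the identification of the relevant inversion sets and the invocation of Kostant's lemma are then purely formal. There is no subtlety hidden in the derivation itself, so the proof is short once these three ingredients are lined up.
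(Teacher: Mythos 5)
Your proof is correct and follows the same strategy as the paper: reduce via Propositions \ref{prop:BBW0}/\ref{prop:BBW} and Lemma \ref{lem:Kos} to identifying which sets $X=\{\alpha_{1j_1},\dots,\alpha_{1j_a}\}$ arise as inversion sets $\Delta^+\cap(-w^{-1}\Delta^+)$. The only difference is in the middle combinatorial step: the paper observes that such a $w$ satisfies $\ell(ws_j)>\ell(w)$ for all $j\in J$ and hence is a minimal coset representative $s_t\cdots s_1$ for $W/W_J$, whereas you compute the relevant permutations directly from the condition $w(2)<\cdots<w(n)$; these are two phrasings of the same fact, and your version is somewhat more explicit. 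You also usefully spell out two points the paper leaves implicit, namely the verification that $\ell\geqq n$ gives the weight bound $|\langle\lambda_X+\rho,\alpha^\vee\rangle|\leqq n-1<\ell_\alpha$ needed for Proposition \ref{prop:BBW}, and the deduction that $W\circ\lambda_X\cap\Lambda^+=\emptyset$ forces $\lambda_X+\rho$ to be singular.
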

\begin{proof}
By Proposition \ref{prop:BBW0}, 
Proposition \ref{prop:BBW} and  
Lemma \ref{lem:Kos}
we have only to show
\[
\exists
w\in W
\;\;\text{s.t.}\;\; 
\{\alpha_{1j_1},\dots,\alpha_{1j_a}\}
=\Delta^+\cap(-w^{-1}\Delta^+)
\;\Longleftrightarrow\;
j_r=r+1\;\;(r=1,\dots,a).
\]
Assume $\{\alpha_{1j_1},\dots,\alpha_{1j_a}\}
=\Delta^+\cap(-w^{-1}\Delta^+)$ for $w\in W$.
Then we have $w(\alpha_j)\in\Delta^+$ for any $j\in J$.
This implies $\ell(ws_j)>\ell(w)$ for any $j\in J$.
Hence by $W=\GS_n$ and $W_J\cong\GS_{n-1}$ there exists
some $0\leqq t\leqq n-1$ satisfying
$w=s_{t}\dots s_2s_{1}$.
Then we have
$\Delta^+\cap(-w^{-1}\Delta^+)
=\{\alpha_{12},\alpha_{13},\dots, \alpha_{1,t+1}\}$, and hence 
we obtain $t=a$ and 
$j_r=r+1$ for $r=1,\dots,a$.
On the other hand
we have
$
\Delta^+\cap(-w^{-1}\Delta^+)
=\{\alpha_{12},\dots,\alpha_{1,a+1}\}
$ 
for 
$w=s_{a}\cdots s_{1}$.
\end{proof}
By Lemma \ref{lem:step} and \eqref{eq:triangle} we conclude that 
\[
H^p(R\Ind(M_a))=0\qquad(p\ne0).
\]
Moreover, we have an exact sequence
\[
0\to 
H^0(R\Ind(M_{a-1}))
\to
H^0(R\Ind(M_a))
\to
R^a\Ind(O_{K,\zeta}(G)\Breve{\otimes}(\wedge_\zeta^a V_J)_\ad)
\to0.
\]
In particular, we obtain 
\[
R^p\Ind(O_{K,\zeta}(P_J)_\ad)
\cong
H^p(R\Ind(O_{K,\zeta}(G)\Breve{\otimes}(\wedge_\zeta^\bullet V_J)_\ad))
=H^p(R\Ind(M_{n-1}))
=0
\]
for $p\ne0$.
It remains to show that \eqref{eq:oUpsilon} is an isomorphism.

Set
\[
F_a=\Image(H^0(R\Ind(M_a))\to \Ind(O_{K,\zeta}(P_J)_\ad)).
\]
By the above argument we have
a sequence
\[
\{0\}=F_{-1}\subset
F_0\subset
\dots
\subset
F_{n-1}=\Ind(O_{K,\zeta}(P_J)_\ad)
\]
of submodules of $\Ind(O_{K,\zeta}(P_J)_\ad)$ such that $F_a/F_{a-1}\cong R^a\Ind(O_{K,\zeta}(G)\Breve{\otimes}(\wedge_\zeta^a V_J)_\ad)$.
\subsection{}

The following result is well-known and follows from a more general result in \cite{St}.
We present here a direct proof for the sake of the readers.
\begin{lemma}
The $O_{K,\zeta}(H)^{W\circ}$-module
$O_{K,\zeta}(H)^{W_J\circ}$ has a free basis
\[
\{\chi_{a\epsilon_1}\mid a=0,\dots, n-1\}.
\]
\end{lemma}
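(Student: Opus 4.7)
The plan is to (i) trivialize the twisted $W$-action by an algebra automorphism, (ii) reduce to a $GL_n$-type sublattice of $\Lambda$, and (iii) finish by an explicit Newton-identity calculation.

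For (i), in type $A_{n-1}$ we have $2\rho^\vee = \sum_{k=1}^{n-1}k(n-k)\alpha_k^\vee \in Q^\vee \subset \Lambda^\vee$, and $d_i = 1$ gives $\jmath(\alpha) = \alpha^\vee$ for $\alpha \in Q$, whence $2\langle\rho, \jmath(w\lambda - \lambda)\rangle = \langle 2\rho^\vee, w\lambda - \lambda\rangle$ for all $w \in W$, $\lambda \in \Lambda$. Consequently the algebra automorphism $\theta(\chi_\lambda) := \zeta^{-\langle 2\rho^\vee, \lambda\rangle}\chi_\lambda$ of $O_{K,\zeta}(H)$ satisfies $\theta(w \circ f) = w \cdot \theta(f)$, where $w \cdot \chi_\mu := \chi_{w\mu}$ is the ordinary $W$-action on $K[\Lambda]$. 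Since $\theta$ rescales each $\chi_{a\epsilon_1}$ by a unit, the problem reduces to showing that $K[\Lambda]^{W_J}$ is free over $K[\Lambda]^W$ with basis $\{\chi_{a\epsilon_1}\}_{a=0}^{n-1}$ under the untwisted action.

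For (ii), set $\Lambda_0 := \BZ\epsilon_1 + \cdots + \BZ\epsilon_n \subset \Lambda$, a $W$-stable sublattice, and $\Lambda' := \ker(\Lambda \twoheadrightarrow \Hom_\BZ(Q^\vee, \BZ))$, on which $W$ acts trivially (since $s_i\lambda = \lambda$ when $\langle\lambda, \alpha_i^\vee\rangle = 0$). Because $\epsilon_1$ generates the fundamental group $\Hom_\BZ(Q^\vee, \BZ)/Q \cong \BZ/n\BZ$ of type $A_{n-1}$, $\Lambda_0$ also surjects onto $\Hom_\BZ(Q^\vee, \BZ)$, so by the surjectivity hypothesis $\Lambda_0 + \Lambda' = \Lambda$ with intersection $\Lambda_0 \cap \Lambda' = \BZ(\epsilon_1 + \cdots + \epsilon_n)$ (by a direct computation using $\langle\sum c_r\epsilon_r, \alpha_i^\vee\rangle = c_i - c_{i+1}$). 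Setting $R := K[\Lambda_0 \cap \Lambda']$, the pushout identity $K[\Lambda] \cong K[\Lambda_0] \otimes_R K[\Lambda']$ and the triviality of $W$ on $K[\Lambda']$ give $K[\Lambda]^G \cong K[\Lambda_0]^G \otimes_R K[\Lambda']$ for $G \in \{W, W_J\}$. Moreover $K[\Lambda']$ is free over $R$ (with basis indexed by coset representatives of $\BZ(\epsilon_1 + \cdots + \epsilon_n)$ in $\Lambda'$), so the base change $(-)\otimes_R K[\Lambda']$ preserves free module structures over $K[\Lambda_0]^W$, and it suffices to prove the claim with $\Lambda$ replaced by $\Lambda_0$.

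For (iii), writing $x_r := \chi_{\epsilon_r}$ in $K[\Lambda_0]$, the polynomial $P(X) := \prod_{r=1}^n(X - x_r) \in K[\Lambda_0]^W[X]$ is monic of degree $n$ with $x_1$ as a root and unit constant term $(-1)^n x_1 \cdots x_n$; hence $K[\Lambda_0]^W[x_1^{\pm 1}] = \sum_{a=0}^{n-1}K[\Lambda_0]^W x_1^a$. Linear independence: if $\sum_a p_a x_1^a = 0$ with $p_a \in K[\Lambda_0]^W$, applying $\sigma \in W$ with $\sigma(1) = r$ yields $\sum_a p_a x_r^a = 0$ for each $r$; the polynomial $\sum_a p_a Y^a$ of degree $<n$ then has the $n$ distinct roots $x_1, \ldots, x_n$ in the domain $K[\Lambda_0]$, forcing $p_a = 0$. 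For the reverse inclusion $K[\Lambda_0]^{W_J} \subset K[\Lambda_0]^W[x_1^{\pm 1}]$, Euclidean division of $P(X)$ by $X - x_1$ gives $e_k(x_2, \ldots, x_n) = \sum_{j=0}^k(-x_1)^j e_{k-j}(x_1, \ldots, x_n) \in K[\Lambda_0]^W[x_1]$, together with $e_n(x_2, \ldots, x_n) = x_1^{-1}e_n(x_1, \ldots, x_n) \in K[\Lambda_0]^W[x_1^{\pm 1}]$, placing every $W_J$-invariant in $K[\Lambda_0]^W[x_1^{\pm 1}]$.

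The main technical obstacle is step (ii): the non-direct decomposition $\Lambda = \Lambda_0 + \Lambda'$ (with generally nontrivial intersection $\BZ(\epsilon_1 + \cdots + \epsilon_n)$) forces one to work over the base ring $R$, and the freeness of $K[\Lambda']$ over $R$ must be verified so that the base change preserves the free-module structure of $K[\Lambda_0]^{W_J}$ over $K[\Lambda_0]^W$. The other two steps are either formal (the twist) or standard Newton-identity combinatorics.
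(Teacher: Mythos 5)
Your proof is correct, and it takes a genuinely different route from the paper's in two places. First, you make explicit the reduction from the twisted $W$-action to the ordinary one via the algebra automorphism $\theta(\chi_\lambda)=\zeta^{-\langle 2\rho^\vee,\lambda\rangle}\chi_\lambda$, whereas the paper simply declares without comment that it suffices to treat the untwisted action; your verification that $\theta$ intertwines the two actions (using $d_i=1$ and $\jmath(\alpha)=\alpha^\vee$) is a worthwhile addition. Second, the paper splits $\Lambda$ as a \emph{direct} sum $\bigoplus_i\BZ\varpi_i\oplus\Ker(\Lambda\to\Hom_\BZ(Q^\vee,\BZ))$, thereby reducing to the $SL_n$ lattice, and then asserts (somewhat tersely) the further equivalence with $GL_n$ before running its computation there; you instead take $\Lambda_0=\sum_r\BZ\epsilon_r$, note that $\Lambda=\Lambda_0+\Lambda'$ is only a pushout with overlap $\BZ(\epsilon_1+\cdots+\epsilon_n)$, and invoke $K[\Lambda]\cong K[\Lambda_0]\otimes_R K[\Lambda']$ with $K[\Lambda']$ free (hence flat) over $R$, so invariants pass through the base change. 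This lands you directly in $GL_n$ and avoids the $SL_n\!\to\!GL_n$ step the paper glosses over; the price is the (correctly handled) care about the non-direct sum. In the $GL_n$ computation itself, your linear-independence argument (a polynomial of degree $<n$ over the Laurent domain with $n$ distinct roots $x_1,\dots,x_n$ must vanish) is cleaner than the paper's inductive substitution-and-divisibility argument and is essentially Steinberg's original reasoning in \cite{St}. Two trivial slips: you write $e_n(x_2,\dots,x_n)$ where $e_{n-1}(x_2,\dots,x_n)=x_2\cdots x_n$ is meant, and the surjectivity of $\Lambda_0\to\Hom_\BZ(Q^\vee,\BZ)$ is what gives $\Lambda_0+\Lambda'=\Lambda$, not the surjectivity hypothesis on $\Lambda$ per se (though that hypothesis is what guarantees $\epsilon_1$ exists). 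Neither affects the argument.
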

\begin{proof}
It is sufficient to show that 
$K[\Lambda]^{W_J}$ is a free $K[\Lambda]^W$-module with basis 
$\{e(a\epsilon_1)\mid a=0,\dots, n-1\}$.
Set $\Lambda_0=\Ker(\Lambda\to\Hom_\BZ(Q^\vee,\BZ))$, and 
\[
\varpi_i=\epsilon_1+\dots+\epsilon_i
\quad(i\in I),
\qquad
\varpi'_j=\epsilon_2+\dots+\epsilon_j
\quad(j\in J),
\]
so that 
$\langle\varpi_{i_1},\alpha^\vee_{i_2}\rangle=\delta_{i_1i_2}$ for $i_1, i_2\in I$, and
$\langle\varpi'_{j_1},\alpha^\vee_{j_2}\rangle=\delta_{j_1j_2}$ for $j_1, j_2\in J$.
Then any $\lambda\in\Lambda^+$ is uniquely written in the form
\[
\lambda=
\sum_{i\in I}n_i\varpi_i+\mu
\qquad(n_i\in\BZ_{\geqq0},\; \mu\in\Lambda_0).
\]
Similarly any $\lambda\in\Lambda_J^+=\{
\lambda\in\Lambda\mid
\langle\lambda,\alpha_j^\vee\rangle\geqq0
\;(j\in J)\}$ 
 is uniquely written in the form
\[
\lambda=
m\epsilon_1+
\sum_{j\in J}n_j\varpi'_j+\mu
\qquad(m\in\BZ, n_j\in\BZ_{\geqq0},\; \mu\in\Lambda_0).
\]
Hence we see easily that 
\begin{align*}
K[\Lambda]^W
=&\BZ[\sigma_1,\dots,\sigma_{n-1}]\otimes\BZ[\Lambda_0],
\\
K[\Lambda]^{W_J}
=&
\BZ[e(\epsilon_1)^{\pm1}]
\otimes\BZ[\sigma'_2,\dots,\sigma'_{n-1}]
\otimes\BZ[\Lambda_0],
\end{align*}
where
\[
\sigma_i=\sum_{\lambda\in W(\varpi_i)}e(\lambda)
\quad(i\in I),
\qquad
\sigma'_j=\sum_{\lambda\in W_J(\varpi'_{j})}e(\lambda)
\quad(j\in J).
\]
Therefore our assertion for general $G$ is equivalent to that for $G=SL_n(\BC)$, and is also equivalent to that for $G=GL_n(\BC)$.
We assume that $G=GL_n(\BC)$ in the following.

Set $z_r=e(\epsilon_r)$ for $r=1,\dots, n$ so that $K[\Lambda]=K[z_1^{\pm1},\dots, z_n^{\pm1}]$.
Note that $\sigma_i$ for $i=1,\dots, n-1$ 
(resp.\
$\sigma'_j$ for $j=2,\dots, n-1$)
is the elementary symmetric polynomial of degree $i$
(resp\ $j-1$) in the variables $z_1,\dots, z_n$
(resp.\ $z_2,\dots, z_n$).
Setting $\sigma_n=z_1\dots z_n$ and 
$\sigma'_n=z_2\dots z_n$ we have
\[
K[\Lambda]^W=
K[\sigma_1,\dots, \sigma_{n-1},\sigma_{n}^{\pm1}],
\qquad
K[\Lambda]^{W_J}=
K[z_1^{\pm1}, \sigma'_2,\dots, \sigma'_{n-1},(\sigma'_{n})^{\pm1}].
\]
Let us show that 
$K[\Lambda]^{W_J}$ is a free $K[\Lambda]^{W}$-module with basis 
$\{z_1^a\mid a=0,\dots, n-1\}$.
By 
\begin{gather*}
\sigma'_2=\sigma_1-z_1,\quad
\sigma'_j=\sigma_{j-1}-z_1\sigma'_{j-1}
\;(j\geqq3), 
\quad
\sigma'_n=z_1^{-1}\sigma_n,
\quad
z_1^n+\sum_{k=1}^n(-1)^k\sigma_kz_1^{n-k}=0
\end{gather*}
we see easily that 
$K[\Lambda]^{W_J}=
\sum_{a=0}^{n-1}
K[\Lambda]^{W}z_1^a$.
Hence it is sufficient to show 
\begin{equation}
\label{eq:invariant}
\sum_{a=0}^{n-1}f_az_1^a=0\quad
(f_a\in K[\Lambda]^W)\;
\Rightarrow
f_0=\dots= f_{n-1}=0.
\end{equation}
By multiplying a suitable power of $\sigma_n$ we may assume $f_a\in K[z_1,\dots, z_n]^{W}$. 
Let us show \eqref{eq:invariant} for 
$f_a\in K[z_1,\dots, z_n]^{W}$ by induction on $n$.
Assume 
$\sum_{a=0}^{n-1}f_az_1^a=0$ for 
$f_a\in K[z_1,\dots, z_n]^{W}$. 
By $\sum_{a=0}^{n-1}f_az_2^a=0$ we have 
\[
\sum_{a=1}^{n-1}f_a\frac{z_1^a-z_2^a}{z_1-z_2}=0.
\]
Substituting $z_1=0$ we obtain
$
\sum_{a=1}^{n-1}\overline{f}_az_2^{a-1}=0$,
where $\overline{f}_a=
f_a(0,z_2,\dots, z_n)\in K[z_2,\dots, z_n]^{W_J}$.
By the hypothesis of induction we have $\overline{f}_a=0$.
Namely, $f_a$ is divisible by $z_1$.
Since $f_a$ is $W$-invariant, we have
$f_a=\sigma_ng_a$ for $g_a\in K[z_1,\dots, z_n]^{W}$. 
Then 
$\sum_{a=0}^{n-1}g_az_1^a=0$
Repeating this we conclude that $f_a$ is divisible by any power of $\sigma_n$.
Hence we have $f_a=0$ for any $a$.
We are done.
\end{proof}

For $a=0,\dots, n-1$ we define 
\begin{equation}
\upsilon_a:O_{K,\zeta}(G)_\ad
\otimes K\chi_{a\epsilon_1}
\to
\Ind(O_{K,\zeta}(P_J)_\ad)
\end{equation}
to be the restriction of $\overline{\Upsilon}^{IJ}$ in \eqref{eq:oUpsilon} to 
$O_{K,\zeta}(G)_\ad
\otimes K\chi_{a\epsilon_n}$.
By $\iota_J(\chi_{\epsilon_1})=\xi_{11}$ we have
\[
(\upsilon_a(\varphi\otimes\chi_{a\epsilon_1}))(u)
=
\overline{(\ad(u)(\varphi))\xi_{11}^a}
\qquad(\varphi\in O_{K,\zeta}(G), u\in U_{K,\zeta}(\Gg)).
\]
In order to verify that $\overline{\Upsilon}^{I,J}$ is bijective it is sufficient to show that the image of $\upsilon_a$ is contained in $F_a$ and that the composite of 
\[
O_{K,\zeta}(G)_\ad
\otimes K\chi_{a\epsilon_n}
\xrightarrow{\upsilon_a}
F_a\to F_a/F_{a-1}
\]
is an isomorphism.

Denote by 
\begin{equation}
\sigma_a:
O_{K,\zeta}(G)_\ad
\otimes K\overline{\xi_{11}^a}
\to O_{K,\zeta}(P_J)_\ad
\end{equation}
the restriction of 
\[
O_{K,\zeta}(G)_\ad\otimes\overline{R}_\ad\to
O_{K,\zeta}(P_J)_\ad
\qquad
(\varphi\otimes\overline{f}\mapsto\overline{\varphi f}).
\]
Since $\overline{R}_\ad$ is a 
trivial $U_{K,\zeta}(\Gp_J)$-module (with respect to the adjoint action), 
$\Ind(K\overline{\xi_{11}^a})$ is a one-dimensional trivial $U_{K,\zeta}(\Gg)$-module.
We denote by $1_a$ a $K$-basis of  
$\Ind(K\overline{\xi_{11}^a})$.
Then the composite of
\[
O_{K,\zeta}(G)_\ad
\otimes \Ind(K\overline{\xi_{11}^a})
\cong
\Ind(
O_{K,\zeta}(G)_\ad
\otimes K\overline{\xi_{11}^a})
\xrightarrow{\Ind(\sigma_a)}
\Ind(O_{K,\zeta}(P_J)_\ad)
\]
sends $\varphi\otimes 1_a$ to 
\[
[u\mapsto\overline{((\ad(u)(\varphi))\xi_{11}^a}]\in \Ind(O_{K,\zeta}(P_J)_\ad).
\]
Therefore, in order to verify 
that $\overline{\Upsilon}^{I,J}$ is bijective it is sufficient to show that the image of $\Ind(\sigma_a)$ is contained in $F_a$ and that the composite of 
\[
\Ind(
O_{K,\zeta}(G)_\ad
\otimes K\overline{\xi_{11}^a})
\xrightarrow{\Ind(\sigma_a)}
F_a\to F_a/F_{a-1}
\]
is an isomorphism.

Considering the degrees in the resolution of 
$\overline{R}_\ad$ given in Proposition \ref{prop:resolution0} we obtain a  resolution 
\[
R^0\Breve{\otimes}
(\wedge_\zeta^a V_J)_\ad
\to
R^1\Breve{\otimes}
(\wedge_\zeta^{a-1} V_J)_\ad
\to
\cdots
\to
R^a\Breve{\otimes}
(\wedge_\zeta^{0} V_J)_\ad
\]
of $K\overline{\xi_{11}^a}$.
Tensoring with $O_{K,\zeta}(G)$
we obtain a resolution
\[
N_a=[O_{K,\zeta}(G)
\Breve{\otimes}
(R^0\Breve{\otimes}\wedge_\zeta^a V_J)
\to
\cdots
\to
O_{K,\zeta}(G)\Breve{\otimes}
(R^a\Breve{\otimes}\wedge_\zeta^{0} V_J)]
\]
of $O_{K,\zeta}(G)_\ad\otimes K\overline{\xi_{11}^a}
=O_{K,\zeta}(G)\Breve{\otimes} K\overline{\xi_{11}^a}$.
We have a commutative diagram 
\[
\xymatrix@C=30pt{
N_a
\ar[d]_{\tilde{\sigma}_a}
\ar[r]
&
O_{K,\zeta}(G)_\ad\otimes K\overline{\xi_{11}^a}
\ar[d]^{\sigma_a}
\\
O_{K,\zeta}(G)
\Breve{\otimes}
(\wedge_\zeta^{\bullet} V_J)_\ad
\ar[r]
&
O_{K,\zeta}(P_J)_\ad,
}
\]
where $\tilde{\sigma}_a$ is the morphism of complexes given by
\[
O_{K,\zeta}(G)\Breve{\otimes}
(R^{a-k}\Breve{\otimes}\wedge_\zeta^{k} V_J)
\to
O_{K,\zeta}(G)
\Breve{\otimes}\wedge_\zeta^{k} V_J
\qquad
(\varphi\otimes f\otimes z\mapsto
\varphi f\otimes z).
\]
Since $\tilde{\sigma}_a$ is the composite of 
$N_a\to M_a\to O_{K,\zeta}(G)
\Breve{\otimes}
(\wedge_\zeta^{\bullet} V_J)_\ad$, 
we conclude that 
the image of $\Ind(\sigma_a)$ is contained in $F_a$.
In order to show that 
the composite of 
\[
\Ind(
O_{K,\zeta}(G)_\ad
\otimes K\overline{\xi_{11}^a})
\xrightarrow{\Ind(\sigma_a)}
F_a\to F_a/F_{a-1}
\]
is an isomorphism 
it is sufficient to verify that 
the natural morphism
\[
N_a\to 
O_{K,\zeta}(G)\Breve{\otimes}
(\wedge_\zeta^{a} V_J)_\ad[a]
\]
of complexes induces an isomorphism
\[
R\Ind(N_a)\cong
R\Ind(O_{K,\zeta}(G)\Breve{\otimes}\wedge_\zeta^{a} V_J)[a].
\]
By the definition of $N_a$ we have only  to show
\begin{equation}
\label{eq:F}
R\Ind(O_{K,\zeta}(G)
\Breve{\otimes} (R^{a-k}\Breve{\otimes} 
(\wedge_\zeta^{k} V_J)_\ad))=0
\qquad(k=0,\dots, a-1).
\end{equation}
By Lemma \ref{lem:R} we have
\[
R^{a-k}\Breve{\otimes} 
(\wedge_\zeta^{k} V_J)_\ad
\cong
S^{a-k}_\zeta V\otimes
(\wedge_\zeta^{k} V_J)_\ad
\otimes K_{-(a-k)\epsilon_1},
\]
and hence
\begin{align*}
&R\Ind(O_{K,\zeta}(G)\Breve{\otimes}
(R^{a-k}\Breve{\otimes}
(\wedge_\zeta^{k} V_J)_\ad))
\\
\cong&
O_{K,\zeta}(G)
\Breve{\otimes} 
(S_\zeta^{a-k}V\otimes
R\Ind(
(\wedge_\zeta^{k} V_J)_\ad
\otimes
K_{-(a-k)\epsilon_1}))
\end{align*}
by Lemma \ref{lem:TI} and Lemma \ref{lem:TTI}.
Therefore, \eqref{eq:F} follows from the following.
\begin{lemma}
We have
\[
R\Ind(
(\wedge_\zeta^{k} V_J)_\ad
\otimes
K_{-(a-k)\epsilon_1})=0
\]
for $0\leqq k<a<n$.
\end{lemma}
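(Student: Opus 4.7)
The plan is to reduce the claim, by a weight-filtration argument, to a single application of the Borel-Weil-Bott type vanishing (Proposition \ref{prop:BBW0} or Proposition \ref{prop:BBW}) applied to each one-dimensional weight space of $(\wedge_\zeta^k V_J)_\ad$. First I would verify that $\xi_{1r}$ is a weight vector of weight $\epsilon_r - \epsilon_1 = -\alpha_{1r}$ for the adjoint action (this follows from $v_1^*$ having weight $-\epsilon_1$ and $v_r$ having weight $\epsilon_r$), so that $(\wedge_\zeta^k V_J)_\ad$ decomposes as a $U_{K,\zeta}(\Gh)$-module into one-dimensional weight spaces $K\,\xi_{1j_1} \wedge \cdots \wedge \xi_{1j_k}$ indexed by $k$-subsets $J_1 = \{j_1 < \cdots < j_k\}$ of $\{2,\dots,n\}$. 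Since $V_J$ is $\ad(U_{K,\zeta}(\Gp_J))$-stable (a direct check shows that both the left and right actions of $U_{K,\zeta}(\Gp_J)$ on $O_{K,\zeta}(G)$ preserve $V_J = \sum_{r\geq 2} K\xi_{1r}$), and since $\ad(f_i)$ strictly decreases the weight with respect to the partial order $\leq_Q$ on $\Lambda$, any linear extension of $\leq_Q$ yields a filtration of the $U_{K,\zeta}(\Gb)$-module $(\wedge_\zeta^k V_J)_\ad \otimes K_{-(a-k)\epsilon_1}$ by submodules whose subquotients are one-dimensional modules $K_{\lambda(J_1)}$, with
\[
\lambda(J_1) = -(\alpha_{1j_1} + \cdots + \alpha_{1j_k}) - (a-k)\epsilon_1 = -a\epsilon_1 + \sum_{r \in J_1}\epsilon_r.
\]
By the long exact sequence for $R\Ind$ it then suffices to show $R^p\Ind(K_{\lambda(J_1)}) = 0$ for every $p$ and every such $J_1$.

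For this, I would take $\rho = \sum_{r=1}^n (n-r)\epsilon_r$ (which satisfies $\langle\rho,\alpha_i^\vee\rangle = 1$) and write $\lambda(J_1) + \rho = \sum_{r=1}^n c_r \epsilon_r$ with
\[
c_1 = n-1-a, \qquad c_r = (n-r) + \delta_{r \in J_1} \quad (r \geq 2),
\]
so that $\langle \lambda(J_1)+\rho,\alpha_{rs}^\vee\rangle = c_r - c_s$ for $r < s$. The combinatorial heart of the argument is to show that under the hypothesis $0 \leq k < a < n$ there always exist $r \neq s$ with $c_r = c_s$. A collision $c_r = c_{r+1}$ with $r\geq 2$ occurs unless $J_1$ is the initial segment $\{2,3,\dots,k+1\}$; in that single exceptional case the multiset $\{c_r : r \geq 2\}$ equals $\{0,1,\dots,n-1\}\setminus\{n-k-1\}$, so $c_1 = n-1-a$ collides with some $c_r$, $r \geq 2$, unless $a = k$. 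Since the hypothesis $k < a$ rules out $a = k$, we conclude that $\lambda(J_1)+\rho$ is always singular.

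Finally I would verify the chamber condition required by Proposition \ref{prop:BBW}: in type $A$ one has $(\alpha,\alpha) = 2$ and hence $\ell_\alpha = \ell$ for every $\alpha \in \Delta$; since every $c_r$ lies in $\{0,1,\dots,n-1\}$, the inequality $|\langle \lambda(J_1)+\rho,\alpha^\vee\rangle| \leq n - 1 < n \leq \ell$ holds for every $\alpha \in \Delta$. (When $\zeta$ is transcendental no such bound is needed and one uses Proposition \ref{prop:BBW0} instead.) In either case, part (i) of the relevant Borel-Weil-Bott statement applies to the singular weight $\lambda(J_1)+\rho$ and gives the vanishing $R^p\Ind(K_{\lambda(J_1)}) = 0$ for all $p$, completing the proof. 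The only real obstacle is the combinatorial claim about coefficient collisions; once it is in place the conclusion is an immediate consequence of the induction hypothesis $\ell \geq n$ keeping us inside the range of validity of Proposition \ref{prop:BBW}.
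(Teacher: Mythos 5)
Your proof is correct and follows essentially the same route as the paper: reduce via a weight filtration to one-dimensional modules $K_{\lambda(J_1)}$, show that $\lambda(J_1)+\rho$ is always singular under the hypothesis $0\leqq k<a<n$, and conclude by Proposition \ref{prop:BBW0} or Proposition \ref{prop:BBW}(i). The only cosmetic difference is that the paper quotes the proof of Lemma \ref{lem:step2} to force $j_s=s+1$ and then exhibits the explicit collision $\langle\lambda+\rho,\alpha_{1,a+1}^\vee\rangle=0$, whereas you redo the same combinatorics in the coordinates $c_r$ and finish by pigeonhole; your version has the small advantage of making the required bound $|\langle\lambda+\rho,\alpha^\vee\rangle|\leqq n-1<\ell$ for Proposition \ref{prop:BBW} completely explicit.
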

\begin{proof}
Note that the weights of $
(\wedge_\zeta^{k} V_J)_\ad
\otimes
K_{-(a-k)\epsilon_1}$ is of the form
\[
\lambda=-(a-k)\epsilon_1
-\alpha_{1j_1}-\cdots-\alpha_{1j_k}
\qquad
(2\leqq j_1<j_2<\dots<j_k\leqq n).
\]
Hence it is sufficient to show $R\Ind(K_\lambda)=0$ for $\lambda$ as above.
By 
Proposition \ref{prop:BBW0}  and 
Proposition \ref{prop:BBW}
we have only to show $\langle\lambda+\rho,\alpha^\vee\rangle=0$ for some $\alpha\in\Delta$.
Assume 
$\langle\lambda+\rho,\alpha^\vee\rangle\ne0$ for any $\alpha\in\Delta$.
By 
$\langle\lambda+\rho,\alpha^\vee\rangle\ne0$ for any $\alpha\in\Delta_J$ we have
 $j_s=s+1$ for $s=1,\dots, k$ 
 by the proof of Lemma \ref{lem:step2}.
But then we have 
$\langle\lambda+\rho,\alpha_{1,a+1}^\vee\rangle=0$.
This is a contradiction.
\end{proof}
The proof of Theorem \ref{thm:main} is now complete.

\bibliographystyle{unsrt}

\begin{thebibliography}{99}

\bibitem{A}
Andersen, H. H.: 
The strong linkage principle for quantum groups at roots of 1. 
J. Algebra {\textbf{260}} (2003), no. 1, 2--15. 


\bibitem{AJ}
Andersen, H. H., 
Jantzen, J. C.:
Cohomology of induced representations for algebraic groups. 
Math. Ann. \textbf{269} (1984), no. 4, 487--525. 

\bibitem{APW}
Andersen, H. H., 
Polo, P., 
Wen, K.: 
Representations of quantum algebras. 
Invent. Math. {\textbf{104}} (1991), no. 1, 1--59.


\bibitem{BMR1}
Bezrukavnikov, R., 
Mirkovi\'{c}, I.,
Rumynin, D.: 
Localization of modules for a semisimple Lie algebra in prime characteristic.  With an appendix by Bezrukavnikov and Simon Riche. 
Ann. of Math. (2) \textbf{167} (2008), no. 3, 945--991.

\bibitem{BMR2}
Bezrukavnikov, R., 
Mirkovi\'{c}, I.,
Rumynin, D.: 
Singular localization and intertwining functors for reductive Lie algebras in prime characteristic. Nagoya Math. J. \textbf{184}(2006), 1--55.

\bibitem{BM}
Bezrukavnikov, R., 
Mirkovi\'{c}, I.: 
Representations of semisimple Lie algebras in prime characteristic and the noncommutative Springer resolution, With an appendix by Eric Sommers.
Ann. of Math. (2) \textbf{178} (2013), no. 3, 835--919.


\bibitem{HH}
Hashimoto, M., 
Hayashi, T.: 
Quantum multilinear algebra. 
Tohoku Math. J. (2) \textbf{44} (1992), no. 4, 471--521.

\bibitem{Jan}
Jantzen, J.: 
Lectures on quantum groups, 
Graduate Studies in Math., \textbf{6}, 
Amer. Math. Soc., Providence, RI, 1996.

\bibitem{J}
 Joseph, A.:
 Quantum groups and their primitive ideals. 
 Ergebnisse der Mathematik und ihrer Grenzgebiete (3) {\bf29}. Springer-Verlag, Berlin, 1995. x+383 pp.

\bibitem{KMT}
 Kamita, A., 
 Morita, Y., 
 Tanisaki, T.:
 Quantum deformations of certain prehomogeneous vector spaces. I. 
 Hiroshima Math. J. \textbf{28}
  (1998), no. 3, 527--540.

\bibitem{KR}
Kirillov, A. N., 
N. Reshetikhin, N.: 
$q$-Weyl group and a multiplicative formula for universal $R$-matrices.
Comm. Math. Phys. {\textbf{134}} (1990), no. 2, 421--431.


\bibitem{Kos}
Kostant, B.:
Lie algebra cohomology and the generalized Borel-Weil theorem. Ann. of Math. (2) {\bf74} (1961), 329--387. 

\bibitem{LS}
Levendorskii, S. Z., 
Soibelman,  Ya. S.:
Some applications of the quantum Weyl groups.
J. Geom. Phys. {\textbf{7}} (1990), no. 2, 241--254.


\bibitem{L0}
Lusztig, G.: 
Finite-dimensional Hopf algebras arising from quantized universal enveloping algebra. 
J. Amer. Math. Soc. \textbf{3} (1990), no. 1, 257--296.



\bibitem{Lb}
Lusztig, G.: 
Introduction to quantum groups. 
Progr. Math., \textbf{110}, Boston etc. Birk\"hauser, 1993.

\bibitem{LK}
Lusztig, G.: 
Bases in equivariant K-theory II. 
Represent.Th. \textbf{3} (1999), 281--353.


\bibitem{M2}
 Manin, Yu. I.: 
 Some remarks on Koszul algebras and quantum groups. Ann. Inst. Fourier (Grenoble)  \textbf{37} (1987), no. 4, 191--205.



\bibitem{St}
Steinberg, R.: 
On a theorem of Pittie. 
Topology \textbf{14} (1975), 173--177. 


\bibitem{TK}
Tanisaki, T.: 
Killing forms, Harish-Chandra isomorphisms, and universal $R$-matrices for quantum algebras. 
Infinite analysis, Part A, B (Kyoto, 1991), 941--961, Adv. Ser. Math. Phys., \textbf{16}, World Sci. Publ., River Edge, NJ, 1992.


\bibitem{T1}
Tanisaki, T.:
Differential operators on quantized flag manifolds at roots of unity. 
Adv. Math. \textbf{230} (2012), 2235--2294.


\bibitem{T2}
Tanisaki, T.:
Differential operators on quantized flag manifolds at roots of unity, II. Nagoya Math. J. \textbf{214} (2014), 1--52. 



\bibitem{TD}
Tanisaki, T.:
 Invariance of the Drinfeld pairing of a quantum group. Tokyo J. Math. \textbf{39} (2017), no. 3, 885--900.


\bibitem{TM}
Tanisaki, T.:
Modules over quantized coordinate algebras and PBW-bases.
J. Math. Soc. Japan \textbf{69} (2017), 1105--1156.

\bibitem{TA}
Tanisaki, T.:
Affine open covering of the quantized flag manifolds at roots of unity.
J. Algebra \textbf{586} (2021), 62--98.

\bibitem{T3}
Tanisaki, T.:
Differential operators on quantized flag manifolds at roots of unity, III. 
Adv. Math. \textbf{392} (2021), , Paper No. 107990, 51 pp.

\bibitem{T4}
Tanisaki, T.:
Quantized flag manifolds and 
non-restricted modules over quantum groups at roots of unity, 
arXiv 2109.03319.



\end{thebibliography}

\end{document}